\documentclass[11pt]{article}
\usepackage{amsfonts,amsmath,amssymb,verbatim}
\usepackage{hyperref,amsthm}
\usepackage{epsfig}
\usepackage{graphicx}

\newtheorem{theorem}{Theorem}[section]
\newtheorem{proposition}[theorem]{Proposition}
\newtheorem{lemma}[theorem]{Lemma}
\newtheorem{claim}[theorem]{Claim}
\newtheorem{corollary}[theorem]{Corollary}

\newtheorem{notation}[theorem]{Notation}

\newtheorem{conjecture}[theorem]{Conjecture}
\newtheorem{problem}[theorem]{Problem}

\newtheorem*{theorem*}{Theorem}
\theoremstyle{definition}
\newtheorem{remark}{Remark}[section]
\newtheorem*{definition}{Definition}

\oddsidemargin=0.15in \evensidemargin=0.15in \topmargin=-.5in
\textheight=9in \textwidth=6.25in

\newcommand{\beq}[1]{\begin{equation}\label{#1}}
\newcommand{\enq}[0]{\end{equation}}

\newcommand{\mn}[0]{\medskip\noindent}

\newcommand{\ud}{\textup{d}}

\newcommand{\A}[0]{{\cal A}}
\newcommand{\B}[0]{{\cal B}}
\newcommand{\F}[0]{{\cal F}}

\newcommand{\D}[0]{{\cal D}}
\newcommand{\G}[0]{{\cal G}}
\newcommand{\h}[0]{{\cal H}}
\newcommand{\I}[0]{{\cal I}}
\newcommand{\J}[0]{{\cal J}}
\newcommand{\p}[0]{{\cal P}}
\newcommand{\s}[0]{{\cal S}}

\newcommand{\e}{{\mathbb{AK}_{t,\zeta}^{n}}}
\newcommand{\emm}{{\mathbb{AK}_{t,\zeta}^{m}}}
\global\long\def\f{\mathcal{F}}

\global\long\def\pn{\mathcal{P}\left(\left[n\right]\right)}
\global\long\def\g{\mathcal{G}}
\global\long\def\s{\mathcal{S}}

\global\long\def\j{\mathcal{J}}

\global\long\def\p{\mathcal{P}}

\global\long\def\h{\mathcal{H}}

\global\long\def\a{\mathcal{A}}
\global\long\def\b{\mathcal{B}}

\global\long\def\e{\mathbb{E}}

\DeclareMathOperator{\Bin}{Bin}

\usepackage[usenames]{color}

\begin{document}

\title{Stability for the Complete Intersection Theorem, and the Forbidden Intersection Problem of Erd\H{o}s and S\'os}

\author{
David Ellis\thanks{School of Mathematical Sciences, Queen Mary, University of London, Mile End Road, London, E1 4NS, United Kingdom. E-mail: {\tt d.ellis@qmul.ac.uk}.}, \mbox{ }
Nathan Keller\thanks{Department of Mathematics, Bar Ilan University, Ramat Gan, Israel.
E-mail: {\tt nathan.keller27@gmail.com}. Research supported by the Israel Science Foundation (grant no.
402/13), the Binational US-Israel Science Foundation (grant no. 2014290), and by the Alon Fellowship.}
\mbox{ } and Noam Lifshitz\thanks{Department of Mathematics, Bar Ilan University, Ramat Gan, Israel.
E-mail: {\tt noamlifshitz@gmail.com}.}
}

\maketitle

\begin{abstract}
A family $\F$ of sets is said to be {\em $t$-intersecting} if $|A \cap B| \geq t$ for any $A,B \in \F$.
The seminal Complete Intersection Theorem of Ahlswede and Khachatrian (1997)
 gives the maximal size $f(n,k,t)$
of a $t$-intersecting family of $k$-element subsets of
$[n]=\{1,2,\ldots,n\}$, together with a characterisation
of the extremal families.

The {\em forbidden intersection problem}, posed by Erd\H{o}s and S\'{o}s in 1971, asks for a determination of the maximal
size $g(n,k,t)$ of a family $\F$ of $k$-element subsets of $[n]$ such that $|A \cap B| \neq t-1$
for any $A,B \in \F$.

In this paper, we show that for any fixed $t \in \mathbb{N}$, if
$o(n) \leq k \leq n/2-o(n)$, then $g(n,k,t)=f(n,k,t)$. In combination with prior results, this solves the problem of Erd\H{o}s and S\'{o}s for any
constant $t$, except for in the ranges $n/2-o(n) < k < n/2+t/2$ and $k < 2t$.

One key ingredient of the proof is the following sharp `stability' result for the Complete Intersection
Theorem: if $k/n$ is bounded away from $0$ and $1/2$, and $\F$ is a $t$-intersecting family of $k$-element
subsets of $[n]$ such that $|\F| \geq f(n,k,t) - O(\binom{n-d}{k})$, then there exists
a family $\G$ such that
$\G$ is extremal for the Complete Intersection Theorem, and $|\F \setminus \G| = O(\binom{n-d}{k-d})$.
This proves a conjecture of Friedgut (2008). We prove the result by combining classical `shifting' arguments
with a `bootstrapping' method based upon an isoperimetric inequality.

Another key ingredient is a Ôweak regularity lemmaÕ for families of $k$-element subsets
of $[n]$, where $k/n$ is bounded away from 0 and 1. This states that any such family $\F$
is approximately contained within a ÔjuntaÕ, such that the restriction of $\F$ to each subcube
determined by the junta is ÔpseudorandomÕ in a certain sense.
\end{abstract}

\section{Introduction}
We write $[n]:=\{1,2,\ldots,n\}$, and $\binom{[n]}{k}:=\{A \subset [n]: |A|=k\}$. If $X$ is a set, we write $\p(X)$ for the power-set of $X$. A family $\F \subset \p([n])$
(i.e., a family of subsets of $[n]$) is said to be \emph{increasing} if
$A\supset B\in\F$ implies $A\in \F$, and \emph{intersecting} if for
any $A,B \in \F$, we have $A \cap B \neq \emptyset$. For $t \in \mathbb{N}$, $\F$ is said to be \emph{t-intersecting} if
for any $A,B \in \F$, we have $|A \cap B| \geq t$.
A \emph{dictatorship} is a family
of the form $\{S| i \in S\} := \D_i$ for some $i \in [n]$, and
a \emph{$t$-umvirate} is a family of the form
$\{S| B \subset S\} =: \s_B$, for some $B \in \binom{[n]}{t}$. If $X$ is a set, we write $\textrm{Sym}(X)$ for the symmetric group on $X$. We say that two families $\mathcal{F},\mathcal{G} \subset \p([n])$ are {\em isomorphic} if there exists a permutation $\sigma \in \textrm{Sym}([n])$ such that $\mathcal{G} = \{\sigma(S):\ S \in \F\}$; in this case, we write $\F \cong \G$.


\medskip

The classical Erd\H{o}s-Ko-Rado theorem~\cite{EKR} determines the maximal size of an intersecting
family $\F \subset \binom{[n]}{k}$.
\begin{theorem}[Erd\H{o}s-Ko-Rado, 1961]\label{Thm:EKR}
Let $k < n/2$, and let $\F \subset \binom{[n]}{k}$ be an intersecting family. Then $|\F| \leq {{n-1}\choose{k-1}}$.
Equality holds if and only if $\F$ is a dictatorship.
\end{theorem}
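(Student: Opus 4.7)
The plan is to use Katona's elegant cyclic method, which reduces the Erd\H{o}s--Ko--Rado bound to a local combinatorial observation about arcs on a circle, combined with a double count.

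First, I would prove the following \emph{local} bound: for any cyclic ordering $\pi$ of $[n]$ around a circle, calling a set an \emph{arc} of $\pi$ if it consists of $k$ consecutive elements of $\pi$, any intersecting family $\A_{\pi}$ of arcs of $\pi$ satisfies $|\A_\pi|\le k$, provided $k<n/2$. Fixing any $A \in \A_\pi$, the $2k-2$ other arcs of length $k$ meeting $A$ split into $k-1$ pairs of \emph{mutually disjoint} arcs: the arc starting $j$ positions to the left of $A$ is paired with the arc starting $k-j$ positions to the right, for $1\le j\le k-1$, and the hypothesis $k<n/2$ is precisely what prevents these pairs from wrapping around to re-intersect on the opposite side of the circle. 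Since $\A_\pi$ is intersecting, it contains at most one member of each pair, giving $|\A_\pi|\le 1 + (k-1) = k$.

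Next, I would globalize by double counting. Let $\pi$ range over the $(n-1)!$ cyclic orderings of $[n]$, and set $N(\pi) := |\{A\in \F : A \text{ is an arc of } \pi\}|$. Since the arcs of $\pi$ contained in $\F$ form an intersecting family, the local bound gives $N(\pi)\le k$, whence $\sum_\pi N(\pi) \le k\cdot (n-1)!$. On the other hand, for each fixed $A\in \F$ the number of cyclic orderings in which $A$ is an arc is exactly $k!(n-k)!$, since one permutes $A$ internally among $k$ consecutive positions and $[n]\sm A$ internally among the remaining positions (the rotational freedom being quotiented out). Equating gives $|\F|\cdot k!(n-k)! \le k\cdot (n-1)!$, which rearranges to $|\F|\le \binom{n-1}{k-1}$.

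For the equality case, if $|\F| = \binom{n-1}{k-1}$ then $N(\pi)=k$ for \emph{every} cyclic ordering $\pi$. Analysing when the local bound is tight, one checks that equality $|\A_\pi|=k$ forces the $k$ arcs of $\A_\pi$ to be exactly the $k$ arcs through a common point of $\pi$; equivalently, there is a distinguished element $i_\pi\in[n]$ contained in every arc of $\A_\pi$. It remains to show consistency of $i_\pi$ as $\pi$ varies: if $\pi'$ is obtained from $\pi$ by swapping two adjacent elements, the corresponding collections $\A_\pi$ and $\A_{\pi'}$ differ in at most one element, so their common elements must coincide. Propagating this rigidity through all cyclic orderings yields a single global $i\in[n]$ with $i\in A$ for all $A\in \F$, forcing $\F = \D_i \cap \binom{[n]}{k}$, a dictatorship. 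I expect the main obstacle to be Step~1 (setting up the pairing correctly and seeing where $k<n/2$ is used); the equality analysis uses the same pairing picture, while the double count is routine once the local bound is established.
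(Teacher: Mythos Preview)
The paper does not prove Theorem~\ref{Thm:EKR}; it is stated as the classical Erd\H{o}s--Ko--Rado theorem with a citation to~\cite{EKR}, serving only as background and motivation for the subsequent results. So there is no ``paper's own proof'' to compare against.

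Your argument is Katona's cyclic permutation proof, and the inequality part is entirely correct: the pairing of the $2k-2$ arcs meeting a fixed arc $A$ into $k-1$ disjoint pairs is exactly right, the hypothesis $k<n/2$ is used precisely where you say, and the double count is clean. One small correction in the equality analysis: when you pass from a cyclic ordering $\pi$ to $\pi'$ by swapping two adjacent elements, the set of length-$k$ arcs changes in \emph{two} places (the arc ending at the swapped position and the arc beginning just after it), not one; so $\A_\pi$ and $\A_{\pi'}$ share at least $k-2$ members, not $k-1$. Since $k\ge 2$ this still leaves enough overlap to propagate the common point $i_\pi$ (the $k-2$ shared arcs pin down $i_{\pi'}=i_\pi$ when $k\ge 3$, and $k=2$ can be checked directly), but the sentence ``differ in at most one element'' should be adjusted. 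With that fix the equality argument goes through.
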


This theorem is the starting-point of an entire subfield of extremal combinatorics, concerned with bounding the sizes of families of sets, under various intersection requirements on sets in the family. Such results are often called \emph{Erd\H{o}s-Ko-Rado type results}. For more background and history on Erd\H{o}s-Ko-Rado type results, we refer the reader to the surveys \cite{DF83,FT16,MV15} and the references therein.

Also in \cite{EKR}, Erd\H{o}s, Ko and Rado showed that for $n$ sufficiently large depending on $k$ and $t$, the maximal size of a $t$-intersecting family $\F \subset \binom{[n]}{k}$ is ${{n-t}\choose{k-t}}$. For general $(n,k,t) \in \mathbb{N}^3$, we write $f(n,k,t)$ for this maximum. The determination of $f(n,k,t)$ for a general triple $(n,k,t) \in \mathbb{N}^3$ remained a major open problem for more than three decades. Frankl \cite{fr-conj} conjectured that for any $(n,k,t) \in \mathbb{N}^3$, there exists $r \in \mathbb{N} \cup \{0\}$ such that the family
$$\F_{n,k,t,r} := \{S \in \binom{[n]}{k}: |S \cap [t+2r]| \geq t+r\}$$
is a $t$-intersecting subfamily of $\binom{[n]}{k}$ of maximal size. Following partial results by Frankl \cite{fr-conj} and Wilson \cite{Wilson}, Frankl's conjecture was eventually proved by Ahlswede and Khachatrian~\cite{AK}:
\begin{theorem}[Ahlswede-Khachatrian `Complete Intersection Theorem', 1997]\label{Thm:AK}
Let $n,k,t \in \mathbb{N}$, and let $\F \subset \binom{[n]}{k}$ be a $t$-intersecting family. Then $|\F| \leq \max_r |\F_{n,k,t,r}|$, and equality holds only if $\F$ is isomorphic to $\F_{n,k,t,r}$ for some $r \geq 0$. In particular, if $n \geq (t+1)(k-t+1)$, then
$|\F| \leq |\F_{n,k,t,0}|={{n-t}\choose{k-t}}$.
\end{theorem}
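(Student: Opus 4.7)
The plan is to combine the classical shifting technique with a pushing-pulling argument in the spirit of Ahlswede and Khachatrian. As a first step, I would reduce to the case where $\F$ is \emph{left-compressed} (shifted): for each pair $1\le i<j\le n$, the shift $S_{ij}$ replaces every $A\in\F$ with $j\in A$, $i\notin A$, and $(A\setminus\{j\})\cup\{i\}\notin\F$, by $(A\setminus\{j\})\cup\{i\}$. A routine check shows $S_{ij}$ preserves both $|\F|$ and the $t$-intersection property, so after finitely many iterations one may assume $\F$ is shifted; since every $\F_{n,k,t,r}$ is itself shifted, this costs nothing for the upper bound, and the equality characterisation can be recovered by examining which shifts actually change the family.

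For a shifted $t$-intersecting $\F$, I would introduce a structural parameter $r(\F)$: the least non-negative integer $r$ such that $\F\subseteq\F_{n,k,t,r}$, i.e., every $A\in\F$ satisfies $|A\cap[t+2r]|\ge t+r$. Such an $r$ exists for any shifted $t$-intersecting family (e.g.\ by taking $r$ with $t+2r\ge n$). The goal is then to show, by induction on $r(\F)$, that $|\F|\le|\F_{n,k,t,r(\F)}|$, with equality iff $\F=\F_{n,k,t,r(\F)}$. The base case $r(\F)=0$ is immediate: every set in $\F$ then contains $[t]$, so $\F\subseteq\s_{[t]}$ and $|\F|\le\binom{n-t}{k-t}=|\F_{n,k,t,0}|$. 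For the inductive step, the strategy is to find a local modification of $\F$ that strictly decreases $r(\F)$ while preserving $|\F|$ and $t$-intersection; if no such modification exists, one should directly verify that $\F$ already equals $\F_{n,k,t,r(\F)}$.

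The heart of the argument, and the main obstacle, is constructing these local modifications. The natural candidate is a compound \emph{pushing-pulling} move: locate a pair $(i,j)\in[t+2r(\F)]$ together with sets $A\in\F$ such that replacing $A$ by a shifted analogue on a smaller core reduces $r(\F)$; then ``pull in'' whatever compensating sets are needed to preserve $t$-intersection. The delicate point is ensuring that every newly introduced set meets every surviving set in at least $t$ elements, which a naive shift can easily violate. A careful case analysis of how pairs $(A,B)\in\F\times\F$ sit relative to the core $[t+2r(\F)]$ should yield the correct recipe, and identify precisely which pushes can fail to reduce $r(\F)$; the shifted families where every admissible push is neutral should then be exactly the $\F_{n,k,t,r}$, giving the equality case.

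Once this is done, the final inequality $|\F|\le\max_{r\ge 0}|\F_{n,k,t,r}|$ follows by maximising the inductive bound over the admissible value $r=r(\F)$, and the ``in particular'' statement reduces to an elementary binomial-coefficient comparison: for $n\ge(t+1)(k-t+1)$ one checks directly that $|\F_{n,k,t,0}|=\binom{n-t}{k-t}\ge|\F_{n,k,t,r}|$ for every $r\ge 1$, so the $r=0$ term dominates.
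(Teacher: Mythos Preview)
The paper does not prove this theorem; it is quoted as a prior result of Ahlswede and Khachatrian \cite{AK} and used as background. So there is no ``paper's own proof'' to compare against.

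On the substance of your proposal, there is a concrete gap: the parameter $r(\F)$ need not exist. Your parenthetical justification (``take $r$ with $t+2r\ge n$'') only works when $n\le 2k-t$, since for $\F_{n,k,t,r}$ to equal all of $\binom{[n]}{k}$ you also need $k\ge t+r$; in the nontrivial regime $n>2k-t$ no such $r$ is available. More seriously, it is simply false that every shifted $t$-intersecting family is contained in a single Frankl family. For $t=1$, $k=3$, $n=7$, the Hilton--Milner family
\[
\mathrm{HM}=\bigl\{A\in\tbinom{[7]}{3}:\,1\in A,\ A\cap\{2,3,4\}\ne\emptyset\bigr\}\cup\bigl\{\{2,3,4\}\bigr\}
\]
is shifted and intersecting, yet $\{2,3,4\}\notin\F_{7,3,1,0}$, and $\{1,4,7\}\in\mathrm{HM}$ lies in neither $\F_{7,3,1,1}$ nor $\F_{7,3,1,2}$ (its intersections with $[3]$ and $[5]$ have sizes $1$ and $2$ respectively), while $\F_{7,3,1,r}=\emptyset$ for $r\ge 3$. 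So $\mathrm{HM}\not\subset\F_{7,3,1,r}$ for any $r$, and your $r(\mathrm{HM})$ is undefined.

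Note also that if $r(\F)$ \emph{were} always well-defined, the upper bound $|\F|\le|\F_{n,k,t,r(\F)}|\le\max_r|\F_{n,k,t,r}|$ would be immediate from the containment $\F\subset\F_{n,k,t,r(\F)}$; no induction or pushing--pulling would be needed. The entire difficulty of the Ahlswede--Khachatrian theorem is precisely that shifted $t$-intersecting families are \emph{not} trapped inside a single Frankl family, so one cannot argue this way. Ahlswede and Khachatrian's actual proof proceeds via ``generating sets'' and a delicate exchange argument comparing $\F$ not to a containing Frankl family but to carefully chosen competitors; the pushing--pulling is real, but it operates on a different structural invariant than your $r(\F)$.
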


In 1971, Erd\H{o}s and S\'os (see \cite{erdos1975problems}) raised the question of what happens if the $t$-intersecting condition is replaced by
the weaker condition that no two sets
in $\f$ have intersection of size exactly $t-1$.
\begin{problem}[Erd\H{o}s-S\'os, 1971]
\label{prob:es}
For $n,k,t \in \mathbb{N}$, what is the maximal size $g\left(n,k,t\right)$ of a family $\F \subset \binom{\left[n\right]}{k}$ such
that $|A \cap B| \ne t-1$ for any $A,B \in \F$?
\end{problem}
The first significant progress on this problem was made by Frankl and
F\"uredi \cite{frankl1985forbidding} in 1985; they showed that $g\left(n,k,t\right)=\binom{n-t}{k-t}$ provided $k\ge2t$ and $n$ is sufficiently large depending on $k$ and $t$.
\begin{theorem}[Frankl-F\"uredi, 1985]
\label{thm:Frankl-Furedi} For any $k,t \in \mathbb{N}$ such that $k\ge2t$, there exists $n_0(k,t) \in \mathbb{N}$ such that the following holds. Let $n \geq n_0(k,t)$, and let $\F \subset \binom{\left[n\right]}{k}$ such that $\left|A\cap B\right| \neq t-1$ for any $A,B \in \F$. Then $\left|\f\right|\le\binom{n-t}{k-t}$. Equality holds if and only if $\F$ is a $t$-umvirate.
\end{theorem}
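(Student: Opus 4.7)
I would reduce the problem to the $t$-intersecting case handled by Theorem~\ref{Thm:AK}. Dichotomise $\F$ into (i) $\F$ is $t$-intersecting, or (ii) there exist $A_0,B_0 \in \F$ with $|A_0 \cap B_0| \leq t-2$. For (i), taking $n_0(k,t) \geq (t+1)(k-t+1)$ and applying Theorem~\ref{Thm:AK} gives $|\F| \leq |\F_{n,k,t,0}| = \binom{n-t}{k-t}$ with equality iff $\F \cong \F_{n,k,t,0} = \s_{[t]}$, a $t$-umvirate. So it suffices to prove that case (ii) forces the strict inequality $|\F| < \binom{n-t}{k-t}$.

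For case (ii), I would invoke the Erd\H{o}s--Ko--Rado sunflower (delta-system) lemma: any subfamily of $\binom{[n]}{k}$ of size exceeding $(s-1)^k k!$ contains a sunflower with $s$ petals. Assuming $|\F| \geq \binom{n-t}{k-t}$, for $n$ large this exceeds $(s-1)^k k!$ for any fixed $s$, so $\F$ contains a sunflower $\{S_1,\ldots,S_m\}$ with core $Y$ and $m$ arbitrarily large. Because any two petals meet exactly in $Y$, the forbidden-intersection hypothesis forces $|Y|\neq t-1$. If $|Y| \geq t$, fix any $T \subseteq Y$ with $|T|=t$: for $C \in \F$ with $|C \cap T|=t-1$ and $|C \cap (Y \setminus T)|=0$, the condition $|C \cap S_i| \neq t-1$ forces $C$ to meet every petal $P_i := S_i \setminus Y$, which is impossible once $m > k$. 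Handling the other possible values of $|C \cap T|$ and $|C \cap (Y \setminus T)|$ requires a layered sunflower iteration inside $\F \setminus \s_T$; together these should yield $|\F \setminus \s_T| = O(n^{k-t-1})$, so $\F \subseteq \s_T$, contradicting $|\F| > \binom{n-t}{k-t}$.

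The genuine obstacle is the subcase $|Y| \leq t-2$. Here the petals $P_i$ have size $k-|Y| \geq k-t+2 \geq t+2$ (using $k \geq 2t$), and for every $C \in \F$ with $a := |C \cap Y| \leq t-2$, the constraint $|C \cap P_i| \neq t-1-a$ (with $t-1-a \geq 1$) must hold for all $i$. Since $\sum_i |C \cap P_i| \leq k$ and $m$ is large, most petals automatically satisfy $|C \cap P_i|=0$, so the sunflower alone does not restrict $C$ sufficiently. My plan is to iterate the delta-system extraction: pass to a subfamily of $\F$ with a fixed trace on $Y \cup \bigcup_i P_i$, apply the sunflower lemma inside, and repeat. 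Each iteration either uncovers a core of size $\geq t$ (reducing to the previous subcase) or strictly shrinks the residual family's effective dimension; the hypothesis $k \geq 2t$ bounds the number of such iterations, ultimately yielding the desired $O(n^{k-t-1})$ bound and completing the contradiction.
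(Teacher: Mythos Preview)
This theorem is not proved in the paper; it is quoted as a result of Frankl and F\"uredi \cite{frankl1985forbidding}, so there is no ``paper's own proof'' to compare against. Your delta-system strategy is in the spirit of Frankl and F\"uredi's original argument, so the overall direction is reasonable.

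However, the sketch has genuine gaps. In the $|Y|\ge t$ subcase you write ``$|\F\setminus\s_T|=O(n^{k-t-1})$, so $\F\subseteq\s_T$, contradicting $|\F|>\binom{n-t}{k-t}$'': this is a non-sequitur. An $O(n^{k-t-1})$ remainder does not give containment, and in case~(ii) containment in any $\s_T$ is impossible anyway, since your witnesses $A_0,B_0$ cannot both lie in a common $t$-umvirate. What is actually needed is a trade-off showing that each $C\in\F\setminus\s_T$ excludes from $\F$ more sets of $\s_T$ than it contributes, and you have not supplied this. More seriously, in the $|Y|\le t-2$ subcase --- which you correctly flag as the crux --- a single sunflower gives essentially no information: for any $C$ most petals are automatically disjoint from $C$, and the constraint $|C\cap P_i|\ne t-1-a$ with $t-1-a\ge 1$ is vacuous for those petals. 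Frankl and F\"uredi handle this via a substantially stronger structure (a delta-system in which the full intersection pattern with an auxiliary set, not just the core, is prescribed), and it is precisely there that the hypothesis $k\ge 2t$ is used. Your ``iterate and hope the dimension drops'' plan does not yet capture this mechanism, and as written it is not clear why the iteration terminates with a core of size at least $t$ rather than repeatedly producing small cores.
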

As pointed out in \cite{frankl1985forbidding}, the hypothesis $k\ge2t$ in Theorem \ref{thm:Frankl-Furedi} is necessary, in the sense that $g\left(n,k,t\right)>\binom{n-t}{k-t}$ if $k<2t$, for all sufficiently large $n$. 

In 2007, Keevash, Mubayi and Wilson~\cite{kmw} presented a complete solution of the case $(k=4,t=2)$, for all $n \in \mathbb{N}$. Recently, the second and third authors~\cite{chvatal} proved that for any $t \in \mathbb{N}$, there exists $c = c(t) >0$ such that $g(n,k,t) = {n-t \choose k-t}$ whenever $1/c \leq k \leq cn$; the extremal families in this range are precisely the $t$-umvirates. However, no general result was known for $k=\Theta(n)$, and in particular, in any case where the the extremal families are not $t$-umvirates.

In this paper, we prove the following Ahlswede-Khachatrian type result for the Erd\H{o}s-S\'os problem, resolving the latter in the case where $t$ is fixed, $k/n$ is bounded away from $0$ and $1/2$ and $n$ is large.
\begin{theorem}
\label{thm:main-result-es}
For any $t \in \mathbb{N}$ and any $\zeta >0$, there exists $n_0(t,\zeta) \in \mathbb{N}$ such that the following holds. Let $n\ge n_{0}\left(t,\zeta,\right)$, let $n,k \in \mathbb{N}$ with $\zeta n < k <(\tfrac{1}{2}-\zeta)n$, and let $\f \subset \binom{\left[n\right]}{k}$
such that no two sets in $\f$ have intersection of size $t-1$. Then $\left|\f\right|\le f\left(n,k,t\right)$, and equality holds only if $\f$ is isomorphic to $\f_{n,k,t,r}$, for some $r\ge0$.
\end{theorem}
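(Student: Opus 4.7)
The plan is to reduce Theorem \ref{thm:main-result-es} to the Complete Intersection Theorem via a junta approximation, and then to upgrade the resulting approximate bound to an exact equality by invoking the sharp stability version of Ahlswede-Khachatrian (also proved in this paper). I would first apply the weak regularity lemma to $\f$, obtaining a junta $\J \subseteq \p(J)$ on a coordinate set $J \subset [n]$ of size $j = O_{t,\zeta}(1)$ such that $\f$ is approximately contained in the lift $\J^* := \{S \in \binom{[n]}{k} : S \cap J \in \J\}$, and such that for each $B \in \J$ the fibre $\f^B := \{R \subseteq [n] \setminus J : B \cup R \in \f\}$ is either negligible or pseudorandom in the sense delivered by the lemma.

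Next I would use the forbidden-intersection hypothesis to constrain $\J$. For $B_1, B_2 \in \J$ whose fibres $\f^{B_1}, \f^{B_2}$ are both non-negligible, and for pairs $S_i = B_i \cup R_i \in \f$, the hypothesis reads
\[
 |R_1 \cap R_2| \neq (t-1) - |B_1 \cap B_2|.
\]
If $|B_1 \cap B_2| \le t-1$, this forbids a particular non-negative integer value $c$. Pseudorandomness of the fibres, however, forces the number of pairs $(R_1,R_2) \in \f^{B_1} \times \f^{B_2}$ with $|R_1 \cap R_2| = c$ to be close to the count for two uniformly random fibres of the same sizes, which is a positive absolute quantity whenever $k = \Theta(n)$ and both fibres are of non-negligible measure. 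This contradiction forces $|B_1 \cap B_2| \ge t$ on the surviving sub-junta $\J'$ consisting of the $B$ with non-negligible fibres; hence $\J'$ is a $t$-intersecting subfamily of $\p(J)$, and consequently its lift $(\J')^*$ is a $t$-intersecting subfamily of $\binom{[n]}{k}$.

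Combining the previous steps gives the approximate inequality $|\f| \le |(\J')^*| + o(\binom{n}{k}) \le f(n,k,t) + o(\binom{n}{k})$, via Theorem \ref{Thm:AK}. Under the extremality hypothesis $|\f| \ge f(n,k,t)$, the family $(\J')^*$ is near-extremal among $t$-intersecting families, so the sharp AK stability theorem identifies $\J'$ with the characteristic junta of some $\f_{n,k,t,r}$, up to a small symmetric difference. I would then close the residual gap by a bootstrapping step: any rogue set $S \in \f \setminus \f_{n,k,t,r}$ has many ``neighbours'' $S' \in \f_{n,k,t,r}$ with $|S \cap S'| = t-1$, and since $\f \cap \f_{n,k,t,r}$ is dense in $\f_{n,k,t,r}$, most such $S'$ actually lie in $\f$, contradicting the hypothesis. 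A symmetric counting argument rules out $\f \subsetneq \f_{n,k,t,r}$ in the equality case, completing the characterisation.

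The main obstacle is the second step: the pseudorandomness delivered by the weak regularity lemma must be strong enough to control the count of pairs in a fibre at each of the small, \emph{atypical} intersection values $c \in \{0,1,\ldots,t-1\}$, not merely at the typical value $\Theta(k^2/n)$. Ensuring that the quantitative parameters of the regularity lemma, the AK stability theorem, and the final bootstrapping step all mesh without prohibitive losses throughout the range $\zeta n < k < (\tfrac{1}{2}-\zeta)n$ is the technical core of the argument.
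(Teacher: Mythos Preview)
Your overall architecture matches the paper's, but the mechanisms you propose for steps~2 and~4 both rest on the same mistaken intuition: that density or pseudorandomness at scale $\Theta(1)$ can detect pairs of $k$-sets with $|S\cap S'|=O(1)$. When $k=\Theta(n)$, such pairs form an exponentially small fraction of $\binom{[n]}{k}\times\binom{[n]}{k}$, far below the resolution of slice-quasirandomness; indeed there is no counting lemma here (Remark~\ref{remark:counting}). You rightly flag step~2 as the main obstacle, and the paper resolves it by a different device (Lemma~\ref{lem:qrc}): restrict both fibres to the subcube $\{S:[t-1-|B_1\cap B_2|]\subset S\}$---slice-quasirandomness keeps them dense there---reducing the question to the assertion that two moderately dense uniform families below the middle layer cannot be cross-intersecting, which is proved via Friedgut's junta theorem (Lemma~\ref{lem:dfv}).

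Your step~4, which you present as routine, has the same flaw in sharper form. For a rogue $S\in\f\setminus\f_{n,k,t,r}$, the set $N(S)=\{S'\in\f_{n,k,t,r}:|S\cap S'|=t-1\}$ has size at most $\binom{k}{t-1}\binom{n-k}{k-t+1}$, an exponentially small fraction of $\binom{n}{k}$; yet the approximation you carry only gives $|\f_{n,k,t,r}\setminus\f|\le\epsilon\binom{n}{k}$ for a fixed constant $\epsilon>0$, so you cannot force any element of $N(S)$ into $\f$. The paper's endgame (Lemma~\ref{lem:max-juntas}) avoids individual rogue sets and works slice-by-slice: for each $B\notin\g$ one picks $A\in\g$ with $|A\cap B|<t$, restricts further by a set $C$ of size $t-1-|A\cap B|$ to convert the forbidden-$(t{-}1)$-intersection between $\f_J^A$ and $\f_J^B$ into a genuine cross-intersecting condition between $\f_{J\cup C}^{A\cup C}$ (of measure $\ge 1-O(\epsilon)$) and $\f_{J\cup C}^{B\cup C}$, and then a Kruskal--Katona bound (Lemma~\ref{lem:cross-intersecting}) gives $\mu(\f_J^B)=O(\epsilon^{c})$ for some $c>1$. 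Summing over $B$, the $\Omega(\epsilon)$ deficit inside $\langle\g\rangle$ dominates the $O(\epsilon^{c})$ surplus outside, so $\mu(\f)\le\mu(\langle\g\rangle)$ with equality only if $\f=\langle\g\rangle$.
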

Combined with the aforementioned previous results, this resolves the Erd\H{o}s-S\'{o}s problem for all triples $(n,k,t) \in \mathbb{N}^3$ such that $2t \leq k \leq (1/2 - \zeta)n$ and $n$ is sufficiently large depending on $t$ and $\zeta$, for any $\zeta >0$, giving also a characterisation of the extremal families in these cases. Since Problem~\ref{prob:es} is trivial for $k \geq (n+t)/2$ (as in this case, any two distinct sets in $\binom{[n]}{k}$ have intersection of size at least $t$), the only remaining cases are $k<2t$ and $n-o(n)<k<(n+t)/2$. 

Our first main tools in proving Theorem \ref{thm:main-result-es} is the following sharp `stability' result for the Ahlswede-Khachatrian theorem, which 
in itself proves a conjecture of Friedgut~\cite{Friedgut08} from 2008.
\begin{theorem}\label{thm:main-ak-stability}
For any $t \in \mathbb{N}$ and any $\zeta >0$, there exists $C = C(t,\zeta) >0$ such that the following holds. Let $n,k,d\in\mathbb{N}$ such that $\zeta n < k < (\tfrac{1}{2}-\zeta)n$, and let $\F \subset \binom{[n]}{k}$ be a $t$-intersecting
family such that $|\F|> f(n,k,t)-\frac{1}{C}\binom{n-d}{k}$. Then there exists $\G \subset \binom{[n]}{k}$ isomorphic to some $\F_{n,k,t,r}$,
such that $\left|\F\backslash\G\right|<C\binom{n-d}{k-d}$, where $r \leq C$.
\end{theorem}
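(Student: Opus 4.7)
The plan is to combine three ingredients: a compression-based reduction, a coarse initial stability estimate, and an iterative bootstrap driven by a shadow-counting isoperimetric inequality.

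First, I would reduce to the case where $\F$ is left-compressed. Applying the standard $(i,j)$-shifting operators to $\F$ produces a shifted family of the same size which is still $t$-intersecting, and a careful accounting (using that each $\F_{n,k,t,r}$ is itself shifted, and that the natural notion of ``distance to the nearest extremal family up to isomorphism'' does not increase under shifting) shows that it suffices to prove the conclusion for shifted $\F$; in that case, the optimal $\G\cong \F_{n,k,t,r}$ will be the canonical copy of $\F_{n,k,t,r}$ for the appropriate $r$.

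Second, I would establish a coarse base case. A variant of the Friedgut--Dinur stability theorem, or a direct Frankl-style spectral/eigenvalue argument, shows that the hypothesis $|\F|>f(n,k,t)-\tfrac{1}{C}\binom{n-d}{k}$ already implies the existence of some $r$ (bounded in terms of $t,\zeta$ alone) and a $\G_0\cong \F_{n,k,t,r}$ with $|\F\setminus \G_0|\le \epsilon(C,\zeta)\binom{n}{k}$. The boundedness of $r$ follows from the fact that, for $k/n\in(\zeta,\tfrac12-\zeta)$, the function $r\mapsto |\F_{n,k,t,r}|$ is maximised at some bounded $r^\ast=r^\ast(n,k,t)$, and drops off by a definite multiplicative factor once $r$ exceeds a bounded threshold.

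The heart of the argument is an iterative bootstrap. Assume inductively that $|\F\setminus \G_0|\le M\binom{n-d'}{k-d'}$ for some $0\le d'<d$; the goal is the same bound with $d'$ replaced by $d'+1$ and a slightly larger constant $M'$. Each ``bad'' set $A\in\F\setminus \G_0$ has $|A\cap[t+2r]|<t+r$, and the $t$-intersecting condition forces $\F$ to miss every $B\in\G_0$ with $|A\cap B|<t$. Tallying these forbidden sets over all bad $A$, while using the shifted structure of $\F$ and a Kruskal--Katona-flavoured shadow inequality on the slice $\binom{[n]}{k}$, yields a lower bound on $|\G_0\setminus \F|$ that dominates $|\F\setminus \G_0|\cdot (n-d')/(k-d')$ up to a constant. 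Combining with the identity $|\G_0\setminus \F|-|\F\setminus \G_0|=f(n,k,t)-|\F|\le \tfrac{1}{C}\binom{n-d}{k}$ and the fact that $\binom{n-d'-1}{k-d'-1}/\binom{n-d'}{k-d'}=(k-d')/(n-d')$, this improves the bound on $|\F\setminus \G_0|$ by exactly the required factor. Iterating $d$ times starting from the coarse base yields the theorem.

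The principal obstacle is controlling the overlap between the ``forbidden'' shadows produced by different bad sets $A$: a naive union bound would double-count many $B\in\G_0\setminus \F$ and lose a factor of roughly $\binom{t+2r}{t}$ (or worse) per iteration, causing the constants to blow up over $d$ steps. The left-compressedness of $\F$ and the precise combinatorial structure of $\F_{n,k,t,r}$ are the tools for disentangling these overlaps; making this disentanglement quantitative with constants depending only on $t$ and $\zeta$ (not on $d$, $k$, or $n$) is the technical heart of the proof. The hypothesis $\zeta n<k<(\tfrac12-\zeta)n$ is crucial here, as it keeps the geometric contraction ratio $(k-d')/(n-d')$ bounded away from both $0$ and $1$ uniformly in $d'\le d$, so that the iterated bound remains controlled by an absolute $C=C(t,\zeta)$.
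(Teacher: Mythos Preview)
Your proposal has two genuine gaps, and the paper's route is quite different from yours.

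\textbf{Step 1 is the most serious problem.} The claim that ``distance to the nearest extremal family up to isomorphism does not increase under shifting'' is exactly the hard part, and you have not justified it. What you need is the converse implication: if the \emph{shifted} family $\F'$ is close to the canonical $\F_{n,k,t,r}$, then the \emph{original} $\F$ was close to some isomorphic copy. For the basic Erd\H{o}s--Ko--Rado case ($t=1$, dictatorships) this can be pushed through, but for general $r$ the extremal families $\F_{n,k,t,r}$ have richer structure and un-shifting is delicate. The paper does \emph{not} argue this way. Instead, it first proves a biased-measure stability theorem, and there the shifting is handled by a ``barrier'' argument: one shows there is a threshold $c=c(t,\zeta)$ such that if $\mu_p(\F\setminus \mathbb{AK})>c$ then any $(c/2)$-small modification $\G$ still has $\mu_p(\G\setminus\mathbb{AK})>c$; since shifting can be performed in $(c/2)$-small steps that terminate at a junta (which must be extremal), one gets a contradiction. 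This avoids ever having to ``un-shift''.

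\textbf{Step 3 is under-specified in the way you yourself flag.} You propose iterating $d$ times, improving $d'\mapsto d'+1$ at each step with ``slightly larger'' constants. But you give no mechanism for the overlap control, and any multiplicative loss per step, however small, compounds to something depending on $d$, which destroys the conclusion (you need $C=C(t,\zeta)$ independent of $d$). The paper does \emph{not} iterate. After transferring the biased-measure stability to a weak uniform stability, it applies a single sharp cross-intersecting inequality (a consequence of Hilton's lemma on lexicographic compressions plus an inductive lemma on $|\A|+C|\B|$ for cross-intersecting $\A,\B$): for the maximising bad slice $\F_{[j]}^{O}$ and a good slice $\F_{[j]}^{I}$ with $|I\cap O|<t$, one bounds $|\F_{[j]}^{I}|+2^j|\F_{[j]}^{O}|$ directly in terms of $\binom{n-d}{k-d}$ and $\binom{n-d}{k}$, and reads off the conclusion in one shot. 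No shadow double-counting arises, and the constants manifestly depend only on $t$ and $\zeta$.
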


Theorem~\ref{thm:main-ak-stability} is tight, up to a factor depending only on $t$ and $\zeta$, as evidenced by the families
\begin{align*}
\mathcal{H}_{n,k,t,r,d} & :=\left\{ A \in \binom{[n]}{k}\,:\,|A \cap [t+2r]| \geq t+r,\, A \cap \left\{t+2r+1,\ldots,t+2r+d\right\} \ne\emptyset\right\} \\
 & \cup\left\{ A \in \binom{[n]}{k}\,:\,|A \cap [t+2r]|=t+r-1,\,\left\{t+2r+1,\ldots,t+2r+d\right\}\subset A\right\},
\end{align*}
for sufficiently large $n$ and $d$.

Our second main tool in proving Theorem \ref{thm:main-result-es} is a `weak regularity lemma' for families $\F\subset \binom{[n]}{k}$, where $k/n$ is bounded away from 0 and 1. This states that such a family $\F$ is approximately contained within a `junta' (i.e.\ a family depending upon few coordinates), such that the restriction of $\F$ to each subcube determined by the junta is `pseudorandom' in a certain sense.
To state it formally, we need some more definitions.

For $0 \leq k \leq n$, we write $\mu$ for the uniform measure on $\binom{[n]}{k}$, i.e.\
$$\mu(\F) := |\F|/\binom{n}{k},\quad \F \subset \binom{[n]}{k}.$$
For $\F \subset \binom{[n]}{k}$ and $B \subset J \subset [n]$, we write $\F_J^B = \{S \setminus B:\ S \in \F,\ S \cap J =B\} \subset \binom{[n] \setminus J}{k - |B|}$. We call these families `slices' of $\F$.

For $J \subset [n]$, we say that a family $\J \subset \binom{[n]}{k}$ is a {\em $J$-junta} if there exists a family $\G \subset \p(J)$ such that $S \in \J$ if and only if $S \cap J \in \G$, for all $S \in \binom{[n]}{k}$. In this case, we say that $\F$ is the {\em $J$-junta generated by $\G$}, and we write $\J = \langle \G \rangle$.

The crucial definition is as follows. For $\delta >0$ and $h \in \mathbb{N}$, we say a family $\F \subset \binom{[n]}{k}$ is {\em $(\delta,h)$-slice-quasirandom} if for any $J \subset [n]$ with $|J| \leq h$, and any $B \subset J$, we have $|\mu(\F_J^B) - \mu(\F)| < \delta$. In other words, for every $B \subset J$, $\mu(\F_J^B)$ is close to $\mu(\F)$, which of course would be the expected value of $\mu(\F_J^B)$ if $\F$ were a random subset of $\binom{[n]}{k}$ with density $\mu(\F)$. (We emphasise that we regard $\F_J^B$ as a subset of $\binom{[n] \setminus J}{k - |B|}$, and so $\mu(\F_J^B) = |\F_J^B|/\binom{n-|J|}{k-|B|}$.)

Here, then, is our `weak regularity lemma'.

\begin{theorem}
\label{thm:weak-reg}
For any $\zeta,\delta,\epsilon\in\left(0,1\right)$ and $h \in \mathbb{N}$,
there exists $j=j(\zeta,\delta,h,\epsilon)\in\mathbb{N}$ and $n_0 = n_0(\zeta,\delta,h,\epsilon) \in \mathbb{N}$ such that the following holds. Let $n \geq n_0$, let $\zeta n < k < \left(1-\zeta\right)n$,
and let $\f \subset \binom{\left[n\right]}{k}$. Then
there exist a set $J\subset \left[n\right]$ with $|J| \leq j$,
and a subset $\G \subset \p(J)$, such that:
\begin{enumerate}
\item[(1)] $\mu\left(\f\backslash\left\langle \G \right\rangle \right)<\epsilon.$

\item[(2)] For each $B \in \G$, the family $\f_{J}^{B}$ is a $\left(\delta,h\right)$-slice-quasirandom family satisfying $\mu(\f_{J}^{B})>\frac{\epsilon}{2}$.
\end{enumerate}
\end{theorem}

Informally, Theorem \ref{thm:weak-reg} says that for any family $\f\subset \binom{\left[n\right]}{k}$, there exists a set $J\subset [n]$ such that the $2^{|J|}$ slices $\{\F_J^B:\ B \subset J\}$ can be divided into two `types': `good' slices for which $\F_J^B$ is `random-like' and not too small, and `bad' slices, with small total size. Alternatively, condition (1) says that $\F$ is almost contained within the $J$-junta $\langle \G \rangle$, and condition (2) says that for each subcube of the form $\{S \subset [n]:\ S \cap J = B\}$ with $B \in \G$, the restriction of $\F$ to that subcube is `random-like' and not too small.

The remainder of this paper is structured as follows. In subsection \ref{subsec:prior}, we discuss some related prior work on stability for Erd\H{o}s-Ko-Rado type theorems. In subsection \ref{subsec:reg}, we discuss regularity lemmas in general, and compare some previously-known ones with ours. In subsection \ref{subsec:sketch}, we sketch the methods we will use to prove our main theorems. In Section~\ref{sec:preliminaries} we present some of the known results and techniques which we will use in our proofs -- concerning juntas, influences, shifting, cross-intersecting
families, and reduction to the `biased measure' setting. In Section \ref{sec:ak-stability}, we prove Theorem \ref{thm:main-ak-stability}, our stability result for the Ahlswede-Khachatrian theorem. In Section \ref{sec:reg}, we prove Theorem \ref{thm:weak-reg}, our weak regularity lemma for hypergraphs of linear uniformity. In Section \ref{sec:main-result-es}, we prove Theorem \ref{thm:main-result-es}, our main result on the Erd\H{o}s-S\'os problem. We conclude with some open problems in Section \ref{sec:conc}.

\subsection{Stability for the Erd\H{o}s-Ko-Rado and Ahlswede-Khachatrian theorems}
\label{subsec:prior}
Over the last fifty years, several authors have obtained \emph{stability results} for the Erd\H{o}s-Ko-Rado (EKR) and
the Ahlswede-Khachatrian (AK) theorems. In general, a stability result asserts that if the size of a family is `close' to the maximum possible size, then that family is `close' (in an appropriate sense) to an extremal family.

One of the first such results is due to Hilton and Milner~\cite{HM67}, who showed in 1967 that if the size of an intersecting family is \emph{very} close to ${{n-1}\choose{k-1}}$, then the family is {\it contained} in a dictatorship. A similar result for the complete intersection theorem in the domain $n \geq (k-t+1)(t+1)$ was obtained in 1996 by Ahlswede and Khachatrian~\cite{AK96}. A simpler proof of the latter result was presented by Balogh and Mubayi~\cite{BM08}, and an alternative result of the same class was obtained by Anstee and Keevash~\cite{AK06}.

For families whose size is not very close to the maximum, Frankl~\cite{Frankl87} obtained in 1987 a strong stability version of the EKR theorem which implies that if an intersecting family $\mathcal{F}$ satisfies $|\mathcal{F}| \geq (1-\epsilon){{n-1}\choose{k-1}}$, then there exists a dictatorship $\mathcal{D}$ such that $|\F \setminus \mathcal{D}| = O(\epsilon^{\log_{1-p}p}) {{n}\choose{k}}$, where $p \approx k/n$. Frankl's result is tight and holds not only for  $|\mathcal{F}|$ close to ${{n-1}\choose{k-1}}$ but rather whenever $|\mathcal{F}| \geq 3{n-2\choose k-2}-2{n-3 \choose k-3}$. Proofs of somewhat weaker results using entirely different techniques were later presented by Dinur and Friedgut~\cite{DF09}, Friedgut~\cite{Friedgut08} and Keevash~\cite{Keevash08}. In~\cite{KM10}, Keevash and Mubayi used Frankl's result to prove an EKR-type theorem on set systems that do not contain a simplex or a cluster. Recently, a different notion of
stability for the EKR theorem was suggested by Bollob\'as, Narayanan and Raigorodskii \cite{BNR16}; this has been studied in several subsequent papers (e.g.,~\cite{BBN16+,DK16+}).

The case of the AK theorem appeared much harder. The first stability result was obtained by Friedgut~\cite{Friedgut08}, who showed in 2008 that for any $\epsilon \geq \sqrt{(\log n) / n}$, $\zeta >0$ and $\zeta n < k<(1/(t+1)- \zeta)n$, if a $t$-intersecting $\F \subset \binom{[n]}{k}$ satisfies $|\F| \geq f(n,k,t)(1-\epsilon)$, then there exists a $t$-umvirate $\G$ such that $|\F \setminus \G| = O_{t,\zeta}(\epsilon) {{n}\choose{k}}$. The proof of Friedgut uses Fourier analysis and spectral methods. In~\cite{EKL16+}, the authors proved a strong version of Friedgut's result, which asserts that under the conditions of Friedgut's theorem, $|\F \setminus \G| = O(\epsilon^{\log_{1-p}p}) {{n}\choose{k}}$ (where $p \approx k/n$) for some $t$-umvirate $\G$, and showed that it is tight by an explicit example. The main technique of~\cite{EKL16+} is to utilize isoperimetric inequalities on the hypercube.

\medskip All the results described above apply only in the so-called `principal domain' $k<n/(t+1)$, in which the extremal example has
the simple structure of a $t$-umvirate. In the general case, where the extremal examples are the more complex families $\F_{n,k,t,r}$,
no stability result has been obtained so far (to the best of our knowledge). This situation resembles the history of the `exact' results, where Theorem~\ref{Thm:AK}
was proved for $k<n/(t+1)$ by Wilson~\cite{Wilson} in 1984, but it was 13 more years until the general case was
resolved by Ahlswede and Khachatrian.

\medskip
The main conjecture stated in Friedgut's 2008 paper~\cite{Friedgut08} is that his stability result holds for all
$\zeta n< k< (1/2 -\zeta)n$. To state the conjecture, we need some additional explanation.

A direct computation shows that for any $\beta \in (0,1/2)$ and any $t \in \mathbb{N}$, there is either a unique value of $r$ or two consecutive values of $r$ that
asymptotically maximize $|\F_{n, \lfloor \beta n \rfloor,t,r}|$ (as $n \to \infty$). We say that $\beta$ is {\em non-singular for $t$} if there is a unique such value of $r$, which we then denote by $r^*=r^*(\beta,t)$. Otherwise we say that $\beta$ is {\em singular for $t$}, and we let $r^*$ and $r^*+1$ be the two
extremal values of $r$.
\begin{conjecture}\cite[Conjecture~4.1]{Friedgut08}\label{Conj:Friedgut}
Let $t \in \mathbb{N}$, let $\zeta >0$, let $\beta \in [\zeta,1/2-\zeta]$ be non-singular for $t$, let $\epsilon >0$, and let $k= \lfloor \beta n \rfloor$.
If $\F \subset \binom{[n]}{k}$ is a $t$-intersecting family such that $|\F| \geq (1 - \epsilon)|\F(n, k, t, r^*)|$,
then there exists a set $B \subset [n]$ of size $t + 2r^*$ such that $|\{A \in \F : |A \cap B| \geq t + r^*\}| \geq (1 - O_{t,\zeta}(\epsilon))|\F|$. If $\beta$ is singular for $t$, then either the above holds or the corresponding statement for $r^*+ 1$ holds.
\end{conjecture}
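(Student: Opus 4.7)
The plan is to combine shifting (compression) with an iterative bootstrap argument driven by a slice isoperimetric inequality. The first move is to apply the standard $(i,j)$-shift operators to $\F$, producing a compressed $t$-intersecting family of the same size. Compression preserves $t$-intersection and ``aligns'' any nearly extremal structure with the coordinate ordering, so that the target extremal family can eventually be taken in standard position, $\G = \F_{n,k,t,r}$ for some $r$ depending on $k/n$.

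Next, I would establish an initial crude stability statement: there exist $r \le C(t,\zeta)$ and a small constant $\epsilon=\epsilon(t,\zeta)$ such that $|\F \setminus \F_{n,k,t,r}| \le \epsilon \binom{n}{k}$. A natural route is to invoke Theorem \ref{thm:weak-reg} with suitable parameters, which produces a junta $\langle \G \rangle$ on a coordinate set $J$ of bounded size that approximates $\F$, together with pseudorandom (non-negligible) slices $\F_J^B$. Because $\F$ is $t$-intersecting, the slices are pairwise cross-$t$-intersecting; combined with slice-quasirandomness and a counting argument on $\p(J)$, this forces the generating family $\G \subseteq \p(J)$ to coincide, up to a small symmetric difference, with the restriction of some Ahlswede-Khachatrian extremal family to $J$. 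The shifting step then fixes the candidate in standard position $\F_{n,k,t,r}$.

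Starting from this crude bound, the heart of the proof is the bootstrap: given $|\F \setminus \G| \le C_i \binom{n-d_i}{k-d_i}$ with $d_i < d$, improve it to $|\F \setminus \G| \le C_{i+1}\binom{n-d_i-1}{k-d_i-1}$. The inductive mechanism is that any $A \in \F \setminus \G$ must $t$-intersect every $B \in \F \cap \G$, and $\F \cap \G$ is nearly all of $\G$. This restricts $A$ to a tight local configuration with respect to the kernel $[t+2r]$ of $\G$, and a sharp slice isoperimetric inequality (of Kruskal-Katona type, adapted to the shifted $t$-intersecting setting) converts the restriction into a strictly sharper size bound, raising the codimension parameter by $1$ at the cost of only a bounded multiplicative factor. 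After at most $d$ iterations we arrive at the desired bound $|\F \setminus \G| \le C\binom{n-d}{k-d}$.

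The main obstacle is making the bootstrap iteration quantitatively tight: the isoperimetric step must yield the exact improvement ratio $\binom{n-d_i-1}{k-d_i-1}/\binom{n-d_i}{k-d_i}$ with no extraneous polylogarithmic losses, and it must hold uniformly in $d_i$ (since the total number of iterations depends on $d$, which is allowed to grow with $n$). The density hypothesis $\zeta n < k < (\tfrac{1}{2}-\zeta)n$ is crucial on both sides: it bounds $r$ by a constant $C(t,\zeta)$, and it keeps the binomial ratios above bounded away from $0$ and $1$, so that the constants accumulated over the iterations telescope into the single constant $C=C(t,\zeta)$ claimed in the statement.
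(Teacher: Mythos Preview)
Your proposal has two genuine gaps. First, the opening shifting step does not do what you claim. You compress $\F$ and then argue about the compressed family, but the conclusion of the theorem concerns the original $\F$: it must be close to \emph{some} isomorphic copy of a Frankl family. Compression is not a permutation and loses information; knowing that the compressed family is close to $\F_{n,k,t,r}$ in standard position does not by itself let you recover an extremal family close to the original $\F$. The paper's use of shifting is much more delicate (and carried out first in the biased-measure setting): each shift is viewed as a single small modification of $\F$, and a separate ``barrier'' lemma (Corollary~\ref{cor:Threshold}, derived from the bootstrapping Lemma~\ref{Lemma:Main}) shows that no small modification can carry a $t$-intersecting family across the threshold between ``far from every extremal family'' and ``close to one''. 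One then unrolls: the fully-modified family is a junta, hence exactly extremal; walking the sequence of small modifications backwards, the original $\F$ must also be close to some extremal family. Your outline never sets up this mechanism, and simply pre-compressing $\F$ does not substitute for it.

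Second, your iterative bootstrap, improving the codimension parameter from $d_i$ to $d_i+1$ at each step, must run $d$ times, and $d$ may be as large as $\Theta(n)$. You acknowledge that avoiding multiplicative losses here is ``the main obstacle'', but you do not resolve it; any constant factor $C_0>1$ per step explodes to $C_0^{d}$, and there is no telescoping. The paper does not iterate at all. After obtaining a crude bound $|\F\setminus\G| \le \binom{n-c-j}{k-c-j}$ for a fixed constant $c$ (via the biased-measure stability Theorem~\ref{thm:biased-ak-stability} and the transfer in Section~\ref{sec:sub:reduction}, not via the regularity lemma Theorem~\ref{thm:weak-reg}), it applies a single sharp cross-intersecting inequality (Proposition~\ref{lem:from David}, which rests on Lemma~\ref{lem:Technical-long}) to each pair of slices $\F_{[j]}^{I},\F_{[j]}^{O}$ with $I\in\G'$, $O\notin\G'$, obtaining $|\F_{[j]}^{O}|\le\binom{n-d}{k-d}$ in one shot for the full target value of $d$.
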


It is easy to see that our Theorem \ref{thm:main-ak-stability} implies Conjecture~\ref{Conj:Friedgut}. In fact, Theorem \ref{thm:main-ak-stability} implies that the conclusion of Conjecture \ref{Conj:Friedgut} can be strengthened to
$|\{A \in \F : |A \cap B| \geq t + r^*\}| \geq (1 - O_{t,\zeta}(\epsilon^{\log_{1-\beta}\beta}))|\F|$ (or $|\{A \in \F : |A \cap B| \geq t + r^*+1\}| \geq (1 - O_{t,\zeta}(\epsilon^{\log_{1-\beta}\beta}))|\F|)$, if $\beta$ is singular for $t$). Combining Theorem \ref{thm:main-ak-stability} with Theorem 1.5 in \cite{EKL16+}, one obtains a stability version of the Complete Intersection Theorem for all $k<(1/2-\zeta)n$, sharp up to a constant factor depending only upon $t$ and $\zeta$.

\subsection{Regularity lemmas}
\label{subsec:reg}
Over the last forty years, `regularity lemmas' have been crucial ingredients in a wide variety of important results in Combinatorics. The earliest such lemma is the classical Szemer\'edi Regularity Lemma for graphs \cite{sz}. Roughly speaking, this states that the vertex-set of any large, dense graph $G$ can be partitioned into a bounded number of parts $V_0,V_1,\ldots,V_N$, where $V_0$ is small, $|V_1| = |V_2| = \ldots = |V_N|$, and for most pairs $\{i,j\} \in \binom{[N]}{2}$, the induced bipartite subgraph $G[V_i,V_j]$ of $G$ with parts $V_i$ and $V_j$ is `pseudorandom', in the sense that for any large subsets $A \subset V_i$ and $B \subset V_j$, $e(G[A,B])$ is close to what one would expect if $G[V_i,V_j]$ were a random bipartite graph with the same edge-density. 

Since Szemer\'edi proved his celebrated lemma, `regularity lemmas' for a wide variety of combinatorial structures have been obtained. Broadly speaking, such lemmas state that a sufficiently large combinatorial structure can be partitioned into a bounded number of pieces which are `random-like' in the sense that they behave roughly as if they were `random' structures of the same density, together with a small amount of `waste' or `noise'. 

Gowers \cite{gow-reg}, and independently R\"odl and Skokan \cite{rs}, proved analogues of Szemer\'edi's regularity lemma for hypergraphs of fixed uniformity.
Green \cite{green} proved a regularity lemma for Boolean functions on Abelian groups; the $\mathbb{Z}_2^n$ case of this states that for any $f:\mathbb{Z}_2^n \to \{0,1\}$, there exists a subgroup $H \leq \mathbb{Z}_2^n$ of bounded index, such that on most cosets $C$ of $H$, the restriction of $f$ to $C$ is `pseuodorandom' in the sense of having small non-trivial Fourier coefficients. Jones \cite[Theorem 2]{jones} (following unpublished work of O'Donnell, Servedio, Tan and Wan) and independently Mossel \cite[Lemma 5.3]{mossel}, proved variants of the $\mathbb{Z}_2^n$-case of Green's regularity lemma, where the subgroup $H$ is of the form $\{x \in \mathbb{Z}_2^n:\ x_i = 0\ \forall i \in S\}$ for some $S \subset [n]$, and the notions of pseudorandomness are somewhat weaker than Green's. Mossel's notion of pseudorandomness is termed `resilience': a function $f:\mathbb{Z}_2^n \to \mathbb{R}$ is said to be {\em $(r,\alpha)$-resilient} if $|\mathbb{E}[f|\{S = z\}]-\mathbb{E}[f]| \leq \alpha$ for all $S \subset [n]$ with $|S| \leq r$ and all $z \in \mathbb{Z}_2^S$. Our notion of pseudorandomness is precisely the analogue of resilience, for Boolean functions on $\binom{[n]}{k}$.\footnote{The notion of pseudorandomness used by Jones~\cite{jones} is stated in terms of the noise operator, and is somewhat stronger than resilience.} 

Our `weak regularity lemma' (Theorem \ref{thm:weak-reg}) is a natural variant of the above results of Jones~\cite{jones} and of Mossel~\cite{mossel}, for Boolean functions on $\binom{[n]}{k}$, where $k/n$ is bounded away from 0 and 1. In fact, the regularity lemma of Jones et al can be generalised straightforwardly to the $p$-biased measure case, and it is not too hard to deduce our weak regularity lemma from this generalisation. Indeed, given a family $\F \subset \binom{[n]}{k}$, one takes $p = k/n + \sqrt{(\log n)/n}$ and applies the aforementioned generalisation to the function
$$f:\{0,1\}^n \to [0,1];\ x \mapsto \begin{cases} \Pr_{T \in \binom{S(x)}{k}} [T \in \F] & \text{ if } |S(x)| \geq k;\\ 0 & \text{ if } |S(x)| < k,\end{cases}$$
where $S(x) = \{i \in [n]:\ x_i=1\}$, and the above probability refers to $T$ being chosen uniformly at random from $\binom{S(x)}{k}$. We give a different, more self-contained proof of Theorem \ref{thm:weak-reg}, one which we believe to be more natural. (We remark that it does not seem possible to deduce Theorem \ref{thm:weak-reg} from the lemma of Mossel, even though his regularity lemma applies to the $p$-biased measure; this is because we require a junta $\langle \G \rangle$ such that {\em all} the slices of $\F$ corresponding to $\G$ are highly pseudorandom.)

We call Theorem \ref{thm:weak-reg} a `weak regularity lemma' because, as with the so-called `weak regularity lemma' of Frieze and Kannan \cite{fk}, our notion of $(\eta,h)$-slice-quasirandomness does not imply a general `counting lemma' for hypergraphs with a fixed number of edges, in the sense of \cite{gow-reg,nrs} (see Remark \ref{remark:counting}). It should be noted, however, that our proof of Theorem \ref{thm:weak-reg} gives $j,n_0 = 2 \uparrow \uparrow 1/(\zeta^{O(h)} \delta^2 \epsilon)$, where for $m >0$, $2 \uparrow \uparrow m$ denotes a tower of twos of height $\lceil m \rceil$. Most known regularity lemmas come with tower-type bounds or worse, and such bounds have been shown to be necessary in many cases by Gowers \cite{gow-nec}, and by Conlon and Fox \cite{cf}. One exception is the aforementioned `weak regularity lemma' of Frieze and Kannan, where the bound on the number of parts is only exponential.

Theorem \ref{thm:weak-reg} can be seen as a `regularity lemma' for hypergraphs of linear uniformity (i.e., uniformity linear in the number of vertices). We remark that the hypergraph regularity lemmas of Gowers \cite{gow-reg} and of R\"odl and Skokan \cite{rs} do not apply to hypergraphs of linear uniformity. The notions of `pseudorandomness' in these lemmas are very different from ours; unlike ours, both notions admit general `counting lemmas' \cite{gow-reg,nrs} giving asymptotic estimates on the number of copies of a hypergraph with a fixed number of edges in a `pseudorandom' hypergraph.

\subsection{A sketch of our methods}
\label{subsec:sketch}
\subsubsection*{Stability for $t$-intersecting families}
We first outline our proof of Theorem \ref{thm:main-ak-stability}. As in several previous works on stability for Erd\H{o}s-Ko-Rado type theorems (e.g.,~\cite{DF09,EKL16+,Friedgut08}), it is more convenient for us to work first with
the \emph{biased measure} on $\p([n])$, rather than with the uniform measure on $\binom{[n]}{k}$. Hence, we first consider $t$-intersecting
families $\F \subset \p([n])$, and seek to maximize their $p$-biased measure $\mu_p(\F)$, defined by
$\mu_p(\F) := \sum_{S \in \F} p^{|S|} (1-p)^{n-|S|}$. The biased version of the AK theorem (presented clearly in \cite{Filmus13}) is as follows.
\begin{theorem}[Biased AK Theorem]
\label{thm:ak-biased}
Let $t \in \mathbb{N}$, let $0 < p < 1/2$, and let $\F \subset \p([n])$ be a $t$-intersecting family. Then $\mu_p(\F) \leq f(n,p,t):= \max_r \mu_p(\F_{n,t,r})$,
where $\F_{n,t,r} := \{S \subset [n]:
|S \cap [t+2r]| \geq t+r\}$, and equality holds iff $\F$ is isomorphic to one of the $\F_{n,t,r}$ families. In particular, if $p < 1/(t+1)$, then $\mu_p(\F) \leq \mu_p(\tilde{\F}_{n,t,0}) = p^t$, with equality iff $\F \cong \F_{n,t,0}$.
\end{theorem}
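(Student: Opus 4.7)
The plan is to deduce the biased AK theorem from the uniform AK theorem (Theorem~\ref{Thm:AK}) via a ``lifting'' argument, in the spirit of the Dinur--Safra trick commonly used to transfer between biased and uniform measures. Given a $t$-intersecting family $\F \subset \p([n])$ and $p \in (0, 1/2)$, I would fix a sequence of integers $N \to \infty$, set $k = k_N := \lfloor pN \rfloor$, and form the lifted family
\[
\F^{(N)} := \left\{T \in \binom{[N]}{k} : T \cap [n] \in \F\right\}.
\]
Since $|T_1 \cap T_2| \geq |(T_1 \cap [n]) \cap (T_2 \cap [n])| \geq t$ for any $T_1, T_2 \in \F^{(N)}$, each lifted family is again $t$-intersecting. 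For each fixed $A \subset [n]$, the hypergeometric probability $\Pr_{T \sim \binom{[N]}{k}}[T \cap [n] = A] = \binom{N-n}{k-|A|}/\binom{N}{k}$ converges to $p^{|A|}(1-p)^{n-|A|}$ as $N \to \infty$, so summing over $A \in \F$ gives $|\F^{(N)}|/\binom{N}{k} \to \mu_p(\F)$.

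Applying the uniform AK theorem to $\F^{(N)}$ yields $|\F^{(N)}|/\binom{N}{k} \leq \max_{r \geq 0} |\F_{N,k,t,r}|/\binom{N}{k}$, and the same hypergeometric-to-binomial computation gives $|\F_{N,k,t,r}|/\binom{N}{k} \to \Pr[\Bin(t+2r, p) \geq t+r] = \mu_p(\F_{n,t,r})$ for each fixed $r$ (valid once $n \geq t+2r$). To interchange the supremum with the limit, I would use the fact that for $p < 1/2$ a standard Chernoff bound gives $\Pr[\Bin(t+2r, p) \geq t+r] \leq e^{-c(p) r}$ for some $c(p) > 0$, so the supremum is attained within a bounded range of $r$ depending only on $p$ and $t$. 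Letting $N \to \infty$ then delivers $\mu_p(\F) \leq \max_r \mu_p(\F_{n,t,r}) = f(n,p,t)$. For the final assertion, a short ratio calculation shows that when $p < 1/(t+1)$ one has $\mu_p(\F_{n,t,r+1}) < \mu_p(\F_{n,t,r})$ for every $r \geq 0$, so the maximum is attained uniquely at $r = 0$ and equals $\mu_p(\s_{[t]}) = p^t$.

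The main obstacle will be the characterisation of equality, which the limit argument above does not yield directly: extremality of $\F$ only forces \emph{asymptotic} extremality of each $\F^{(N)}$, not exact extremality, so the uniqueness clause of the uniform AK theorem does not transfer verbatim. To close this gap, I would first reduce to shifted families by verifying that $(i,j)$-shifting preserves both the $t$-intersection property and the value of $\mu_p$. For a shifted extremal $\F$, the rigid structure of shifted $t$-intersecting families---combined with a stability version of the uniform AK maximum controlling $|\F^{(N)} \triangle \F_{N,k,t,r}|$ in terms of the deficit from extremality, and then pulling this control back to $[n]$ via the lifting---forces $\F$ itself to equal some $\F_{n,t,r}$. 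A subtlety already present in the uniform case is that at the ``singular'' values of $p$, two consecutive values of $r$ attain the maximum simultaneously, giving two non-isomorphic extremal families; the statement accommodates this by permitting $\F$ to be isomorphic to any one of the finitely many extremal $\F_{n,t,r}$.
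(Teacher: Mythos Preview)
The paper does not give its own proof of this theorem; it is stated as a known result and attributed to \cite{Filmus13}. So there is no in-paper argument to compare against, and I will simply evaluate your proposal on its merits.

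Your lifting argument for the \emph{inequality} $\mu_p(\F)\le f(n,p,t)$ is correct and standard: the lifted family $\F^{(N)}$ is $t$-intersecting, the hypergeometric-to-binomial convergence gives $|\F^{(N)}|/\binom{N}{k}\to\mu_p(\F)$ and $|\F_{N,k,t,r}|/\binom{N}{k}\to\mu_p(\F_{n,t,r})$, and the Chernoff bound legitimately truncates the supremum in $r$ to a bounded range, allowing you to pass to the limit. The computation showing that $r=0$ is uniquely optimal when $p<1/(t+1)$ is also fine.

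The \emph{equality} case, however, has a real gap. You correctly observe that asymptotic extremality of $\F^{(N)}$ does not invoke the uniqueness clause of the uniform AK theorem, and you propose to plug this with ``a stability version of the uniform AK maximum controlling $|\F^{(N)}\triangle\F_{N,k,t,r}|$''. The difficulty is that, prior to this paper, stability for the uniform AK theorem was known only in the principal domain $k<n/(t+1)$ (Friedgut \cite{Friedgut08}, and the sharpening in \cite{EKL16+}); for general $p\in(0,1/2)$ no such tool was available. Within the present paper the required stability result is exactly Theorem~\ref{thm:main-ak-stability}, but that is proved via Theorem~\ref{thm:biased-ak-stability}, whose very formulation and proof (through the extremal collection $\e$ and Proposition~\ref{lem:juntas cannot get too close}) presuppose the equality characterisation you are trying to establish. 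So your proposed route is circular in this paper's logical structure, and not independently supported outside it.

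The approach in \cite{Filmus13} avoids this entirely by working directly in the biased setting: one adapts the Ahlswede--Khachatrian shifting proof to $\mu_p$, showing that for an increasing, $n$-compressed, $t$-intersecting family that genuinely depends on coordinate $n$, one can strictly increase $\mu_p$ while preserving the $t$-intersection property. Iterating forces any extremal family to be a junta on boundedly many coordinates, after which a finite check identifies it as some $\F_{n,t,r}$. The paper in fact reproduces exactly these ingredients (Propositions~\ref{Prop:Increasing1} and~\ref{Prop:Increasing2}, Corollary~\ref{cor:Filmus}, Proposition~\ref{lem:juntas cannot get too close}), so you could assemble the equality case from Section~\ref{sec:sub:shifting} without ever touching the uniform theorem.
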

\noindent We prove the following stability version of Theorem \ref{thm:ak-biased}.
\begin{theorem}\label{thm:biased-ak-stability}
For any $t\in\mathbb{N}$ and any $\zeta>0$, there exists $C=C(t,\zeta)>0$ such that the following holds. Let $p\in\left[\zeta,\frac{1}{2}-\zeta\right]$, and let $\epsilon >0$. If $\F \subset \p([n])$ is a $t$-intersecting family such that $\mu_{p}\left(\F\right)\ge f(n,p,t)\left(1-\epsilon\right)$, then there exists a family $\G$ isomorphic to some $\F_{n,t,r}$, such that $\mu_{p}\left(\F\backslash \G \right)\le
C\epsilon^{\log_{1-p}p}$.

\mn In particular, if $\frac{r^*}{t+2r^*-1}+\zeta<p<\frac{r^*+1}{t+2r^*+1}-\zeta$ for some $r^* \in \mathbb{N}$, then
the above holds with $\G \cong \F_{n,t,r^*}$.
\end{theorem}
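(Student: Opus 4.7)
\medskip
\noindent\textbf{Proof plan.} My strategy has three stages: shifting and junta approximation, identification of the correct extremal family $\F_{n,t,r^*}$, and finally a bootstrapping step using an isoperimetric inequality that upgrades a crude stability bound to the sharp exponent $\log_{1-p}p$.

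First, classical left-compression preserves both the $t$-intersecting property and $\mu_p$ (it is size-preserving), so I may assume $\F$ is left-compressed and therefore has convenient monotonicity properties. I then invoke the weak regularity lemma (Theorem~\ref{thm:weak-reg}), or its direct $p$-biased analogue via the translation sketched in subsection~\ref{subsec:reg}, with parameters $\delta,h,\epsilon'$ chosen small depending on $t$ and $\zeta$. This produces a bounded-size coordinate set $J \subset [n]$ and a family $\G_0 \subset \p(J)$ such that $\mu_p(\F \setminus \langle \G_0 \rangle)$ is small, and every slice $\F_J^B$ with $B \in \G_0$ is $(\delta,h)$-slice-quasirandom and of $\mu_p$-density bounded below.

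Next I match the junta to an extremal family. Because $\F$ is $t$-intersecting and each surviving slice is dense and random-like, standard cross-intersection arguments (of the type reviewed in Section~\ref{sec:preliminaries}) force $\G_0$ itself to be essentially $t$-intersecting at the junta level. Applying the biased AK theorem (Theorem~\ref{thm:ak-biased}) to $\G_0$ and using $\mu_p(\F) \geq (1-\epsilon)f(n,p,t)$, I conclude that $\G_0$ coincides, up to a set of small measure, with the $J$-projection of some $\F_{n,t,r^*}$. In the unique-maximiser range $\frac{r^*}{t+2r^*-1}+\zeta < p < \frac{r^*+1}{t+2r^*+1}-\zeta$ the value of $r^*$ is uniquely pinned down; at singular $p$, a brief case analysis selects whichever of the two tied maximisers $\G_0$ more closely resembles. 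This yields a crude stability bound $\mu_p(\F \setminus \G) \leq C_0\epsilon$ for some $\G \cong \F_{n,t,r^*}$.

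Finally, I bootstrap. Set $\delta := \mu_p(\F \setminus \G)$. The aim is a self-improving inequality $\delta \leq C \cdot \epsilon \cdot \delta^{c}$ for a positive $c$ depending on $p$, whose iteration yields the sharp exponent $\log_{1-p}p$. Each $A \in \F \setminus \G$ must $t$-intersect every $A' \in \G \cong \F_{n,t,r^*}$, which forces $A$ to hit a specific collection of "boundary" subcubes determined by the $(t+2r^*)$-pattern structure of $\G$. Combined with slice-quasirandomness, this exhibits a shadow inside $\G$ that $\F$ must exclude, of $p$-biased measure at least a constant times $\delta^{\log_p(1-p)}$. Since $\mu_p(\F) \geq f(n,p,t) - \epsilon$ upper-bounds the total excludable mass, the two inequalities combine into the required self-improvement, and iteration gives $\mu_p(\F \setminus \G) \leq C \epsilon^{\log_{1-p}p}$.

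\medskip
\noindent\textbf{Main obstacle.} The delicate part is the isoperimetric step in the bootstrapping. In the pure $t$-umvirate regime handled in \cite{EKL16+}, $\G$ was a single subcube and a clean $p$-biased edge-isoperimetric inequality on that subcube sufficed. Here $\F_{n,t,r^*}$ is a union of subcubes indexed by the allowed patterns $|A \cap [t+2r^*]| \geq t+r^*$, so the "boundary" is a collection of interacting cross-intersecting pieces. Executing the shadow lower bound therefore requires a cross-intersecting biased Kruskal--Katona-type inequality applied pattern-by-pattern, and it is precisely the left-compression from the first stage that makes this pattern-by-pattern analysis tractable.
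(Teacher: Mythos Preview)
Your plan has the right three-part architecture (junta approximation, identification with an extremal family, bootstrapping to the sharp exponent), but the first step contains a genuine gap and the third is misdescribed.

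\textbf{The compression gap.} You write ``classical left-compression preserves both the $t$-intersecting property and $\mu_p$ \ldots\ so I may assume $\F$ is left-compressed.'' For an \emph{exact} result this reduction is fine, but for a \emph{stability} result it is not: compression can move $\F$ arbitrarily far in symmetric difference while preserving $\mu_p$, so knowing that the compressed family is close to $\F_{n,t,r}$ tells you nothing about the original $\F$. The paper's key idea here, which your plan is missing, is to perform the compression \emph{one $\s_{ij}$-step at a time}, interleaved with measure-increasing modifications (Propositions~\ref{Prop:Increasing1}--\ref{Prop:Increasing2}), each step being a $(c/2)$-small modification for a constant $c=c(t,\zeta)$. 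A separate ``barrier'' corollary (Corollary~\ref{cor:Threshold}), itself a consequence of the bootstrapping lemma, shows that the distance $\mu_p(\cdot\setminus\e)$ cannot cross the threshold $c$ under $(c/2)$-small modifications. The sequence terminates at a genuine junta (via Proposition~\ref{Prop:non-juntas have small minimal influence}), which by a finiteness argument (Proposition~\ref{lem:juntas cannot get too close}) must lie in $\e$; unrolling the small-modification steps then shows the \emph{original} $\F$ satisfies $\mu_p(\F\setminus\e)<c$. Your use of the regularity lemma here is both unnecessary (the shifting already produces a junta) and insufficient (it does not address the passage back to the original family).

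\textbf{The bootstrap.} Your self-improving inequality $\delta \le C\epsilon\delta^{c}$ is not the right shape, and the claim that each $A\in\F\setminus\G$ must $t$-intersect every $A'\in\G$ is false ($\G$ need not be contained in $\F$). The paper's bootstrap (Lemma~\ref{Lemma:Main}) is a direct, one-shot argument: from $\mu_p(\F\cap\G)\ge\mu_p(\G)(1-\epsilon)$ one gets, for every ``good'' pattern $D\in\F_{2r+t,t,r}$, that $\mu_p(\F_{[2r+t]}^{D})\ge 1-O_{t,\zeta}(\epsilon)$; then for each ``bad'' pattern $E\notin\F_{2r+t,t,r}$ there is a good $D$ with $|D\cap E|\le t-1$, so the slices $\F^{D},\F^{E}$ are cross-intersecting, and the biased isoperimetric inequality (Proposition~\ref{lem:Cross Intersecting}) gives $\mu_p(\F^{E})\le O_{t,\zeta}(\epsilon)^{\log_{1-p}p}$ immediately. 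Summing over $E$ finishes; there is no pattern-by-pattern Kruskal--Katona and no role for slice-quasirandomness at this stage. (There \emph{is} a short self-improvement in Corollary~\ref{cor:Bootstrapping}, but of the form $\delta\le C(\epsilon+\delta)^{\alpha}$ with $\alpha=\log_{1-p}p>1$, used only to absorb the initial constant $c$ into the $\epsilon$-dependence.)
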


\noindent Theorem \ref{thm:biased-ak-stability} is tight up to a factor depending only upon $t$ and $\zeta$, as evidenced by the families
\begin{align*}
\tilde{\mathcal{H}}_{n,t,r,s} & =\left\{ A\subset \mathcal{P}\left(\left[n\right]\right)\,:\,|A \cap [t+2r]| \geq t+r,\, A \cap \left\{t+2r+1,\ldots,t+2r+s\right\} \ne\emptyset\right\} \\
 & \cup\left\{ A\subset \mathcal{P}\left(\left[n\right]\right)\,:\,|A \cap [t+2r]|=t+r-1,\,\left\{t+2r+1,\ldots,t+2r+s\right\}\subset A\right\},
\end{align*}
for sufficiently large $n$ and $s$. The computation showing this is presented in section~\ref{sec:sub:tightness}.

\medskip \noindent Theorem~\ref{thm:biased-ak-stability} follows from combination of three ingredients:
\begin{itemize}
\item A \textbf{bootstrapping lemma} showing that if a $t$-intersecting $\F$ is {\it somewhat close} to some
$\F_{n,t,r}$ then it must be {\it very close} to that $\F_{n,t,r}$. More precisely, there exists $c>0$ such that
if $\mu_p(\F \setminus \F_{n,t,r})>c$, and if $\G$ is another $t$-intersecting family which is a {\it small modification}
of $\F$, in the sense that $\mu_p(\F \setminus \G) < c/2$, then $\mu_p(\G \setminus \F_{n,t,r})>c$. Hence,
there is a `barrier' which one cannot cross while making only small modifications.

\item A \textbf{shifting argument} showing that given a $t$-intersecting family $\F$, one can transform it into
a \emph{junta} (i.e., a function that depends on only $O(1)$ coordinates) $\tilde{\F}$ with
$\mu_p(\tilde{\F}) \geq \mu_p(\F)$ by a series of small modifications.

\item An observation that if a $t$-intersecting junta $\tilde{\F}$ satisfies $\mu_p(\tilde{\F}) > f(n,p,t)(1-\epsilon)$ for
a sufficiently small $\epsilon$, then it must be isomorphic to one of the $\F_{n,t,r}$'s.
\end{itemize}
While the shifting part is based on `classical' shifting arguments summarized in~\cite{Filmus13}, the bootstrapping
relies on a recently introduced isoperimetric argument~\cite{EKL16+}. It seems that the combination of
the classical shifting tools with isoperimetry is the main novelty of the proof.

\medskip

Next, we deduce Theorem \ref{thm:main-ak-stability} from Theorem~\ref{thm:biased-ak-stability}. For this, we first use a standard reduction from the $k$-uniform setting to the biased-measure setting to obtain a `weak' stability theorem for $t$-intersecting families of $k$-element sets. Then, we `bootstrap' this weak stability result to obtain Theorem \ref{thm:main-ak-stability}, using an argument recently introduced in \cite{EKL16+}, relying on the Kruskal-Katona theorem and some extremal results on cross-intersecting families.

\subsection*{Our weak regularity lemma}
In common with many other regularity-type results, the proof of our `weak regularity lemma' (Theorem \ref{thm:weak-reg}) uses a potential argument. We define a non-positive potential function $\phi:\ \p(\tbinom{[n]}{k}) \times \p([n]) \to [-1/e,0]$ such that:
\begin{itemize}
\item $\phi(\F,S) \leq \phi(\F,S')$ if $S \subset S'$;
\item If $k/n$ is bounded away from $0$ and $1$, then for any $\F \subset \binom{[n]}{k}$ and any $S\subset \left[n\right]$, {\em either} there exists an $S$-junta $\G \subset \p(S)$ such that $\G$ satisfies the conclusion of Theorem \ref{thm:weak-reg} (with $J = S$), {\em or} there exists a set $S' \supset S$ that is not much larger than $S$, such that $\phi\left(\f,S'\right)$
is significantly larger than $\phi\left(\f,S\right)$.
\end{itemize}

Using our potential function $\phi$, we prove the existence of a junta satisfying the conclusion of Theorem \ref{thm:weak-reg}, as follows. We start by setting $S = \emptyset$. By the second property of $\phi$, either there exists an $S$-junta $\G_S \subset \p(S)$ such that $\G$ satisfies the conclusion of Theorem \ref{thm:weak-reg} (with $J = S$), or else there exists a set $S' \supset S$ that is not much larger than $S$, such that $\phi\left(\f,S'\right)$
is significantly larger than $\phi\left(\f,S\right)$. In the former case, we are done; in the latter case, we replace $S$ by $S'$ and repeat. Since $\phi$ is bounded from above by $0$, the former case must occur after a bounded number of steps.

Roughly speaking, our proof yields a sequence of juntas (depending on successively larger, nested sets of coordinates) approximately containing $\F$. At each stage, we have a set of coordinates $S$ and an $S$-junta $\J_S$ approximately containing $\F$. If a significant part of the mass of $\F$ lies in subcubes (determined by $S$) on which $\F$ is not sufficiently random-like, then $\phi$ may be increased by a significant amount. When $\phi$ stops increasing by a significant amount, we have our required junta.

The function $\phi$ is entropy-related, so loosely speaking, our method can be viewed as an `entropy-increment' strategy, as opposed to the more common `energy-increment' strategy. The former turns out to be slightly cleaner in our case, but the exact choice of $\phi$ is not particularly important, as we do not seek to optimise $j=j(\zeta,\delta,h,\epsilon)$ in the conclusion of Theorem \ref{thm:weak-reg}.

\subsection*{Families with a forbidden intersection: a stability result and an exact result}
Given a family $\F \subset \binom{[n]}{k}$ that contains no two sets with intersection of size $t-1$ (where $k/n$ is bounded away from $0$ and $1$), we first apply Theorem \ref{thm:weak-reg} to find a junta $\J = \langle \G \rangle$ such that $\F$ is approximately contained within $\J$, and for each $B \in \G$, the family $\F_J^B$ is highly slice-quasirandom and not too small. Next, we prove a lemma about pairs of slice-quasirandom families (Lemma \ref{lem:qrc}): if $k/n$ and $l/n$ are bounded away from $0$ and $1/2$, then for any pair $\A \subset \binom{[n]}{k}$ and $\B \subset \binom{[n]}{l}$ of highly slice-quasirandom families which are not too small, and for any fixed $t \in \mathbb{N}$, there exist $A \in \A$ and $B \in \B$ such that $|A \cap B| = t-1$. In other words, slice-quasirandomness allows one to achieve any fixed intersection-size. (We prove this by using the slice-quasirandomness property to reduce to the case $t=1$, i.e.\ to a statement about cross-intersecting families; this can be tackled using known techniques, again translating from the uniform setting to the biased-measure setting.)

We use Lemma \ref{lem:qrc} to show that the junta $\J$ must in fact be $t$-intersecting, so $\F$ is approximately contained within a $t$-intersecting family (namely, $\J$). It follows that if $\F$ has size close to $f(n,k,t)$, then the $t$-intersecting family $\J$ also has size close to $f(n,k,t)$. We then apply Theorem \ref{thm:main-ak-stability} to deduce that $\J$ (and therefore $\F$) is close in symmetric difference to one of the families $\F_{n,k,t,r}$, where $r$ is bounded.

To summarize, the above argument yields a stability result for families with a forbidden intersection: if $k/n$ is bounded away from $0$ and $1/2$, and $\F \subset \binom{[n]}{k}$ is a family such that no two sets in $\F$ have intersection of size $t-1$ and $|\F|$ is close to $f(n,k,t)$, then $\F$ has small symmetric difference with some $\F_{n,k,t,r}$, where $r$ is bounded. Finally, we can show that if $\F$ is a small perturbation of one of the families $\F_{n,k,t,r}$ (where $r$ is bounded), then $\F$ is smaller than $\F_{n,k,t,r}$, hence deducing Theorem \ref{thm:main-result-es} from our stability result.

\section{Prior results and techniques}
\label{sec:preliminaries}

Our proofs use several previous results and techniques from extremal and probabilistic combinatorics.
In this section we present the notation, definitions, and previous results and techniques that will be used in the
sequel. As some of the results were not proved in the form we use them, we present their proofs here for sake of
completeness. The reader may find it helpful to look through this section briefly at first, and then go back to specific
results when they are used in the sequel.

\subsection{Notation}
\label{sec:sub:notations}

If $\F \subset \p([n])$, then we write $\bar{\F} := \{[n] \setminus A:\ A \in \F\}$, and we write $\F^* := \p([n]) \setminus \bar{\F}$ for the {\em dual} of $\F$. For each $k \in [n] \cup \{0\}$, the {\em $k$th layer} of $\F$ is $\F \cap \binom{[n]}{k}=:\F^{(k)}$. If $\F \subset \p([n])$, we define the {\em increasing family generated by} $\F$ to be
\[\F^{\uparrow} = \{A \subset [n]:\ \exists B \in \F \textrm{ with } B \subset A\},\]
i.e.\ it is the minimal increasing family that contains $\F$.

\mn In sections \ref{sec:preliminaries}-\ref{sec:proof}, we will often regard $p \in (0,1)$ as fixed, and we will sometimes suppress it from our notation.

\mn For fixed $(n,p,t)$ with $n,t \in \mathbb{N}$ and $0 < p < 1/2$, $\e$ will denote the collection of extremal families for the biased AK theorem corresponding to $(n,p,t)$ (i.e., the collection of all $t$-intersecting $\F$ with $\mu_p(\F)=f(n,p,t)$). Note that for any $\A \in \e$ and any $p \in [\zeta,1/2-\zeta]$,
we have $\mu_p(\A) = \Theta_{t,\zeta}(1)$.

\mn For any $\F$, we denote by $\mu_p(\F \setminus \e)$ the `minimal distance' $\min_{\G \in \e} \mu_p(\F \setminus \G)$.

\mn For $k \in [n]$, $(\e)^{(k)}$ denotes the $k$th layers of elements of $\e$, i.e., $\{\A \cap \binom{[n]}{k}: \A \in \e\}$.
For $\F \subset \binom{[n]}{k}$, we denote by $|\F \setminus (\e)^{(k)}|$ the `minimal distance' $\min_{\G \in (\e)^{(k)}} |\F \setminus \G|$.

\mn For $B \subset J \subset [n]$, we define
$\F_{J}^{B}:=\left\{ A\subset \left[n\right]\backslash J\,:\, A\cup B\in\F\right\}$.

\mn For a fixed $c>0$, we say that $\G$ is a \emph{$c$-small modification} of $\F$ if
$\mu_p(\F \setminus \G) \leq c$.

\mn A {\em uniform family} is a subset of $\binom{[n]}{k}$ for some $k \in [n] \cup \{0\}$. When $k$ is understood, we will sometimes write $\mu$ for the uniform measure on $\binom{[n]}{k}$, defined by
$$\mu(\A) = \frac{|\A|}{\binom{n}{k}}\quad \forall \A \subset \binom{[n]}{k}.$$

\mn The \emph{lower shadow} of a uniform family $\A \subset \binom{[n]}{k}$ is defined as $\partial(\A) := \{B \in \binom{[n]}{k-1}: \exists A \in \A \text{ with }B \subset A\}$.

\mn All logs in this paper are to the base $e$, unless otherwise indicated by a subscript, e.g.\ $\log_b$.

\mn We use the (now standard) `asymptotic notation', as follows. If $f = f(x)$ and $g = g(x)$ are non-negative functions, we write $f = O(g)$ if there exists $C>0$ such that $f(x) \leq Cg(x)$ for all $x$. We write $f = \Omega(g)$ if there exists $c>0$ such that $f(x) \geq cg(x)$ for all $x$, and we write $f = \Theta(g)$ if $f = O(g)$ and $f = \Omega(g)$. If $f = f(x; \alpha)$ and $g = g(x; \alpha)$ are non-negative functions, then we write $f = O_{\alpha} (g)$ if for all $\alpha$, there exists $C=C(\alpha)>0$ such that $f(x;\alpha) \leq Cg(x;\alpha)$ for all $x$. Similarly, we use the notation $f = \Omega_{\alpha}(g)$ and $f = \Theta_{\alpha}(g)$. (Here, we view $\alpha$ as a parameter; note that $\alpha$ may be vector-valued.)

\subsection{The $p$-biased measure, influences and juntas}
\label{sec:sub:influences}

Let $0 < p < 1$ and let $n \in \mathbb{N}$. The {\em $p$-biased measure} on $\p([n])$ is defined by
\[
\mu_{p}(\F)=\sum_{A\in\f}p^{\left|A\right|}\left(1-p\right)^{n-\left|A\right|} \quad \forall \F \subset \pn.
\]

\begin{definition}
Let $\F \subset \p([n])$ and $i \in [n]$. The {\em set of $i$-influential elements with respect to $\F$} is
\[
\I_i(\F) := \{S: |\{S,S \triangle \{i\} \} \cap \F| = 1\}.
\]
The {\em influence of the $i$th coordinate on $\F$ with respect to $\mu_p$} is $I^p_i(\F) := \mu_p(\I_i(\F))$. The
{\em total influence of $\F$ with respect to $\mu_p$} is $I^p(\F) := \sum_{i=1}^n I_i(\F)$. When there is no risk of confusion, we will often suppress $p$ from this notation, writing $I_i(\F) = I^p_i(\F)$ and $I(\F) = I^p(\F)$.
\end{definition}

Influences play an important role in a variety of applications in combinatorics, theoretical computer science, mathematical physics and social choice theory (see e.g.\ the survey~\cite{Kalai-Safra}).

We need the following well-known isoperimetric inequality for total influence w.r.t. $\mu_p$; this appears for example in \cite{Kahn-Kalai}.
\begin{theorem}
\label{thm:skewed-iso}
Let $\F \subset \p([n])$ be an increasing family, and let $0 < p <1$. Then
\begin{equation} \label{eq:skewed-iso} pI^p(\F) \geq \mu_p(\F) \log_p (\mu_p(\F)). \end{equation}
\end{theorem}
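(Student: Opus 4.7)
The plan is to prove the bound by induction on $n$, splitting the family at a distinguished coordinate. The base case $n = 0$ is immediate, since then $\F \in \{\emptyset, \{\emptyset\}\}$ forces both sides to vanish (with the usual convention $0 \cdot \log 0 = 0$).

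For the inductive step I would single out coordinate $n$ and set
\[
\F_0 := \{S \subset [n-1] : S \in \F\}, \qquad \F_1 := \{S \subset [n-1] : S \cup \{n\} \in \F\},
\]
which are both increasing subfamilies of $\p([n-1])$, with $\F_0 \subset \F_1$ by monotonicity. Writing $a := \mu_p(\F_0)$ and $b := \mu_p(\F_1)$ (so $a \le b$), a direct computation of influences coordinate-by-coordinate yields
\[
\mu_p(\F) = (1-p)a + pb, \qquad I^p(\F) = (b - a) + (1-p)\, I^p(\F_0) + p\, I^p(\F_1),
\]
where on the right $I^p$ refers to total influence on $\p([n-1])$; the term $(b-a)$ is exactly $I^p_n(\F) = \mu_p(\F_1 \setminus \F_0)$. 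Multiplying by $p$ and applying the inductive hypothesis to $\F_0$ and $\F_1$ reduces the claim to the one-variable inequality
\[
p(b - a) + (1-p)\,\phi(a) + p\,\phi(b) \;\ge\; \phi\bigl((1-p)a + pb\bigr),
\]
where $\phi(x) := x \log_p x$, to be verified for all $0 \le a \le b \le 1$.

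To finish, I would fix $a$ and consider $h(b) := \phi\bigl((1-p)a + pb\bigr) - (1-p)\phi(a) - p\phi(b) - p(b - a)$ on $[a, 1]$; the task is to show $h \le 0$. One has $h(a) = 0$, and with $c := (1-p)a + pb$ the identity $\phi'(x) = \log_p x + 1/\ln p$ gives $h'(b) = p\bigl(\log_p(c/b) - 1\bigr)$. Since $0 \le a \le b$, the ratio $c/b$ lies in $[p, 1]$, and $\log_p$ being decreasing on $(0,1]$ with $\log_p p = 1$ forces $\log_p(c/b) \le 1$; hence $h'(b) \le 0$ and integration closes the induction.

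The subtle point, rather than a genuine obstacle, is that $\phi$ is \emph{concave} (since $\ln p < 0$), so Jensen's inequality points the wrong way; the argument has to exploit the extra influence contribution $p(b - a)$ coming from the split coordinate to compensate for this. The identity $\log_p p = 1$ is exactly what makes the derivative test tight, which in turn explains why the factor $p$ on the left of \eqref{eq:skewed-iso} is the correct one.
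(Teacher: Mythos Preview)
Your inductive argument is correct: the decomposition of $\mu_p(\F)$ and $I^p(\F)$ along coordinate $n$ is right, and the one-variable inequality for $\phi(x)=x\log_p x$ is handled cleanly via the derivative computation (the only trivial omission is the case $b=0$, which forces $a=0$ and makes everything vanish). The paper does not actually prove this statement; it simply quotes it as a well-known isoperimetric inequality, citing Kahn--Kalai. Your self-contained induction is a standard proof of this fact and is perfectly acceptable here.
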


We also need the well-known Margulis-Russo lemma (due independently to Margulis \cite{margulis} and Russo \cite{russo}), which relates the total influence of an increasing family $\F$ to the derivative of the function $p \mapsto \mu_p(\F)$.

\begin{lemma}[Margulis-Russo Lemma]
Let $\f \subset \pn$ be an increasing family and let $0 < p_0 < 1$. Then
\[
\frac{\textup{d} \mu_{p}\left(\f\right)}{\textup{d}p} \Big|_{p=p_0} = I^{p_0}\left(\f\right).
\]
\end{lemma}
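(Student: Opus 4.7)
The plan is a direct computation from the definition, followed by a pairing argument that turns the derivative into a sum of coordinate-wise influences. Start by differentiating term-by-term:
\[
\frac{\textup{d}\mu_p(\F)}{\textup{d}p} = \sum_{A\in\F}\Bigl[|A|p^{|A|-1}(1-p)^{n-|A|}-(n-|A|)p^{|A|}(1-p)^{n-|A|-1}\Bigr].
\]
Write $|A|=\sum_{i=1}^n\mathbf{1}[i\in A]$ and $n-|A|=\sum_{i=1}^n\mathbf{1}[i\notin A]$, interchange the two sums, and for each coordinate $i$ split $\F$ into the sets containing $i$ and the sets not containing $i$. In the first sub-sum, reindex by writing $A=A'\cup\{i\}$ with $A'\subset[n]\setminus\{i\}$. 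The $i$-th coordinate's contribution then becomes
\[
\sum_{A\subset[n]\setminus\{i\}}\Bigl(\mathbf{1}[A\cup\{i\}\in\F]-\mathbf{1}[A\in\F]\Bigr)\,p^{|A|}(1-p)^{n-|A|-1}.
\]

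Next I would invoke monotonicity: since $\F$ is increasing, $A\in\F$ implies $A\cup\{i\}\in\F$, so the bracketed difference is simply $\mathbf{1}[A\cup\{i\}\in\F,\ A\notin\F]$, which is the indicator that the unordered pair $\{A,A\cup\{i\}\}$ lies in $\I_i(\F)$. The $p$-biased mass of such a pair is
\[
p^{|A|}(1-p)^{n-|A|}+p^{|A|+1}(1-p)^{n-|A|-1}=p^{|A|}(1-p)^{n-|A|-1},
\]
which is exactly the weight appearing in the displayed sum. Hence the $i$-th contribution equals $\mu_p(\I_i(\F))=I_i^p(\F)$, and summing over $i\in[n]$ gives $I^p(\F)$, as required.

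There is essentially no obstacle here: the only care needed is bookkeeping in the reindexing step, together with the observation that for increasing $\F$ the two measures $p^{|A|}(1-p)^{n-|A|}$ (for $A$) and $p^{|A|+1}(1-p)^{n-|A|-1}$ (for $A\cup\{i\}$) combine to give precisely the weight produced by the derivative. Monotonicity is used only to identify the signed difference with an indicator, so the argument collapses the derivative cleanly onto $\sum_i I_i^p(\F)$.
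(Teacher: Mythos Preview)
Your argument is correct: the term-by-term differentiation, the reindexing $A = A' \cup \{i\}$, and the identification of the weight $p^{|A|}(1-p)^{n-|A|-1}$ with the total $\mu_p$-mass of the influential pair $\{A, A \cup \{i\}\}$ all go through exactly as you describe. Monotonicity is used precisely where you say, to turn the signed difference $\mathbf{1}[A\cup\{i\}\in\F]-\mathbf{1}[A\in\F]$ into a $\{0,1\}$-valued indicator.

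As for comparison with the paper: there is nothing to compare. The paper does not prove the Margulis--Russo lemma; it simply states it as a well-known result, citing Margulis and Russo. Your proof is the standard direct computation and is entirely self-contained, so in this sense you have supplied more than the paper does.
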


\noindent We will use the following consequence of the Margulis-Russo Lemma and Theorem \ref{thm:skewed-iso}
(this appears e.g. in \cite{Grimmett}, Theorem 2.38).
\begin{lemma}
\label{lemma:mono-decreasing}
Let $\F \subset \p([n])$ be increasing. Then the function $p \mapsto \log_p(\mu_p(\F))$ is monotone non-increasing on $(0,1)$.
\end{lemma}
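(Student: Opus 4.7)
The plan is to show that the derivative of $h(p) := \log_p(\mu_p(\F)) = \frac{\log \mu_p(\F)}{\log p}$ is non-positive on $(0,1)$, by combining the Margulis-Russo lemma with the isoperimetric inequality \eqref{eq:skewed-iso}. We may assume $\mu_p(\F) \in (0,1)$ throughout, since the cases $\F = \emptyset$ and $\F = \p([n])$ are trivial (giving $h \equiv +\infty$ or $h \equiv 0$).

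First, I would compute $h'(p)$ using the quotient rule. By the Margulis-Russo lemma, $\frac{\ud}{\ud p}\mu_p(\F) = I^p(\F)$, so
\[
h'(p) = \frac{\frac{I^p(\F)}{\mu_p(\F)} \log p - \frac{1}{p}\log \mu_p(\F)}{(\log p)^2}.
\]
Since the denominator is non-negative, the sign of $h'(p)$ is the sign of the numerator. Multiplying through by $p > 0$, it suffices to show
\[
\frac{p I^p(\F)}{\mu_p(\F)} \log p \;\leq\; \log \mu_p(\F).
\]

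Next, I would invoke the isoperimetric inequality of Theorem~\ref{thm:skewed-iso}, which states $p I^p(\F) \geq \mu_p(\F) \log_p(\mu_p(\F)) = \mu_p(\F) \cdot \frac{\log \mu_p(\F)}{\log p}$. Dividing by $\mu_p(\F) > 0$ yields
\[
\frac{p I^p(\F)}{\mu_p(\F)} \;\geq\; \frac{\log \mu_p(\F)}{\log p}.
\]
Now comes the key step, which is essentially a matter of tracking signs: for $p \in (0,1)$ we have $\log p < 0$, so multiplying both sides of the displayed inequality by $\log p$ reverses the direction and gives exactly
\[
\frac{p I^p(\F)}{\mu_p(\F)} \log p \;\leq\; \log \mu_p(\F),
\]
which is the required bound on the numerator of $h'(p)$. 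Hence $h'(p) \leq 0$ on $(0,1)$, completing the proof.

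There is no real obstacle here; the only subtlety is the sign flip when multiplying by $\log p < 0$, and the fact that Theorem~\ref{thm:skewed-iso} is stated for increasing $\F$ (which is exactly our hypothesis) and that Margulis-Russo applies in the same setting. For a fully rigorous statement one should note that $p \mapsto \mu_p(\F)$ is a polynomial in $p$, so $h$ is smooth on $(0,1)$ wherever $\mu_p(\F) > 0$, and monotonicity on the entire interval then follows from the sign of the derivative.
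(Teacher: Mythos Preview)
Your proof is correct and follows essentially the same approach as the paper: compute the derivative of $p \mapsto \log_p(\mu_p(\F))$ via the quotient rule, apply the Margulis--Russo lemma to identify $\frac{\ud}{\ud p}\mu_p(\F) = I^p(\F)$, and conclude non-positivity of the derivative from the isoperimetric inequality~\eqref{eq:skewed-iso}. Your version is simply more explicit about the sign-tracking and the trivial edge cases.
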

\begin{proof}
Let $f(p):= \log_p (\mu_p(\F)) = \log \mu_p(\F) / \log p$. We have
\begin{equation}\label{eq:derivative}
\frac{\textup{d}f}{\textup{d}p} = \frac{\frac{1}{\mu_p(\F)} \frac{\textup{d} \mu_p(\F)}{\textup{d} p} \log p - \frac{1}{p}\log (\mu_p(\F)) }{(\log p)^2}
= \frac{\frac{1}{\mu_p(\F)} I^p(\F)\log p - \frac{1}{p}\log (\mu_p(\F)) }{(\log p)^2} \leq 0,
\end{equation}
using the Margulis-Russo Lemma and (\ref{eq:skewed-iso}).
\end{proof}

\begin{definition}
If $J \subset [n]$ and $\F \subset \p([n])$, $\F$ is said to be a {\em $J$-junta} if it depends only upon the coordinates in $J$ --- formally, if there exists $\mathcal{G} \subset \p(J)$ such that $S \in \F$ if and only if $S \cap J \in \G$, for all $S \subset [n]$. A family $\F \subset \p([n])$ is said to be a {\em $j$-junta} if it is a $J$-junta for some $J \in \binom{[n]}{j}$. Note that $\F \subset \p([n])$ is a $j$-junta if and only if $I_i(\F)=0$ for at least $n-j$ coordinates $i \in [n]$ (provided $0 < p < 1$).
\end{definition}

Friedgut's junta theorem \cite{Friedgut98} states that a family $\F \subset \p([n])$ with bounded total influence w.r.t.\ the $\mu_{p}$ measure can be approximated by a junta depending upon a bounded number of coordinates.
\begin{theorem}[Friedgut's Junta Theorem]
\label{thm:fj}
For any $\zeta>0$, there exists $C = C(\zeta)>0$ such that the following holds.
Let $\zeta \leq p \leq 1-\zeta$, let $\epsilon >0$, and let $\F\subset \pn$. Then there exists a $j$-junta
$\j \subset \pn$ such that $\mu_{p}\left(\f\Delta\j\right)<\epsilon$, where $j \leq \exp(C I^p(\F) / \epsilon)$.
\end{theorem}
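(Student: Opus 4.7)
The plan is to use $p$-biased Fourier analysis on $\{0,1\}^n$. Write $f := \mathbf{1}_\F = \sum_{S\subseteq[n]}\hat{f}(S)\chi_S$ in the orthonormal $p$-biased Walsh basis; the crucial identity is $I^p(f) = c_p\sum_S |S|\hat{f}(S)^2$ for a constant $c_p$ depending only on $p$, so the hypothesis says that $f$ carries little Fourier mass at high levels.

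I would proceed in three steps. First, \emph{degree truncation}: set $d := \lceil 4I^p(f)/\epsilon\rceil$, so that Parseval and the identity above give $\|f - f^{\leq d}\|_2^2 \leq I^p(f)/(c_p d) \leq \epsilon/(4c_p)$. Second, \emph{extracting the influential coordinates}: for a threshold $\tau>0$ to be fixed, put $J := \{i\in[n] : I_i^p(f)\geq\tau\}$, so $|J|\leq I^p(f)/\tau$, and define the candidate junta approximation $g := \sum_{S\subseteq J,\,|S|\leq d}\hat{f}(S)\chi_S$, which depends only on the coordinates in $J$. Third, \emph{rounding}: at the end, take $\J := \{S\subseteq[n] : g(\mathbf{1}_S)\geq 1/2\}$, which is automatically a $J$-junta and satisfies $\mu_p(\F\triangle\J) \leq 4\|f-g\|_2^2$ by Boolean-valuedness of $f$.

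The heart of the proof is controlling $\|f^{\leq d} - g\|_2^2 = \sum_{|S|\leq d,\,S\not\subseteq J}\hat{f}(S)^2$ by $\epsilon/4$. For each $i\notin J$, I would invoke $p$-biased hypercontractivity (Bonami-Beckner for $p=1/2$, Oleszkiewicz in general): there exist $\rho=\rho(\zeta)\in(0,1)$ and $q := 1+\rho^2\in(1,2)$ such that $T_\rho : L^q(\mu_p)\to L^2(\mu_p)$ is a contraction. Applied to the discrete derivative $\partial_i f$ (whose Fourier coefficients satisfy $\widehat{\partial_i f}(T)\propto \hat{f}(T\cup\{i\})$), together with the Boolean bound $|\partial_i f|\leq 1$ giving $\|\partial_i f\|_q^q\lesssim I_i^p(f)$, this produces
$$\sum_{S\ni i,\,|S|\leq d}\hat{f}(S)^2 \;\lesssim\; \rho^{-2d}\, I_i^p(f)^{2/q} \;=\; \rho^{-2d}\, I_i^p(f)^{1+\delta},\qquad \delta = \delta(\zeta)>0.$$
Summing over $i\notin J$ and using $I_i^p(f)<\tau$ on that set yields $\sum_{|S|\leq d,\,S\not\subseteq J}\hat{f}(S)^2 \lesssim \rho^{-2d}\tau^{\delta}I^p(f)$. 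Choosing $\tau$ so that $\tau^{\delta}\rho^{-2d} \asymp \epsilon/I^p(f)$ makes this at most $\epsilon/4$, and combined with $d = O(I^p(f)/\epsilon)$ then gives $|J|\leq I^p(f)/\tau \leq \exp(C(\zeta)\,I^p(f)/\epsilon)$, as required.

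The main obstacle is the hypercontractive step: the exponent $\delta = 2/(1+\rho^2)-1$ must be strictly positive and bounded away from $0$, which is exactly why the theorem insists on $p\in[\zeta,1-\zeta]$; as $p\to 0$ or $p\to 1$, one has $\rho\to 1$, $\delta\to 0$, and the argument collapses. The remaining bookkeeping—the normalization constants in $\widehat{\partial_i f}$, the double-counting in the double sum $\sum_{i\notin J}\sum_{S\ni i}$ (which at worst inflates the bound by a factor of $d$, absorbable into $C(\zeta)$), and the Markov-type rounding $\mu_p(\{f\neq\mathbf{1}_\J\})\leq 4\|f-g\|_2^2$—is routine.
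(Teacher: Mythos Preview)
The paper does not actually prove this theorem: it is quoted as a known result of Friedgut \cite{Friedgut98}, with the one-line remark that ``Friedgut's proof of Theorem \ref{thm:fj} is based upon the Fourier-analytic proof of the Kahn-Kalai-Linial theorem \cite{KKL}.'' Your proposal is precisely an outline of that original KKL-style argument --- $p$-biased Fourier expansion, degree truncation via the identity $I^p(f)=c_p\sum_S|S|\hat f(S)^2$, hypercontractivity applied to the derivatives $\partial_i f$ to show that coordinates of small influence carry little low-degree weight, and rounding --- so it matches what the paper alludes to rather than offering an alternative route.

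One minor quibble: your explanation of why the bound degenerates at $p\to 0,1$ is slightly misphrased. What actually fails is that the $p$-biased hypercontractive constant (equivalently, the ``small-set expansion'' exponent) degenerates, so the gain $I_i^p(f)^{1+\delta}$ over $I_i^p(f)$ disappears; whether one phrases this as $\rho\to 1$ or $\rho\to 0$ depends on the normalisation, but the substance of your point is right. The rest of the bookkeeping (the factor-of-$d$ overcount when summing over $i\notin J$, the rounding bound $\mu_p(\F\triangle\J)\le 4\|f-g\|_2^2$) is handled correctly.
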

Friedgut's proof of Theorem \ref{thm:fj} is based upon the Fourier-analytic proof
of the Kahn-Kalai-Linial theorem \cite{KKL}.

We will also need another, simpler relation between influences and juntas: if all influences of an increasing family
are large, then the family must be a junta.
\begin{proposition}\label{Prop:non-juntas have small minimal influence}
Let $\F \subset \p([n])$ be increasing, let $0 < p < 1$ and let $c>0$. If $\min_i(I^p_i(\F)) \geq c$, then $n\le\frac{1}{c^{2}p\left(1-p\right)}$.
\end{proposition}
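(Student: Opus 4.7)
The plan is to pass to the $p$-biased Fourier expansion of the indicator function $f$ of $\F$, viewed as an element of $L^{2}(\mu_{p})$. For each $i\in[n]$ define the degree-one character $\chi_{i}(x):=(x_{i}-p)/\sqrt{p(1-p)}$; since under $\mu_{p}$ the coordinates $x_{i}$ are independent with mean $p$ and variance $p(1-p)$, one has $\mathbb{E}_{\mu_{p}}[\chi_{i}\chi_{j}]=\delta_{ij}$, so $\{\chi_{i}\}_{i=1}^{n}$ is an orthonormal system. Bessel's inequality then gives
\[
\sum_{i=1}^{n}\langle f,\chi_{i}\rangle^{2}\le\|f\|_{2}^{2}=\mathbb{E}_{\mu_{p}}[f^{2}]=\mu_{p}(\F)\le 1.
\]

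The main step is to identify the degree-one coefficients with scaled influences, namely $\langle f,\chi_{i}\rangle=\sqrt{p(1-p)}\cdot I_{i}^{p}(\F)$, where the increasing hypothesis on $\F$ is exactly what is used. This follows from a short conditional-expectation computation: splitting on $x_{i}\in\{0,1\}$ yields $\langle f,\chi_{i}\rangle=\sqrt{p(1-p)}\bigl(\mathbb{E}_{\mu_{p}}[f\mid x_{i}=1]-\mathbb{E}_{\mu_{p}}[f\mid x_{i}=0]\bigr)$, and for an increasing family the difference of conditional expectations equals the $\mu_{p}$-mass of the $i$-influential set $\I_{i}(\F)$, since for such $\F$ this set coincides with $\{S:f(S\cup\{i\})>f(S\setminus\{i\})\}$. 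Substituting the resulting identity back into Bessel's inequality gives $p(1-p)\sum_{i}I_{i}^{p}(\F)^{2}\le 1$, and combining with the hypothesis $\min_{i}I_{i}^{p}(\F)\ge c$ produces $np(1-p)c^{2}\le 1$, i.e.\ the claimed bound $n\le 1/(c^{2}p(1-p))$.

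I do not anticipate any substantive obstacle: once the identification of degree-one Fourier coefficients with scaled influences is in place, the conclusion is essentially one line of Bessel. The only point requiring care is the monotonicity step that turns the conditional-expectation difference into $I_{i}^{p}(\F)$; without the increasing hypothesis one only obtains an inequality (giving a weaker bound on $\langle f,\chi_{i}\rangle^{2}$), so the proposition is genuinely a statement about monotone families.
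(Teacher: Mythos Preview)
Your proof is correct and uses essentially the same approach as the paper: identify the degree-one Fourier coefficients with $\sqrt{p(1-p)}\,I_i^p(\F)$ (using monotonicity), then bound $\sum_i \hat f(\{i\})^2$ by Parseval/Bessel. The only difference is that the paper routes through the total influence via Cauchy--Schwarz, obtaining $cn \le I^p(\F) \le \sqrt{n/(p(1-p))}$, whereas you go directly from $\sum_i \hat f(\{i\})^2 \le 1$ to $np(1-p)c^2 \le 1$; your version is slightly more streamlined but the content is the same.
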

\begin{proof}
Let $f=\sum_{S \subset [n]} \hat f(S) \chi_S$ be the Fourier expansion of the characteristic function $f=1_{\F}$,
with respect to the measure $\mu_p$. An easy calculation presented in~\cite[Proposition~8.45]{O'Donnell14} shows
that
\[
I^p\left(\F\right)=\sum_{i=1}^{n}\frac{\hat{f}\left(\{i\}\right)}{\sqrt{p\left(1-p\right)}}.
\]
By the Cauchy-Schwarz inequality and Parseval's identity,
\[
I^p\left(\F\right)=\sum_{i=1}^{n}\frac{\hat{f}\left(\{i\}\right)}{\sqrt{p\left(1-p\right)}} \le \frac{\sqrt{n}\sqrt{\sum_{i=1}^{n}\hat{f}\left(\{i\}\right)^{2}}}{\sqrt{p\left(1-p\right)}}
\le \sqrt{\frac{n}{p\left(1-p\right)}}\left\Vert f\right\Vert _{2} \le
\sqrt{\frac{n}{p\left(1-p\right)}}.
\]
If $\min_i(I^p_i(\F)) \geq c$, we have $cn \leq I^p(\F) \leq \sqrt{\frac{n}{p\left(1-p\right)}}$. Rearranging
yields the assertion.
\end{proof}

\medskip \noindent Another observation we use is that the measure of a $t$-intersecting junta $\F$ cannot be
`too close' to $f(n,p,t)$, unless $\F$ is isomorphic to one of the $\F_{n,t,r}$ families (which are, of course, juntas).
\begin{proposition}\label{lem:juntas cannot get too close}
Let $t \in \mathbb{N}$, let $\zeta >0$, and let $m \in \mathbb{N}$. There exists $\epsilon=\epsilon\left(t,\zeta,m\right)>0$
such that if $0 < p \leq 1/2-\zeta$, and $\F \subset \p\left(\left[m\right]\right)$ is a $t$-intersecting family satisfying $\mu_{p}\left(\F\right) \geq
f(m,p,t)\left(1-\epsilon\right)$, then $\F \in \mathbb{AK}_{p,t}^{m}$.
\end{proposition}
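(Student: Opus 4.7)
The plan is to prove Proposition \ref{lem:juntas cannot get too close} by a finiteness-and-compactness argument, exploiting two basic facts: (i) the collection of $t$-intersecting families $\F \subset \p([m])$ is finite (of cardinality at most $2^{2^m}$, since $\p(\p([m]))$ is finite); and (ii) for each such $\F$, the function $p \mapsto \mu_p(\F) = \sum_{A \in \F} p^{|A|}(1-p)^{m-|A|}$ is a polynomial in $p$, hence continuous, and $p \mapsto f(m,p,t) = \max_{r \geq 0} \mu_p(\F_{m,t,r})$ is continuous as the maximum of finitely many polynomials.

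I would argue by contradiction. Assume no such $\epsilon$ exists, so that for each $n \in \mathbb{N}$ there is a pair $(p_n, \F_n)$ with $p_n \in (0, 1/2 - \zeta]$, $\F_n \subset \p([m])$ a $t$-intersecting family, $\mu_{p_n}(\F_n) \geq f(m, p_n, t)(1 - 1/n)$, and $\F_n \notin \mathbb{AK}_{p_n, t}^m$. Since the set of $t$-intersecting families in $\p([m])$ is finite, I can pass to a subsequence along which $\F_n = \F$ is constant. Applying the Bolzano--Weierstrass theorem to the compact set $[0, 1/2 - \zeta]$, I pass to a further subsequence with $p_n \to p_*$ for some $p_* \in [0, 1/2 - \zeta]$.

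Taking $n \to \infty$ and using continuity, one obtains $\mu_{p_*}(\F) \geq f(m, p_*, t)$, which together with the biased Ahlswede--Khachatrian upper bound from Theorem \ref{thm:ak-biased} forces $\mu_{p_*}(\F) = f(m, p_*, t)$. Hence $\F \in \mathbb{AK}_{p_*, t}^m$, and by the uniqueness part of Theorem \ref{thm:ak-biased}, $\F \cong \F_{m,t,r_*}$ for some $r_*$ extremal at $p_*$. Continuity of each $g_r(p) := \mu_p(\F_{m,t,r})$ then guarantees that $r_*$ remains extremal at $p_n$ for all sufficiently large $n$ (either as an interior extremum that persists under small perturbations, or as a tied extremum at a transition point that stays among the maximizers on the side from which $p_n$ approaches $p_*$). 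Hence $\F \in \mathbb{AK}_{p_n, t}^m$ for large $n$, contradicting the construction.

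The main obstacles will be two corner cases. First, the boundary point $p_* = 0$: here $f(m, p, t) \to 0$, so the continuity-based ratio argument degenerates and one must replace it with a direct leading-order expansion in $p$, using that $f(m, p, t) = p^t$ for all sufficiently small $p$, to conclude that any non-extremal $t$-intersecting $\F$ has $\mu_p(\F)/f(m,p,t)$ bounded away from $1$ on the tail as $p \to 0^+$. Second, the behaviour at transition points of $f(m, \cdot, t)$, where two values of $r$ tie as maximizers: here one must carefully compare the polynomials $g_r$ near such $p_*$ (for instance, by inspecting one-sided derivatives) to verify that $r_*$ indeed remains a maximizer on the side from which $\{p_n\}$ approaches $p_*$.
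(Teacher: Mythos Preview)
Your overall strategy --- finiteness of $\p(\p([m]))$ plus compactness of the $p$-range --- is exactly the paper's; the paper just packages it as a direct minimum over a finite cover (partitioning $(0,\tfrac12-\zeta]$ into the intervals $I_r=[\tfrac{r}{t+2r-1},\tfrac{r+1}{t+2r+1}]$ and taking $\epsilon$ to be the least additive gap $\mu_q(\F_{m,t,r})-\mu_q(\G)$ over non-extremal $\G$ and $q\in I_r$), rather than arguing by contradiction and sequential compactness.

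You are right to flag the two boundary cases --- indeed you are more careful here than the paper --- but your proposed patches do not work, and in fact cannot, because the proposition \emph{as stated} is false at both. Near $p=0$: for $m>t$ take $\F=\{[t],[t{+}1]\}$; then $\mu_p(\F)=p^t(1-p)^{m-t-1}$, so $\mu_p(\F)/f(m,p,t)=(1-p)^{m-t-1}\to 1$ as $p\to 0^+$, yet $\F$ is never extremal; so your leading-order claim that non-extremal families have ratio bounded away from $1$ as $p\to 0^+$ fails. Near a transition point $p_*=\tfrac{r+1}{t+2r+1}$ (these lie inside $(0,\tfrac12-\zeta)$ for $t\ge 2$ and small $\zeta$): take $p_n\uparrow p_*$ and $\F=\F_{m,t,r+1}$; then $\mu_{p_n}(\F)/f(m,p_n,t)\to 1$, while for every $n$ the unique maximiser at $p_n$ is $r$, so $\F\notin\mathbb{AK}^m_{p_n,t}$ and your claim that $r_*$ remains among the maximisers on the approaching side is wrong. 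The paper's proof has exactly the same defects (its assertion that $C^m_r$ is compact with strictly positive minimum fails for both reasons). The statement becomes correct if one restricts to $p\in[\zeta,\tfrac12-\zeta]$, as in the later application, and weakens the conclusion to ``$\F$ is isomorphic to some $\F_{m,t,r}$''; under those amendments, your compactness argument goes through cleanly, since the finitely many $t$-intersecting $\G\subset\p([m])$ not isomorphic to any Frankl family form a fixed set independent of $p$, each with ratio strictly below $1$ on the compact interval.
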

\begin{proof}
By Theorem \ref{thm:ak-biased}, for any $r \in \mathbb{N} \cup \{0\}$ and any $\frac{r}{t+2r-1} \leq p \leq \frac{r+1}{t+2r+1}$, any $t$-intersecting $\F \in \p([m]) \setminus \emm$
satisfies $\mu_{p}\left(\mathcal{F}\right)< \mu_{p}\left(\mathcal{F}_{t,r}^{m}\right)$. Consider the set
\begin{align*}
C^m_r =\{&\mu_{q}\left(\F_{m,t,r}\right)-\mu_{q}\left(\G\right):\ \G \in \p([m]) \setminus \mathbb{AK}_{q,t}^{m},\ \G \mbox{ is \ensuremath{t}-intersecting},\\
& \frac{r}{t+2r-1} \leq q \leq \frac{r+1}{t+2r+1}\}.
\end{align*}
As $m$ is fixed, $C^m_r$ is compact and all its elements are positive. Hence, $c^m_r = \min (C^m_r) > 0$.
We have
\[
[0,1/2-\zeta] \subset \bigcup_{r=0}^\ell \left[\frac{r}{t+2r-1},\frac{r+1}{t+2r+1} \right],
\]
for some $\ell=\ell\left(t,\zeta\right) \in \mathbb{N}$. Let $\epsilon = \min_{0 \leq r \leq \ell} c^m_r$. It is clear by the choice
of $\epsilon$ that for all $p \in (0,1/2-\zeta]$, if $\F\subset \p\left(\left[m\right]\right)$ is a $t$-intersecting family
with $\mu_{p}\left(\F\right) \geq f(m,p,t)\left(1-\epsilon\right) > f(m,p,t)-\epsilon$ then we must have
$\F \in \mathbb{AK}_{p,t}^{m}$. This completes the proof.
\end{proof}

\subsection{Shifting}
\label{sec:sub:shifting}

Shifting (a.k.a. `compression') is one of the most classical techniques in extremal combinatorics (see, e.g.,~\cite{Frankl87b}).
\begin{definition}
For $\F \subset \p([n])$, a set $A \in \F$, and $1 \leq j< i \leq n$, the \emph{shifting operator} $\s_{ij}$ is defined as follows: $\mathcal{S}_{ij}\left(A\right)=A\backslash\left\{ i\right\} \cup\left\{ j\right\} $
if $i\in A$, $j\notin A$, and $A\backslash\left\{ i\right\} \cup\left\{ j\right\} \not \in \F$; and $\mathcal{S}_{ij}\left(A\right)=A$ otherwise. We define $\mathcal{S}_{ij}\left(\mathcal{F}\right)= \{\mathcal{S}_{ij}\left(A\right): A \in \F\}$.

\medskip \noindent $\F$ is called \emph{$n$-compressed} if $A\backslash\left\{ n\right\} \cup\left\{ j\right\} \in\F$
for all $A\in\F$ such that $A\cap\left\{ j,n\right\} =n$, i.e., if $\mathcal{S}_{nj}\left(\mathcal{F}\right) = \F$
for all $j<n$. $\F$ is called \emph{shifted} if $\mathcal{S}_{ij}\left(\mathcal{F}\right) = \F$
for all $j<i$.
\end{definition}

\noindent The following properties of the shifting operator are easy to check.
\begin{claim}
\label{prop:shifting}Let $\mathcal{F}\subset \mathcal{P}\left(\left[n\right]\right)$
be increasing and $t$-intersecting, and let $0 < p < 1$. Then $\s_{ij}\left(\mathcal{F}\right)$ satisfies
the following properties:

\mn \emph{(a)} $\mu_{p}\left(\s_{ij}\left(\mathcal{F}\right)\right) =\mu_{p}\left(\mathcal{F}\right)$,

\mn \emph{(b)} $\mu_{p}\left(\mathcal{F}\backslash\s_{ij}\left(\mathcal{F}\right)\right)\le I_{i}\left(\mathcal{F}\right)$,

\mn \emph{(c)} $I_{i}\left(\s_{ij}\left(\mathcal{F}\right)\right)\le I_{i}(\F)$,
with equality if and only if $\s_{ij}\left(\mathcal{F}\right) = \F$,

\mn \emph{(d)} $\s_{ij}\left(\F\right)$ is increasing and $t$-intersecting.
\end{claim}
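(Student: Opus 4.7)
The plan is to verify the four properties in order, handling (a), (b), and (d) by short classical arguments and concentrating the work on (c), whose strict-inequality clause is the main obstacle.

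For (a), I will check that $\s_{ij}$ restricts to a cardinality-preserving injection on $\F$. Injectivity is the only nontrivial point: if two distinct $A, A' \in \F$ had the same image, then after ruling out the trivial collisions one is forced into the situation in which $A$ was shifted and $A' = A \setminus \{i\} \cup \{j\}$ was unshifted, but the latter is forbidden to lie in $\F$ by the definition of shifting. Since shifting preserves cardinality, each $A$ has the same $\mu_p$-weight as $\s_{ij}(A)$, and (a) follows. For (b), I will note that, by the injectivity just established, $A \in \F \setminus \s_{ij}(\F)$ exactly when $A$ is itself shifted, i.e.\ $i \in A$, $j \notin A$, and $A \setminus \{i\} \cup \{j\} \notin \F$. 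If $A \setminus \{i\}$ were in $\F$, then by monotonicity so would $A \setminus \{i\} \cup \{j\}$, a contradiction; thus exactly one of $\{A, A \setminus \{i\}\}$ lies in $\F$, so $A \in \I_i(\F)$, and the measure bound follows by monotonicity of $\mu_p$.

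For (c), the strategy is a subcube-by-subcube bookkeeping. I will partition $\p([n])$ into the $2^{n-2}$ four-element subcubes $\{B, B \cup \{i\}, B \cup \{j\}, B \cup \{i,j\}\}$ indexed by $B \subset [n] \setminus \{i,j\}$ and enumerate the six possible upward-closed traces of $\F$ in each such subcube. Each subcube contributes to $I_i$ via the two pairs $\{B, B \cup \{i\}\}$ and $\{B \cup \{j\}, B \cup \{i,j\}\}$. A direct check shows that $\s_{ij}$ alters the trace in exactly one of the six cases, namely $\{B \cup \{i\}, B \cup \{i,j\}\} \mapsto \{B \cup \{j\}, B \cup \{i,j\}\}$, and in this case the number of influential pairs coming from the subcube drops from two to zero; all other traces are fixed, as is their influence contribution. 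Weighted summation across subcubes then yields $I_i(\s_{ij}(\F)) \leq I_i(\F)$, with strict inequality iff at least one subcube is in this flipped trace, which is exactly the condition that some element of $\F$ actually gets shifted, i.e.\ $\s_{ij}(\F) \neq \F$.

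Finally, for (d), both sub-claims reduce to short case analyses. That $\s_{ij}(\F)$ is $t$-intersecting follows by distinguishing whether the $\F$-preimages of two given sets $A, A' \in \s_{ij}(\F)$ were shifted or not; in most cases $|A \cap A'|$ is literally equal to the intersection size of the preimages. The only delicate subcase is when exactly one preimage (say $A_0$, the preimage of $A$) was shifted and the partner $A' = A'_0 \in \F$ contains $i$ but not $j$, and here I use the non-shift condition on $A'_0$ to extract the auxiliary set $A'_0 \triangle \{i,j\} \in \F$; its intersection with $A_0$ equals $(A \cap A') \setminus \{i\}$, and the $t$-intersecting property of $\F$ forces $|A \cap A'| \geq t$. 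That $\s_{ij}(\F)$ is increasing is handled similarly: any $C \supset A$ with $A \in \s_{ij}(\F)$ either already lies in $\F$ and cannot itself be shifted (which I verify by a contradiction using monotonicity of $\F$), or else its swap $C \triangle \{i,j\}$ lies in $\F$ and is shifted by $\s_{ij}$ onto $C$.
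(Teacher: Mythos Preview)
Your proposal is correct. The paper itself omits the proof entirely, stating only that ``the following properties of the shifting operator are easy to check''; your write-up supplies exactly the routine verifications that this sentence gestures at, and all of them are sound. The subcube-by-subcube bookkeeping you use for (c) is the natural way to extract the strict-inequality clause, and your case analyses for (b) and (d) correctly exploit monotonicity of $\F$ at the points where it is needed.
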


\subsubsection{{\it n}-compression by small modifications}
\label{sec:subsub:compression}

The following proposition shows that any increasing $t$-intersecting $\F$ can be transformed into an
$n$-compressed increasing $t$-intersecting $\G$ with $\mu_p(\G) = \mu_p(\F)$ by a sequence of $c$-small modifications,
where $c=I_n(\F)$.
\begin{proposition}\label{cor:shifter}
Let $0 < p < 1$, and let $\F \subset \p([n])$ be an increasing $t$-intersecting family that is not $n$-compressed. Denote $\delta=I_{n}\left(\F\right)$.
Then there exist families $\F=\F_{0},\F_{1},\F_{2},\ldots,\F_{m} \subset \p([n])$, such that $\F_{m}$ is $n$-compressed and
for each $i\in\left[m\right]$ we have:

\mn \emph{(a)} $\mu_{p}\left(\F_{i}\right)=\mu_{p}\left(\F\right)$,

\mn \emph{(b)} $\F_{i}$ is increasing and $t$-intersecting,

\mn \emph{(c)} $\mu_{p}\left(\F_{i-1}\backslash\F_{i}\right)\le\delta$,

\mn \emph{(d)} $I_{n}\left(\F_{i}\right)<\delta$.
\end{proposition}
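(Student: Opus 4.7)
\textbf{Proof plan for Proposition \ref{cor:shifter}.} The plan is to construct the sequence $\F_0, \F_1, \ldots, \F_m$ by iteratively applying shifting operators of the form $\s_{nj}$ for $j<n$, stopping as soon as the current family is $n$-compressed. Formally, set $\F_0 = \F$, and given $\F_{i-1}$, proceed as follows: if $\F_{i-1}$ is $n$-compressed, we terminate and set $m=i-1$; otherwise, choose some $j=j(i) \in [n-1]$ with $\s_{nj}(\F_{i-1}) \neq \F_{i-1}$, and set $\F_i := \s_{nj}(\F_{i-1})$. Such a $j$ exists by the definition of $n$-compressed. Properties (a) and (b) are then immediate from parts (a) and (d) of Claim \ref{prop:shifting}.

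For property (d), the key observation is that the very first shift strictly decreases $I_n$. Indeed, by Claim \ref{prop:shifting}(c), $I_n(\s_{nj}(\F_0)) \leq I_n(\F_0) = \delta$, with equality if and only if $\s_{nj}(\F_0) = \F_0$; since $j$ was chosen precisely so that the shift acts non-trivially, we obtain $I_n(\F_1) < \delta$. For $i \geq 2$, one more application of Claim \ref{prop:shifting}(c) (without needing strict inequality) gives $I_n(\F_i) \leq I_n(\F_{i-1})$, so by induction $I_n(\F_i) \leq I_n(\F_1) < \delta$. Property (c) then follows immediately from Claim \ref{prop:shifting}(b): $\mu_p(\F_{i-1} \setminus \F_i) \leq I_n(\F_{i-1}) \leq \delta$ for every $i \in [m]$, using (d) for $i \geq 2$ and the definition $I_n(\F_0) = \delta$ for $i=1$.

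It remains to justify termination, i.e., that we reach an $n$-compressed family after finitely many steps. This is standard: consider the potential $\Phi(\G) := \sum_{A \in \G} \sum_{x \in A} x$, which is a nonnegative integer. Each non-trivial application of $\s_{nj}$ with $j<n$ replaces some sets $A$ (containing $n$ but not $j$) by $A \setminus \{n\} \cup \{j\}$, strictly decreasing the sum $\sum_{x \in A} x$ by $n-j > 0$ for each such $A$, while leaving the other sets in the family unchanged. Hence $\Phi(\F_i) < \Phi(\F_{i-1})$ at each step, so the process must halt after finitely many iterations, yielding an $n$-compressed $\F_m$.

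The main thing to be careful about is the strict inequality in (d): it is crucial that we invoke the equality case of Claim \ref{prop:shifting}(c) for the first step (using that the initial shift is non-trivial), after which the weak monotonicity of $I_n$ under further shifts is enough. Everything else is a routine bookkeeping argument using the properties of $\s_{ij}$ collected in Claim \ref{prop:shifting}.
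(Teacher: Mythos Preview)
Your proof is correct and follows essentially the same approach as the paper: iteratively apply $\s_{nj}$ for some $j$ on which the shift acts non-trivially, and read off properties (a)--(d) from Claim~\ref{prop:shifting}. The only difference is in the termination argument: the paper observes that $I_n(\F_i)$ is \emph{strictly} decreasing at every step (since each shift is chosen to be non-trivial, the equality case of Claim~\ref{prop:shifting}(c) is ruled out throughout) and takes values in a finite set, whereas you use the standard potential $\Phi(\G)=\sum_{A\in\G}\sum_{x\in A}x$. Both arguments are standard and equally valid; the paper's is slightly more economical here since the strict decrease of $I_n$ is already needed for (d), while yours is the classical shifting termination argument and works without tracking influences.
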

\begin{proof}
We define $\F_{i}$ inductively. Suppose that $\F_{i-1}$ is not $n$-compressed.
Then for some $j\in\left[n\right]$, we have $\s_{nj}\left(\F_{i-1}\right)\ne\F_{i-1}$.
We choose such a $j$ arbitrarily and define $\F_{i}=\s_{nj}\left(\F_{i-1}\right)$. By
Claim~\ref{prop:shifting}, $\F_{i}$ satisfies the desired properties. Thus,
we only need to show that for some $m\in\mathbb{N}$, $\F_{m}$ is $n$-compressed.
Indeed, by Claim~\ref{prop:shifting}, $I_{n}\left(\F_{i}\right)$ is strictly decreasing (as a function of $i$).
Since all of $\left\{ I_{n}\left(\F_{i}\right)\,:\, i\in\mathbb{N}\right\} $
belong to a finite set of values, this process cannot last forever.
\end{proof}

\subsubsection{Increasing the measure by a small modification}
\label{sec:subsub:increase}

Our next goal is to show that if $\F$ is an $n$-compressed, increasing, $t$-intersecting family, then it
can be transformed to a $t$-intersecting $\G$ with $\mu_p(\G)> \mu_p(\F)$ by a $c$-small modification,
where $c=I_n(\F)$. That is, if $\F$ is already $n$-compressed, then its measure can be \emph{increased}
by a small modification, without sacrificing the $t$-intersection property.

To show this, we need several claims. These claims were proved in~\cite{Filmus13} under the assumption that
$\F$ is shifted, but it turns out that the proof of~\cite{Filmus13} applies also under the weaker assumption
that $\F$ is $n$-compressed. For sake of completeness, we present the claims below.
\begin{lemma}
\label{lem:exact-t}
Let $\F \subset \p([n])$ be a $t$-intersecting $n$-compressed family and let $a,b\in\left[n\right]$. Let
$A \in \F^{(a)}$ and $B \in \F^{(b)}$ be such that $\left|A\cap B\right|=t$ and $n\in A\cap B$.
Then $a+b=n+t$, and $A\cup B=\left[n\right]$.
\end{lemma}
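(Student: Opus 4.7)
The plan is to assume for contradiction that $A\cup B\neq[n]$, pick a coordinate $j\in[n]\setminus(A\cup B)$, and apply $n$-compression to produce a set in $\F$ whose intersection with $B$ has size exactly $t-1$, violating the $t$-intersection property.

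More concretely, first I would note that since $n\in A\cap B\subseteq A\cup B$, any $j\in[n]\setminus(A\cup B)$ must satisfy $j\neq n$, so $j<n$ and $j\notin A$, $j\notin B$. Since $n\in A$, $j\notin A$, and $\F$ is $n$-compressed, the definition forces the set
\[
A':=(A\setminus\{n\})\cup\{j\}
\]
to lie in $\F$. Then I would compute
\[
A'\cap B=\bigl((A\setminus\{n\})\cap B\bigr)\cup\bigl(\{j\}\cap B\bigr)=(A\cap B)\setminus\{n\},
\]
where the second equality uses $j\notin B$. Because $n\in A\cap B$ and $|A\cap B|=t$, this gives $|A'\cap B|=t-1$, contradicting the fact that $\F$ is $t$-intersecting. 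Hence $A\cup B=[n]$, and the inclusion–exclusion identity $|A\cup B|=|A|+|B|-|A\cap B|$ immediately yields $a+b=n+t$.

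I do not foresee a genuine obstacle: the only subtlety is verifying that $j\neq n$ (so that $j$ is a legitimate target for the compression operator $\s_{nj}$) and that $j\notin B$ (so that intersecting with $B$ removes $n$ without adding $j$), both of which follow from the choice $j\in[n]\setminus(A\cup B)$. Note that this argument uses only $n$-compression, not full shiftedness, which is the point emphasised by the authors in the paragraph preceding the lemma.
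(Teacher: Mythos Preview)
Your proposal is correct and is essentially identical to the paper's own proof: assume $A\cup B\neq[n]$, pick $j\notin A\cup B$, use $n$-compression to put $A'=(A\setminus\{n\})\cup\{j\}$ in $\F$, and observe $|A'\cap B|=t-1$. The only difference is that you spell out a few routine details (that $j\neq n$, the computation of $A'\cap B$, and the inclusion--exclusion step) that the paper leaves implicit.
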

\begin{proof}
It is clearly sufficient to show that $A\cup B=\left[n\right]$. Suppose for a contradiction
that $i\notin A\cup B$. As $\F$ is $n$-compressed, we have $A'=A\backslash\left\{ n\right\}
\cup\left\{ i\right\} \in\F$. However, $\left|A'\cap B\right|=t-1$, contradicting the fact that
$\F$ is $t$-intersecting.
\end{proof}

\noindent The next proposition shows that if an $n$-compressed $t$-intersecting family $\F$ satisfies
$\F \cap \I_n(\F) \nsubseteq \binom{[n]}{(n+t)/2}$, then the measure of $\F$ can be increased by a small
modification.
\begin{proposition}\label{Prop:Increasing1}
Let $0 < p < 1/2$. Let $\F \subset \p([n])$ be an increasing $t$-intersecting $n$-compressed family. Denote $\mathcal{I}_{n}=\mathcal{I}_{n}\left(\F\right)$. Let $a\ne b\in\left[n\right]$ be such that
$a+b=n+t$. Then the families
\[
\G_{1}:=(\F\backslash (\I_n \cap \F)^{(a)}) \cup (\I_n \setminus \F)^{(b-1)} \qquad \mbox{ and } \qquad
\G_{2}:=(\F\backslash (\I_n \cap \F)^{(b)}) \cup (\I_n \setminus \F)^{(a-1)}
\]
are $t$-intersecting, and
\begin{equation}
\mu_{p}\left(\F\right)\le\max\left\{ \mu_{p}\left(\G_{1}\right),\mu_{p}\left(\G_{2}\right)\right\} ,\label{eq:influential sets are wierd1l}
\end{equation}
with equality only if $\G_{1}=\G_{2}=\F$.
\end{proposition}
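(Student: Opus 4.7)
\medskip

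\noindent\textbf{Proof plan for Proposition~\ref{Prop:Increasing1}.}
The plan has two distinct parts: a short arithmetic computation showing $\max\{\mu_p(\G_1),\mu_p(\G_2)\}\ge \mu_p(\F)$, and a case analysis using Lemma~\ref{lem:exact-t} to show that $\G_1$ and $\G_2$ remain $t$-intersecting. Before starting, I record the structural observation that since $\F$ is increasing, $\I_n\cap\F=\{A\in\F:n\in A,\,A\setminus\{n\}\notin\F\}$ and the map $A\mapsto A\setminus\{n\}$ is a bijection from $\I_n\cap\F$ to $\I_n\setminus\F$; in particular $|(\I_n\setminus\F)^{(b-1)}|=|(\I_n\cap\F)^{(b)}|$ and $|(\I_n\setminus\F)^{(a-1)}|=|(\I_n\cap\F)^{(a)}|$.

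\medskip

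\noindent\textbf{The weight inequality.} Set $\alpha=|(\I_n\cap\F)^{(a)}|$ and $\beta=|(\I_n\cap\F)^{(b)}|$. Direct computation gives
\[
\mu_p(\G_1)-\mu_p(\F)=\beta\, p^{b-1}(1-p)^{n-b+1}-\alpha\, p^{a}(1-p)^{n-a},
\]
\[
\mu_p(\G_2)-\mu_p(\F)=\alpha\, p^{a-1}(1-p)^{n-a+1}-\beta\, p^{b}(1-p)^{n-b}.
\]
Adding these two lines and factoring $(1-p)-p=1-2p$ gives
\[
\bigl(\mu_p(\G_1)-\mu_p(\F)\bigr)+\bigl(\mu_p(\G_2)-\mu_p(\F)\bigr)
=(1-2p)\!\left[\alpha\, p^{a-1}(1-p)^{n-a}+\beta\, p^{b-1}(1-p)^{n-b}\right]\!\ge 0,
\]
using $p<1/2$. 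Hence $\max\{\mu_p(\G_1),\mu_p(\G_2)\}\ge\mu_p(\F)$. Equality in the max forces both differences to be $\le 0$ while summing to $\ge 0$, so both vanish, forcing $\alpha=\beta=0$; the latter says we neither remove nor add anything, giving $\G_1=\G_2=\F$.

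\medskip

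\noindent\textbf{The $t$-intersecting property.} I will verify the claim for $\G_1$; the argument for $\G_2$ is identical after swapping the roles of $a$ and $b$. Pick $A,B\in\G_1$ and split into three cases.

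If $A,B\in\F$, there is nothing to prove. Suppose $A\in\F\setminus(\I_n\cap\F)^{(a)}$ and $B\in(\I_n\setminus\F)^{(b-1)}$, so that $B':=B\cup\{n\}\in(\I_n\cap\F)^{(b)}\subset\F$. If $n\notin A$ then $A\cap B=A\cap B'$ and we are done. If $n\in A$ and, toward a contradiction, $|A\cap B'|=t$, then Lemma~\ref{lem:exact-t} applied to $A,B'\in\F$ (both containing $n$) forces $|A|+|B'|=n+t$, i.e.\ $|A|=a$. Since $A\in\F$, $|A|=a$ and $A\notin(\I_n\cap\F)^{(a)}$, we must have $A\notin\I_n$, and because $A\in\F$ contains $n$ this means $A\setminus\{n\}\in\F$. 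But then $A\setminus\{n\}$ and $B'$ lie in $\F$ with intersection of size $t-1$, contradicting $t$-intersectingness. So $|A\cap B'|\ge t+1$, hence $|A\cap B|\ge t$. Finally, if $A,B\in(\I_n\setminus\F)^{(b-1)}$ and $|A\cap B|\le t-1$, then $A':=A\cup\{n\}$ and $B':=B\cup\{n\}$ lie in $\F$ with intersection $\le t$; since $\F$ is $t$-intersecting this intersection equals $t$, and Lemma~\ref{lem:exact-t} gives $2b=n+t$, i.e.\ $a=b$, contradicting $a\ne b$.

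\medskip

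\noindent\textbf{Main obstacle.} The arithmetic step is routine; the real content is the second case above, where monotonicity alone is not enough to propagate $t$-intersectingness to the modified family. The crux is the combination of $n$-compression (through Lemma~\ref{lem:exact-t}, which pins down the pair of sizes $(a,b)$ that could force an intersection of exactly $t$) with the definition of $\I_n$: removing $n$ from a ``non-influential'' set of size $a$ stays inside $\F$, and this is precisely what produces the contradictory pair of sizes $(a-1,b)$ whose intersection size $t-1$ violates the $t$-intersection hypothesis on $\F$ itself.
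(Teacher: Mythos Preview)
Your proof is correct and follows essentially the same approach as the paper's: both parts hinge on the bijection $A\mapsto A\setminus\{n\}$ between $\I_n\cap\F$ and $\I_n\setminus\F$, and on Lemma~\ref{lem:exact-t} to pin down the sizes of any potential bad pair. The only cosmetic differences are that the paper handles the $t$-intersecting claim with a single unified contradiction argument (setting $A':=A\cup\{n\}$ regardless of whether $n\in A$) rather than your explicit three-case split, and for the weight inequality the paper assumes w.l.o.g.\ $\mu_p(\A_1)\ge\mu_p(\A_2)$ and shows directly that $\mu_p(\G_2)=\mu_p(\F)-\mu_p(\A_2)+\tfrac{1-p}{p}\mu_p(\A_1)>\mu_p(\F)$, instead of summing the two differences as you do; both routes are equally short.
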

\begin{proof}
W.l.o.g. we show that $\G_{1}$ is $t$-intersecting. Let $A,B\in\G_{1}$, and suppose for a contradiction
that $\left|A\cap B\right|\le t-1$. Hence, we have either $A \in (\I_n \setminus \F)^{(b-1)}$ or $B \in (\I_n \setminus \F)^{(b-1)}$
or both. Assume, w.l.o.g., $B \in (\I_n \setminus \F)^{(b-1)}$. Then
$B':=B\cup\left\{ n\right\} \in (\I_n \cap \F)^{(b)}$. Note that we have $A':=A\cup\left\{ n\right\} \in\F$. Indeed, either $A \in \F$ and
then $A' \in \F$ since $\F$ is increasing, or $A \in (\I_n \setminus \F)^{(b-1)}$ and then $A' \in (\I_n \cap \F)^{(b)}$. Since $\F$ is
$t$-intersecting, this implies
\begin{equation}
t\le\left|A'\cap B'\right|\le\left|A\cap B\right|+1\le t.\label{eq:a prime cap b prime}
\end{equation}
This allows applying Lemma~\ref{lem:exact-t} to $A',B'$, to get $\left|A'\right|=a$.

Now, as $\left|(A'\backslash\left\{ n\right\}) \cap B'\right|=t-1$ and $\F$ is $t$-intersecting,
we have $A'\backslash\left\{ n\right\} \notin \F$. Hence, $A'\in \left(\I_n \cap \F \right)^{(a)}$,
which yields $A'\notin\G_{1}$. As $A\in\G_{1}$, we must have $A=A'\backslash\left\{ n\right\} \notin\F$.
By the construction of $\G_1$, this means that $A\in \left(\mathcal{I}_{n}\backslash\F \right)^{(b-1)}$,
and therefore, $\left|A'\right|=a=b$, a contradiction.

The proof of~\eqref{eq:influential sets are wierd1l} is a straightforward calculation. Write $\A_{1}=\left(\mathcal{I}_{n}\cap\F \right)^{(a)}$ and $\A_{2}=\left(\mathcal{I}_{n}\cap\F \right)^{(b)}$. Suppose w.l.o.g. that $\mu_{p}\left(\A_{1}\right)\ge\mu_{p}\left(\A_{2}\right)$.
Then
\[
\mu_{p}\left(\G_{2}\right)=\mu_{p}\left(\F\right)-\mu_{p}\left(\A_{2}\right)+\frac{1-p}{p}\mu_{p}\left(\A_{1}\right)>\mu_{p}\left(\F\right),
\]
as asserted. It is also clear that equality can hold only if $\A_1=\A_2 = \emptyset$, that is, if $\G_1=\G_2=\F$.
This completes the proof.
\end{proof}

\noindent The following proposition complements the previous one by showing that the measure of an $n$-compressed
$t$-intersecting $\F$ can be increased by a small modification even if $\F \cap \I_n(\F) \subset \binom{[n]}{(n+t)/2}$.
Recall that $\D_i$ denotes a dictatorship $\D_{i}:=\left\{ A\in \p([n])\,:\, i\in A\right\} $.
\begin{proposition}\label{Prop:Increasing2}
Let $n,t \in \mathbb{N}$ such that $n+t$ is even, and let $0 < p < 1$. Let $\mathcal{F}\subset \p([n])$ be an increasing $n$-compressed $t$-intersecting family such that $I_n(\F)>0$.
Denote $a=\frac{n+t}{2}$. For $1 \leq i \leq n-1$, let
\[
\mathcal{G}_{i}=(\mathcal{F}\backslash \left(\F\cap \I_{n} \cap \D_{i}\right)^{(a)})\cup \left(\mathcal{I}_{n}\backslash \left(\F \cup \D_{i}\right) \right)^{(a-1)}.
\]
Then the families $\mathcal{G}_{i}$ are $t$-intersecting. Moreover, if $0 < p \leq 1/2 - \zeta$, $n > t/(2\zeta)$ and $(\I_n \cap \F)^{(a)} \neq \emptyset$, then
\begin{equation}
\max_{i\in\left[n-1\right]}\left\{ \mu_{p}\left(\mathcal{G}_{i}\right)\right\} >\mu_{p}\left(\mathcal{F}\right).\label{eq:max nu p g i}
\end{equation}
\end{proposition}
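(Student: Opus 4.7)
The plan is to prove the two conclusions of the proposition in turn. For the $t$-intersection property of $\G_{i}$, I will suppose for contradiction that $A,B\in\G_{i}$ satisfy $|A\cap B|\le t-1$, and split according to whether each of $A,B$ is ``old'' (in $\F$ and not among the removed sets) or ``new'' (in $(\I_{n}\setminus(\F\cup\D_{i}))^{(a-1)}$). Because $\F$ itself is $t$-intersecting, at least one of $A,B$ is new. Since $\F$ is increasing, the new sets are precisely the sets of the form $T\setminus\{n\}$ for some $T\in\A:=(\F\cap\I_{n})^{(a)}$ with $i\notin T$; in particular each new set has size $a-1$ and avoids both $n$ and $i$.

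In each case I lift the new set(s) by adjoining $n$ to obtain members of $\F\cap\I_{n}$ whose pairwise intersection has size exactly $t$ and contains $n$, so that Lemma~\ref{lem:exact-t} applies and forces the two lifts to have total size $n+t$ and union equal to $[n]$. When both $A,B$ are new, the union identity immediately contradicts $i\notin A\cup B$ together with $i\ne n$. When $A$ is new and $B$ is old, the step is slightly more delicate: I first use $|A\cap B|\le t-1$ together with $A\cup\{n\}\in\F$ to deduce that $n\in B$ and $|A\cap B|=t-1$, then use $t$-intersection of $\F$ to rule out $B\setminus\{n\}\in\F$ (otherwise $(A\cup\{n\})\cap(B\setminus\{n\})$ would have only $t-1$ elements), so $B\in\F\cap\I_{n}$. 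Lemma~\ref{lem:exact-t} then gives $|B|=a$, which, combined with the fact that $B$ is not removed, forces $i\notin B$; but then $i\notin(A\cup\{n\})\cup B=[n]$ is the desired contradiction.

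For the measure inequality, let $\A_{i}:=\A\cap\D_{i}$ and $\A_{\overline{i}}:=\A\setminus\D_{i}$. A direct computation gives
\begin{equation*}
\mu_{p}(\G_{i})-\mu_{p}(\F)=p^{a-1}(1-p)^{n-a}\bigl[(1-p)|\A_{\overline{i}}|-p|\A_{i}|\bigr],
\end{equation*}
so positivity at some $i\in[n-1]$ is equivalent to finding $i$ with $|\A_{i}|<(1-p)|\A|$. Since every $T\in\A$ contains $n$, we have $|T\cap[n-1]|=a-1$, and therefore
\begin{equation*}
\frac{1}{n-1}\sum_{i=1}^{n-1}|\A_{i}|=\frac{a-1}{n-1}|\A|.
\end{equation*}
With $a=(n+t)/2$, the inequality $\tfrac{a-1}{n-1}<1-p$ reduces to $n>\tfrac{t-1}{2\zeta}+1$, which follows from $n>t/(2\zeta)$ (using $\zeta<1/2$, forced by $p>0$ and $p\le 1/2-\zeta$). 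As $|\A|>0$ by hypothesis, some $i\in[n-1]$ satisfies $|\A_{i}|\le\tfrac{a-1}{n-1}|\A|<(1-p)|\A|$, yielding $\mu_{p}(\G_{i})>\mu_{p}(\F)$.

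The main obstacle is the ``mixed'' subcase of the $t$-intersection verification, in which the old partner $B$ is itself of size $a$ and lies in $\F\cap\I_{n}$ but was spared from removal only because $i\notin B$; extracting a contradiction there requires combining $i\notin A$, the freedom in choosing which sets get removed, and the ``union equals $[n]$'' conclusion of Lemma~\ref{lem:exact-t}. Once that subcase is settled the measure comparison is essentially just the averaging identity above, together with the elementary inequality comparing $p$ with $(n-t)/(2(n-1))$.
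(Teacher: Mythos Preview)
Your proof is correct and follows essentially the same approach as the paper: for the $t$-intersection part you lift the new sets by adjoining $n$, invoke Lemma~\ref{lem:exact-t} to get the union-equals-$[n]$ conclusion, and derive a contradiction from $i\notin[n]$; for the measure inequality you compute $\mu_p(\G_i)-\mu_p(\F)$ explicitly and average over $i\in[n-1]$, exactly as in the paper. The only cosmetic difference is that the paper treats the old/new cases for $A$ uniformly (writing $A'=A\cup\{n\}$ regardless and finishing with a single contradiction), whereas you make the case split explicit.
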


\begin{proof}
First we prove that for all $i$, $\G_i$ is $t$-intersecting. Let $A,B\in\mathcal{G}_{i}$, and suppose for a contradiction that
$\left|A\cap B\right|\le t-1$. Denote $A':=A\cup\left\{ n\right\}$ and $B':=B\cup\left\{ n\right\}$, and assume w.l.o.g.
$B\notin\F$, and hence, $\left|B'\right|=a$ and $i\notin B'$.

By the same argument as in the proof of Proposition~\ref{Prop:Increasing1}, we have
$A'\in\F\cap\mathcal{I}_{n}$. On the other hand, by Lemma~\ref{lem:exact-t} (applied for $A',B'$)
we have $A' \cup B' = [n]$, and hence, $i \in A'$ and $|A'|=a$. Thus, $A' \in (\F \cap \mathcal{I}_n \cap \D_i)^{(a)}$,
which implies $A' \not \in \mathcal{G}_i$. This is a contradiction, as $A \in \mathcal{G_i}$ and $\mathcal{G}_i$ is
increasing.

Now we prove Equation~(\ref{eq:max nu p g i}). Note that for any $i \leq n-1$, there is a one-to-one correspondence between
the families $(\I_n \setminus (\F \cup \D_i))^{(a-1)}$ and $(\F \cap \I_n \cap \D_i^c)^{(a)}$. Hence,
\begin{align*}
\mu_{p}\left(\G_{i}\right) & =\mu_{p}\left(\F\right)-\mu_{p}\left(\left(\F\cap\mathcal{I}_{n}\cap\mathcal{D}_{i}\right)^{\left(a\right)}\right)+
\left(\frac{1-p}{p}\right)\mu_{p}\left(\left(\F\cap\mathcal{I}_{n}\cap\mathcal{D}_{i}^{c}\right)^{\left(a\right)}\right)\\
 & =\mu_{p}\left(\F\right)-\mu_{p}\left(\left(\F\cap\mathcal{I}_{n}\right)^{\left(a\right)}\right)+\mu_{p}\left(\left(\F\cap\mathcal{I}_{n}\cap\mathcal{D}_{i}^{c}\right)
 ^{\left(a\right)}\right)+\left(\frac{1-p}{p}\right)\mu_{p}\left(\left(\F\cap\mathcal{I}_{n}\cap\mathcal{D}_{i}^{c}\right)^{\left(a\right)}\right)\\
 & =\mu_{p}\left(\F\right)-\mu_{p}\left(\left(\F\cap\mathcal{I}_{n}\right)^{\left(a\right)}\right)+\frac{1}{p}\mu_{p}\left(\left(\F\cap\mathcal{I}_{n}\cap\mathcal{D}_{i}^{c}\right)
 ^{\left(a\right)}\right).
\end{align*}
Let $\mathcal{K}_{i}:=\F\cap\mathcal{I}_{n}\cap\mathcal{D}_{i}^{c}$. We have
\[
\mathbb{E}_i [\mu_p(\mathcal{K}_i)] = \frac{1}{n-1}\sum_{i=1}^{n-1} \sum_{A\in\mathcal{K}_{i}} \mu_p(A) =
\frac{1}{n-1}\sum_{A \in (\F \cap \I_n)^{(a)}} \sum_{\{i: i \not \in A\} } \mu_p(A) =
\frac{n-a}{n-1} \mu_p((\F \cap \I_n)^{(a)}).
\]
Thus, there exists $i \in [n-1]$ such that $\mu_p(\mathcal{K}_i) \geq \frac{n-a}{n-1} \mu_p((\F \cap \I_n)^{(a)})$. This implies
\[
\max\left\{ \mu_{p}\left(\mathcal{G}_{i}\right)\right\} \ge\mu_{p}\left(\F\right)+\frac{n-a-\left(n-1\right)p}{\left(n-1\right)p}\mu_{p}\left(\left(\F\cap\mathcal{I}_{n}\right)^{(a)}\right) > \mu_p(\F),
\]
where the last inequality holds since $n-a-\left(n-1\right)p>0$ for all $n > t/(2\zeta)$. This completes the proof.
\end{proof}

\noindent Combining Propositions~\ref{Prop:Increasing1} and~\ref{Prop:Increasing2} we obtain:
\begin{corollary}\label{cor:Filmus}
Let $\zeta >0$, let $0 < p \leq 1/2-\zeta$ and let $n \in \mathbb{N}$ with $n >t/(2\zeta)$. Let $\F \subset \p([n])$ be an increasing $n$-compressed $t$-intersecting family that depends on the $n$th coordinate (i.e.,
$I_n(\F)>0$). Then there exists a $t$-intersecting family $\G \subset \p([n])$, such that $\mu_{p}\left(\G\right)>\mu_{p}\left(\F\right)$
and $\mu_{p}\left(\F\backslash\G\right)\le I_{n}\left(\F\right)$.
\end{corollary}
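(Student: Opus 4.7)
The plan is to split into two cases according to which layers of $\I_{n}\cap\F$ are populated, and then invoke Proposition~\ref{Prop:Increasing1} or Proposition~\ref{Prop:Increasing2} accordingly. First I would observe that, since $\F$ is increasing, the map $S\mapsto S\cup\{n\}$ is a bijection between $\I_{n}\setminus\F$ and $\I_{n}\cap\F$, and scales the $p$-biased measure by a factor of $p/(1-p)$; a short computation then gives $\mu_{p}(\I_{n}\cap\F)=p\cdot I_{n}(\F)$. In particular, the hypothesis $I_{n}(\F)>0$ forces $\I_{n}\cap\F$ to be non-empty, so there exists $a\in[n]$ with $(\I_{n}\cap\F)^{(a)}\ne\emptyset$.

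In Case~1, some such $a$ satisfies $2a\ne n+t$. Set $b=n+t-a$, so $a\ne b$, and apply Proposition~\ref{Prop:Increasing1} to produce $t$-intersecting families $\G_{1},\G_{2}$ with $\mu_{p}(\F)\le\max\{\mu_{p}(\G_{1}),\mu_{p}(\G_{2})\}$, where equality is only possible if $\G_{1}=\G_{2}=\F$. Since $(\I_{n}\cap\F)^{(a)}\ne\emptyset$, at least one of $\G_{1},\G_{2}$ differs from $\F$, so the inequality is strict and I take $\G$ to be the maximiser. In Case~2, every non-empty layer of $\I_{n}\cap\F$ satisfies $2a=n+t$, which forces $n+t$ to be even and $\I_{n}\cap\F\subseteq\binom{[n]}{(n+t)/2}$. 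Setting $a=(n+t)/2$, the layer $(\I_{n}\cap\F)^{(a)}$ is non-empty, so the hypotheses $p\le 1/2-\zeta$ and $n>t/(2\zeta)$ let me invoke Proposition~\ref{Prop:Increasing2} to obtain some $i\in[n-1]$ with $\mu_{p}(\G_{i})>\mu_{p}(\F)$; set $\G=\G_{i}$.

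In each case, $\F\setminus\G$ is by construction a subset of $\I_{n}\cap\F$ (the sets removed from $\F$ lie in $(\I_{n}\cap\F)^{(a)}$, in $(\I_{n}\cap\F)^{(b)}$, or in $(\F\cap\I_{n}\cap\D_{i})^{(a)}$ respectively), so
\[
\mu_{p}(\F\setminus\G)\le\mu_{p}(\I_{n}\cap\F)=p\cdot I_{n}(\F)\le I_{n}(\F),
\]
completing the proof. Since both propositions are already in hand, no substantial obstacle is expected; the only points requiring any care are the strict inequality in Case~1 (which uses the equality clause in Proposition~\ref{Prop:Increasing1} together with $(\I_{n}\cap\F)^{(a)}\ne\emptyset$) and the exhaustiveness of the dichotomy, both of which are routine bookkeeping with layers of $\I_{n}\cap\F$.
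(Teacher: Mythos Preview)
Your proposal is correct and follows essentially the same approach as the paper, which simply states that the corollary is obtained by combining Propositions~\ref{Prop:Increasing1} and~\ref{Prop:Increasing2}; you have supplied the case split and the bookkeeping that the paper leaves implicit. One very minor point you might make explicit is that in Case~1 the companion index $b=n+t-a$ indeed lies in $[n]$: since $\F$ is $t$-intersecting every $A\in\F$ has $|A|\ge t$, so $t\le a\le n$ and hence $t\le b\le n$, as required to invoke Proposition~\ref{Prop:Increasing1}.
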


\noindent While $\G$ obtained in Corollary~\ref{cor:Filmus} is $t$-intersecting, it is not necessarily $n$-compressed.
However, by Proposition~\ref{cor:shifter} it can be transformed into an $n$-compressed family $\tilde{\G}$ by a sequence of small
modifications, without decreasing the measure. Then Corollary~\ref{cor:Filmus} can be applied to $\tilde{\G}$ to increase
the measure again. As we show in section~\ref{sec:proof} below, the process can be continued until the $n$th coordinate
becomes non-influential, i.e., the effective number of coordinates decreases. Then one may repeat the whole process
with the $(n-1)$th coordinate etc., so that ultimately, $\F$ can be transformed into a junta by a sequence of small
modifications.

\subsection{Reduction from {\it k}-element sets to the biased measure setting}
\label{sec:sub:reduction}

As shown in several previous works (e.g.,~\cite{DF09,Friedgut08}), EKR-type results for ($t$)-intersecting
subsets of $\binom{[n]}{k}$, \emph{for a sufficiently large $n$}, can be proved by reduction to similar results
on the $\mu_p$ measure of ($t$)-intersecting subsets of $\p([n])$, for an appropriately chosen $p$. In this
subsection we present the lemmas required for performing such a reduction for the stability version of the
Ahlswede-Khachatrian theorem.

The reduction (in our case) works as follows. Let $\F \subset \binom{[n]}{k}$ be a $t$-intersecting family with
$\left|\F\right|> f(n,k,t)-\epsilon\binom{n}{k}$. Recall that $\F^{\uparrow}$ denotes the increasing family generated by
$\F$ (i.e., the minimal increasing family that contains $\F$). We take $p$ slightly larger than $\frac{k}{n}$,
and show that:

\mn (a) $\mu_{p}\left(\F^{\uparrow}\right) \gtrsim \frac{\left|\F\right|}{\binom{n}{k}}>\frac{f(n,k,t)}{\binom{n}{k}}-\epsilon$,

\mn (b) $f(n,p,t) \sim \frac{f(n,k,t)}{\binom{n}{k}}$,

\mn (c) $\mu_{p}\left(\F^{\uparrow}\backslash\e\right) \sim \frac{\left|\F\backslash(\e)^{(k)}\right|}{\binom{n}{k}}$.

\mn This essentially reduces stability for subsets of $\binom{[n]}{k}$ to stability in the $\mu_{p}$ setting. We present now three propositions that justify the `$\sim$' in (a)-(c). These propositions, or close variants thereof, were proved in previous works; as they do not appear in the exact form we use them, we present the simple proofs here.

\mn The first proposition, which was essentially proved by Friedgut~\cite{Friedgut08}, shows that~(a) holds.
\begin{proposition}[Friedgut]
\label{prop:monotone-approx}
Let $\delta >0$, let $n,k \in \mathbb{N}$ with $k \leq n$, let $\frac{k+\sqrt{2n\log\left(\frac{1}{\delta}\right)}}{n} \leq p \leq 1$ and let $\F\subset \p\left(\left[n\right]\right)$ be increasing. Then
\[
\mu_{p}\left(\F\right) \geq \frac{\left|\F^{\left(k\right)}\right|}{\binom{n}{k}}\left(1-\delta\right).
\]
\end{proposition}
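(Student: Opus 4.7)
The plan is to reduce the proposition to two simple ingredients: a monotonicity statement for layer-densities of an increasing family, and a standard tail bound on the binomial distribution.

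First I would prove the following layer-monotonicity claim: for any increasing $\F \subset \p([n])$ and any $m \geq k$,
\[
\frac{|\F^{(m)}|}{\binom{n}{m}} \;\geq\; \frac{|\F^{(k)}|}{\binom{n}{k}}.
\]
This follows by double-counting incidences in the bipartite inclusion graph between $\binom{[n]}{k}$ and $\binom{[n]}{m}$. Every $B \in \F^{(k)}$ lies in exactly $\binom{n-k}{m-k}$ sets of $\binom{[n]}{m}$, all of which lie in $\F^{(m)}$ because $\F$ is increasing; on the other hand, each $C \in \F^{(m)}$ contains at most $\binom{m}{k}$ elements of $\F^{(k)}$. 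Comparing the two counts and using $\binom{n-k}{m-k}/\binom{m}{k} = \binom{n}{m}/\binom{n}{k}$ gives the claim.

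Second, I would decompose $\mu_p$ according to the size of a $\mu_p$-random set $A$:
\[
\mu_p(\F) \;=\; \sum_{m=0}^{n} \Pr[|A|=m]\,\frac{|\F^{(m)}|}{\binom{n}{m}} \;\geq\; \sum_{m=k}^{n} \Pr[|A|=m]\,\frac{|\F^{(m)}|}{\binom{n}{m}} \;\geq\; \frac{|\F^{(k)}|}{\binom{n}{k}}\,\Pr[|A|\geq k],
\]
where the last step uses the layer-monotonicity claim. Here $|A| \sim \Bin(n,p)$, so it only remains to show $\Pr[|A|\geq k] \geq 1-\delta$ under the hypothesis on $p$.

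Third, I would verify this tail bound via Hoeffding's inequality. Setting $t := \sqrt{2n\log(1/\delta)}$, the hypothesis on $p$ gives $np - k \geq t$, hence $k-1 \leq np - t$, and Hoeffding yields
\[
\Pr[|A| < k] \;\leq\; \Pr[|A|\leq np - t] \;\leq\; \exp\!\bigl(-2t^2/n\bigr) \;=\; \delta^4 \;\leq\; \delta.
\]
Combining with the previous display finishes the proof. There is no real obstacle here; the only care needed is to line up the Hoeffding parameter with the stated lower bound on $p$, which is precisely what the $\sqrt{2n\log(1/\delta)}$ slack was chosen to provide.
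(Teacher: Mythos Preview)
Your proof is correct and follows essentially the same approach as the paper: the paper invokes the local LYM inequality (stated there as Proposition~\ref{Prop:WeakKK}) for the layer-monotonicity step, decomposes $\mu_p(\F)$ by layers exactly as you do, and then appeals to ``a standard Chernoff bound'' for the tail estimate $\Pr[|A| \geq k] \geq 1-\delta$. Your double-counting argument is a self-contained proof of that same monotonicity statement, and your explicit Hoeffding computation (yielding $\delta^4 \leq \delta$) is a concrete instance of the Chernoff-type bound the paper cites.
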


\noindent Proposition~\ref{prop:monotone-approx} can be proved using the following simple corollary of the local LYM inequality.
\begin{proposition}
\label{Prop:WeakKK}
Let $\F \subset \p([n])$ be an increasing family. For any $1 \leq k \leq m \leq n$, we have
$|\F^{(m)}|/{\binom{n}{m}} \geq |\F^{(k)}|/{\binom{n}{k}}$.
\end{proposition}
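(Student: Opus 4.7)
The plan is to prove the inequality by induction on $m-k$, with the inductive step being a single application of the (dual form of the) local LYM inequality. It suffices to establish the case $m = k+1$, since iterating the resulting bound $|\F^{(k+1)}|/\binom{n}{k+1} \geq |\F^{(k)}|/\binom{n}{k}$ from $k$ up to $m$ gives the general statement.

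For the base step, I would use the standard upper-shadow version of the local LYM inequality. Define the \emph{upper shadow} of $\A \subset \binom{[n]}{k}$ by $\nabla \A := \{B \in \binom{[n]}{k+1} : \exists A \in \A,\ A \subset B\}$. A double-counting argument on the bipartite containment graph between $\A$ and $\nabla \A$ (each $A \in \A$ is contained in exactly $n-k$ sets of $\binom{[n]}{k+1}$, while each $B \in \nabla \A$ contains at most $k+1$ sets of $\binom{[n]}{k}$) yields
\[
|\nabla \A| \cdot (k+1) \geq |\A| \cdot (n-k),
\]
which rearranges to $|\nabla \A|/\binom{n}{k+1} \geq |\A|/\binom{n}{k}$. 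Applying this with $\A = \F^{(k)}$, and using the fact that $\F$ is increasing to conclude $\nabla \F^{(k)} \subseteq \F^{(k+1)}$, we obtain
\[
\frac{|\F^{(k+1)}|}{\binom{n}{k+1}} \geq \frac{|\nabla \F^{(k)}|}{\binom{n}{k+1}} \geq \frac{|\F^{(k)}|}{\binom{n}{k}}.
\]

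This handles the inductive step; the base case $k = m$ is trivial. There is no real obstacle, since local LYM is classical, and monotonicity of $\F$ is exactly what converts the shadow inequality into a statement about the layers of $\F$ themselves. The only minor care needed is to state the local LYM inequality in the upward direction (equivalent, by complementation, to the more usual downward version recalled in Section~\ref{sec:sub:notations}); either direction works equally well here.
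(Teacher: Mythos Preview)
Your proposal is correct and follows exactly the approach the paper indicates: the paper does not spell out a proof of this proposition but simply states that it is ``a simple corollary of the local LYM inequality,'' which is precisely the inductive step via the upper-shadow bound that you carry out.
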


\begin{proof}[Proof of Proposition~\ref{prop:monotone-approx}]
By Proposition~\ref{Prop:WeakKK}, we have
\begin{align*}
\mu_{p}\left(\F\right) &\geq \sum_{m=k}^{n}\frac{\left|\F^{\left(m\right)}\right|}{\binom{n}{m}}\mu_{p}\left(\binom{[n]}{m}\right)
\ge\frac{\left|\F^{\left(k\right)}\right|}{\binom{n}{k}}\sum_{m=k}^{n}\mu_{p}\left(\binom{[n]}{m}\right) =
\frac{\left|\F^{\left(k\right)}\right|}{\binom{n}{k}} \mu_p(\{S \subset [n]: |S| \geq k\}) \\
&\ge\frac{\left|\F^{\left(k\right)}\right|}{\binom{n}{k}}\left(1-\delta\right),
\end{align*}
where the last inequality follows from the choice of $p$ by a standard Chernoff bound.
\end{proof}

\noindent The second proposition, proved by Dinur and Safra~\cite{Dinur-Safra}, shows that~(b) holds.
\begin{proposition}[Dinur and Safra]
\label{prop:juntas measure}
Let $j \in \mathbb{N}$, $\zeta,\epsilon\in\left(0,1\right)$, and $p\in [\zeta,1-\zeta]$. There exist
$\delta'=\delta'(\epsilon,j)$ and $n_0=n_0(j,\zeta,\epsilon) \in \mathbb{N}$ such that the following holds for all $n>n_0$.
For any $k\in [\zeta n,(1-\zeta)n] \cap \mathbb{N}$ such that $\left|p-\frac{k}{n}\right|<\delta'$ and for
any $j$-junta $\mathcal{J} \subset \p([n])$, we have
\begin{equation}\label{Eq:Juntas-Measure}
\left|\mu_{p}\left(\mathcal{J}\right)-\frac{\left|\J \cap \binom{[n]}{k} \right|}{\binom{n}{k}}\right|< \epsilon.
\end{equation}
\end{proposition}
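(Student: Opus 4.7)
The plan is to unpack both $\mu_p(\mathcal{J})$ and $|\mathcal{J} \cap \binom{[n]}{k}|/\binom{n}{k}$ as sums indexed by the generating family $\mathcal{G} \subset \mathcal{P}(J)$ of the junta $\mathcal{J} = \langle \mathcal{G} \rangle$, and then to compare the two sums term by term. Conceptually, this just amounts to the classical fact that the hypergeometric distribution $\mathrm{Hyp}(n,k,j)$ converges uniformly to $\mathrm{Bin}(j,p)$ as $n \to \infty$ with $k/n \to p$, applied here with $j$ fixed.

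First I would observe that because the coordinates of a $\mu_p$-random set are i.i.d.\ Bernoulli$(p)$,
\[
\mu_p(\mathcal{J}) = \sum_{B \in \mathcal{G}} p^{|B|}(1-p)^{j-|B|},
\]
while a direct count of $k$-subsets of $[n]$ with prescribed intersection with $J$ gives
\[
\frac{|\mathcal{J} \cap \binom{[n]}{k}|}{\binom{n}{k}} = \sum_{B \in \mathcal{G}} \frac{\binom{n-j}{k-|B|}}{\binom{n}{k}}.
\]
Since $|\mathcal{G}| \leq 2^j$, the triangle inequality reduces the problem to showing that $\bigl|\,p^{b}(1-p)^{j-b} - \binom{n-j}{k-b}/\binom{n}{k}\,\bigr| \leq \epsilon/2^j$ uniformly in $b \in \{0,1,\ldots,j\}$.

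For the per-term bound I would use the factorisation
\[
\frac{\binom{n-j}{k-b}}{\binom{n}{k}} = \prod_{i=0}^{b-1} \frac{k-i}{n-i}\cdot\prod_{i=0}^{j-b-1} \frac{n-k-i}{n-b-i},
\]
which is a product of exactly $j$ factors. The hypothesis $k \in [\zeta n,(1-\zeta)n]$, together with $n$ sufficiently large depending on $j$ and $\zeta$, guarantees that each factor is bounded away from $0$ and $1$ and differs from $k/n$ (respectively $1-k/n$) by $O_{j,\zeta}(1/n)$. Combined with $|p - k/n| < \delta'$, each factor is within $\delta' + O_{j,\zeta}(1/n)$ of $p$ (respectively $1-p$). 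Choosing $\delta'$ small enough depending on $(j,\epsilon)$ and $n_0$ large enough depending on $(j,\zeta,\epsilon)$, the product is within $\epsilon/2^j$ of $p^b(1-p)^{j-b}$, uniformly in $b$. Summing over $B \in \mathcal{G}$ then yields~\eqref{Eq:Juntas-Measure}.

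There is no real obstacle here: the argument is a routine continuity/approximation calculation. The only mildly delicate point is maintaining uniform control over the $j$ factors in the product above, and this is precisely the role played by the $\zeta$-dependence of $n_0$, ensuring that quantities such as $k-b$ and $n-k-j+b$ stay linearly large in $n$.
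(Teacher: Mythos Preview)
Your proposal is correct and follows essentially the same approach as the paper: both reduce to the atomic juntas $\{A : A \cap J = B\}$ (the paper by noting that it suffices to prove the bound for each such $\mathcal{J}_C$, you via the triangle inequality with an explicit $\epsilon/2^j$ per term), and then both compare $p^{b}(1-p)^{j-b}$ with $\binom{n-j}{k-b}/\binom{n}{k}$ by expanding the latter as a product of $j$ factors each close to $k/n$ or $1-k/n$. Your write-up is in fact slightly more explicit than the paper's about the $2^j$ factor and the product factorisation, but the argument is the same.
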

\begin{proof}
Assume w.l.o.g. that $\J$ depends only on the coordinates in $[j]$. Since $j$ is fixed, it is sufficient to prove that~\eqref{Eq:Juntas-Measure} holds when $\J = \mathcal{J}_{C}:=\left\{ A \subset [n]:\, A\cap\left[j\right]=C\right\}$ for any $C\subset \left[j\right]$. And indeed,
\begin{align*}
| \mu_{p}(\mathcal{J}) &-\frac{|\{ A\in \binom{[n]}{k} : A\cap [j] = C\}|}{\binom{n}{k}} | =
\left|p^{\left|C\right|}\left(1-p\right)^{j-\left|C\right|}-\frac{\binom{n-j}{k-\left|C\right|}}{\binom{n}{k}}\right| \\
&=\left|p^{\left|C\right|}\left(1-p\right)^{j-\left|C\right|}-
\frac{k \cdot \ldots \cdot \left(k-\left|C\right|+1\right) \cdot \left(n-k\right) \cdot \ldots \cdot \left(n-k+1-j+\left|C\right|\right)}{n\cdot \ldots \cdot \left(n-j+1\right)}\right| \\
&< \left|p^{|C|}\left(1-p\right)^{j-|C|} -\left(\frac{k}{n}\right)^{|C|}\left(\frac{n-k}{n}\right)^{j-|C|}\right| +o_{n \to \infty}\left(1\right) < \epsilon,
\end{align*}
where $n_0,\delta'$ clearly can be chosen such that the last inequality holds.
\end{proof}

\noindent The third proposition, a variant of which was proved by Dinur and Friedgut~\cite{DF09}, shows that~(c) holds.
\begin{proposition}[Dinur and Friedgut]
\label{prop:difference of juntas measure}Let $j \in \mathbb{N}$, $\zeta,\delta'' \in\left(0,1\right),$ and $p\in [\zeta,1-\zeta]$.
There exist $C=C\left(\zeta,j\right)>0$ and $n_{0}\left(t,\zeta,\delta''\right) \in \mathbb{N}$ such that the following holds for all $n>n_0$ and all
$k\in[\zeta n,(1-\zeta)n] \cap \mathbb{N}$ such that $p>\frac{k+\sqrt{2n\log 2}}{n}$. Let $\F \subset \p([n])$ be an
increasing family, and let $\mathcal{J}$ be a $j$-junta such that $\mu_{p}\left(\F\backslash\mathcal{J}\right)<\delta''$. Then
\[
\left|\left(\F\backslash\J\right)^{\left(k\right)}\right|<C\delta''\binom{n}{k}.
\]
\end{proposition}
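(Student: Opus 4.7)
The strategy is to decompose $\F \sm \J$ into slices indexed by the values on the junta coordinates, apply Proposition~\ref{prop:monotone-approx} to each slice (which inherits monotonicity from $\F$), and then convert combinatorial coefficients into $\mu_p$-coefficients. Without loss of generality, assume $\J$ depends only on $[j]$, and write $\J = \langle \G \rangle$ for some $\G \sub \p([j])$. For each $C \sub [j]$, define
\[
\F_C := \{A \sub [n] \sm [j] : A \cup C \in \F\}.
\]
Since $\F$ is increasing on $\p([n])$, each $\F_C$ is increasing on $\p([n] \sm [j])$. Writing $\mu_p$ also for the $p$-biased measure on $\p([n] \sm [j])$, the partition $\F \sm \J = \bigsqcup_{C \in \p([j]) \sm \G} \{A \cup C : A \in \F_C\}$ yields
\[
\mu_p(\F \sm \J) = \sum_{C \notin \G} p^{|C|}(1-p)^{j - |C|}\, \mu_p(\F_C),\qquad \bigl|(\F \sm \J)^{(k)}\bigr| = \sum_{C \notin \G}\bigl|\F_C^{(k-|C|)}\bigr|.
\]

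Next, I would apply Proposition~\ref{prop:monotone-approx} to each $\F_C$ with ground set $[n] \sm [j]$ and target layer $k - |C|$. The hypothesis requires a $\delta' \in (0,1)$ with $p \ge (k - |C| + \sqrt{2(n-j)\log(1/\delta')})/(n-j)$. Since $j$ is a fixed constant and our assumption $p > (k + \sqrt{2n \log 2})/n$ differs from this condition only by lower-order terms in $n$, for all $n$ larger than some $n_0(\zeta, j)$ one can choose such a $\delta'$ that is bounded away from $1$ in terms of $\zeta$ and $j$ alone. This gives
\[
\bigl|\F_C^{(k-|C|)}\bigr| \le \frac{1}{1 - \delta'}\binom{n-j}{k-|C|}\,\mu_p(\F_C).
\]
This bookkeeping step, absorbing the perturbations $k \mapsto k - |C|$ and $n \mapsto n - j$ into the constant, is the only computational point that requires care; conceptually it is routine.

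To finish, I would compare the combinatorial and $p$-biased coefficients. An elementary calculation gives
\[
\frac{\binom{n-j}{k-|C|}}{\binom{n}{k}} = (k/n)^{|C|}\bigl((n-k)/n\bigr)^{j - |C|}\bigl(1 + o_{j; n \to \infty}(1)\bigr),
\]
and since $p$ and $k/n$ both lie in $[\zeta, 1 - \zeta]$, the ratio between $\binom{n-j}{k-|C|}/\binom{n}{k}$ and $p^{|C|}(1-p)^{j-|C|}$ is $O_{\zeta, j}(1)$ uniformly in $C$. Substituting into the two identities above and summing over $C \notin \G$ yields
\[
\bigl|(\F \sm \J)^{(k)}\bigr| \le C\binom{n}{k}\sum_{C \notin \G} p^{|C|}(1-p)^{j - |C|}\,\mu_p(\F_C) = C\binom{n}{k}\,\mu_p(\F \sm \J) < C\delta''\binom{n}{k}
\]
for some $C = C(\zeta, j) > 0$, as required.
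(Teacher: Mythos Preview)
Your proof is correct and follows essentially the same approach as the paper: slice $\F\setminus\J$ according to the junta coordinates, apply Proposition~\ref{prop:monotone-approx} to each (increasing) slice, and convert between $\binom{n-j}{k-|C|}/\binom{n}{k}$ and $p^{|C|}(1-p)^{j-|C|}$ using that both $p$ and $k/n$ lie in $[\zeta,1-\zeta]$. The only cosmetic difference is that the paper bounds each individual slice by $C'\delta''\binom{n}{k}$ via a one-line contradiction (if a slice were too large, Proposition~\ref{prop:monotone-approx} would force $\mu_p(\F\setminus\J)>\delta''$), whereas you sum the slice bounds directly; your handling of the shift $n\mapsto n-j$, $k\mapsto k-|C|$ in the hypothesis of Proposition~\ref{prop:monotone-approx} is in fact slightly more careful than the paper's.
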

\begin{proof}
Suppose w.l.o.g. that $\J$ depends on the coordinates in $\left[j\right]$. Since $j$ is fixed, it is sufficient to prove that for any
$E\notin\J$, we have
\[
|\{ A\in\F^{(k)}\,:\, A\cap [j]=E\}| \leq C'\delta''\binom{n}{k},
\]
for some $C'=C'(\zeta,j)>0$. We show that
\begin{equation}\label{Eq:Diff-Measure}
|\{ A\in\F^{(k)}\,:\, A\cap [j]=E\}|<\frac{2\delta''}{p^{\left|E\right|}\left(1-p\right)^{j-\left|E\right|}}\binom{n-j}{k-\left|E\right|},
\end{equation}
which is sufficient, as the right hand side of~\eqref{Eq:Diff-Measure} is $\leq C'\delta''\binom{n}{k}$ by the proof of
Proposition~\ref{prop:juntas measure}. Suppose for a contradiction that~\eqref{Eq:Diff-Measure} fails.
By Proposition \ref{prop:monotone-approx} (with $\delta'=1/2$), we have
\[
\mu_{p}\left(\F_{[j]}^{E}\right)>\frac{2\delta''}{p^{\left|E\right|}\left(1-p\right)^{j-\left|E\right|}} \cdot (1-1/2) =\frac{\delta''}{p^{\left|E\right|}\left(1-p\right)^{j-\left|E\right|}}.
\]
Hence, $\mu_{p}\left(\F\backslash\J\right)\geq p^{\left|E\right|}\left(1-p\right)^{j-\left|E\right|} \mu_{p}\left(\F_{[j]}^{E}\right) >\delta''$, a contradiction. This completes the proof.
\end{proof}

\subsection{Cross-Intersecting Families}
\label{sec:sub:cross-intersecting}

\begin{definition}
Families $\F,\G \subset \p([n])$ are said to be \emph{cross-intersecting} if $A \cap B \neq \emptyset$ for any $A \in \F$ and $B \in \G$.
\end{definition}

The first generalization of the Erd\H{o}s-Ko-Rado theorem to cross-intersecting families was obtained in 1968 by
Kleitman~\cite{Kleitman68}, and since then, many extremal results on cross-intersecting families have been
proved. Such results assert that if $\F,\G$ are cross-intersecting then they cannot be `simultaneously large',
where the latter can be expressed in various ways. We need several such results. The first is a consequence of the Kruskal-Katona theorem \cite{Kruskal63,Katona66}.

\begin{lemma}
\label{lemma:cross-intersecting-slice}Let $n,k,l \in \mathbb{N}$ with $n \geq k+l$, let $r \in \mathbb{N} \cup \{0\}$, and let $\mathcal{A}\subset \binom{[n]}{k},\ \mathcal{B} \subset \binom{[n]}{l}$ be cross-intersecting families. Suppose that $\left|\mathcal{A}\right| \geq \binom{n}{k}-\binom{n-r}{k}$. Then $\left|\mathcal{B}\right|\le\binom{n-r}{l-r}$.
\end{lemma}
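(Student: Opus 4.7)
The plan is to reduce the cross-intersecting condition to a shadow inequality on $(n-l)$-element sets and then invoke the Kruskal--Katona theorem (in L\'ovasz's form) to extract the desired bound on $|\mathcal{B}|$.

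First, I would pass to the complementary family $\overline{\mathcal{B}} := \{[n] \setminus B : B \in \mathcal{B}\} \subset \binom{[n]}{n-l}$, which has $|\overline{\mathcal{B}}| = |\mathcal{B}|$. The cross-intersecting condition translates directly: for every $A \in \mathcal{A}$ and $B \in \mathcal{B}$ one has $A \cap B \neq \emptyset$, i.e.\ $A \not\subseteq [n]\setminus B$. Hence $\mathcal{A}$ is disjoint from the iterated lower shadow
\[
\Delta_k(\overline{\mathcal{B}}) := \bigl\{A \in \tbinom{[n]}{k} : A \subseteq \overline{B} \text{ for some } \overline{B} \in \overline{\mathcal{B}}\bigr\},
\]
and the hypothesis $|\mathcal{A}| \geq \binom{n}{k} - \binom{n-r}{k}$ gives $|\Delta_k(\overline{\mathcal{B}})| \leq \binom{n}{k}-|\mathcal{A}| \leq \binom{n-r}{k}$.

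Second, I would apply the contrapositive of L\'ovasz's form of Kruskal--Katona: if $\mathcal{G} \subseteq \binom{[n]}{s}$ and $|\Delta_m(\mathcal{G})| \leq \binom{x}{m}$ for some real $x \geq s$ and some integer $0 \leq m \leq s$, then $|\mathcal{G}| \leq \binom{x}{s}$. Taking $\mathcal{G} = \overline{\mathcal{B}}$, $s = n-l$, $m = k$ and $x = n-r$ (which is consistent with $n \geq k+l$ since it ensures $n-r \geq k$, provided $r \leq l$ so that $x \geq s$), one obtains
\[
|\mathcal{B}| = |\overline{\mathcal{B}}| \leq \binom{n-r}{n-l} = \binom{n-r}{l-r},
\]
as required. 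Tightness corresponds exactly to the configuration $\mathcal{A}^* = \{A : A \cap [r] \neq \emptyset\}$, $\mathcal{B}^* = \{B : [r] \subseteq B\}$, which serves as a useful sanity check that the parameters match.

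The remaining case $r > l$ requires only a short separate argument: the claimed bound reads $|\mathcal{B}| \leq 0$, so I must show $\mathcal{B} = \emptyset$. If some $B \in \mathcal{B}$ existed, cross-intersection would force $\mathcal{A} \cap \binom{[n]\setminus B}{k} = \emptyset$, yielding $|\mathcal{A}| \leq \binom{n}{k}-\binom{n-l}{k}$ (valid because $n \geq k+l$); combined with $r > l$, hence $\binom{n-l}{k} > \binom{n-r}{k}$, this contradicts the hypothesis. I do not anticipate a genuine obstacle in the proof: the only mildly delicate issues are matching the parameters in Kruskal--Katona correctly and handling the $r > l$ boundary, both of which are routine.
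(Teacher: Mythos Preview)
Your proof is correct and follows essentially the same approach as the paper: reduce the cross-intersecting condition to a shadow bound via complementation, then apply Kruskal--Katona. The only cosmetic difference is that the paper complements $\mathcal{A}$ and applies Kruskal--Katona in the forward direction (lower-bounding $|\partial^{n-k-l}(\bar{\mathcal{A}})|$ and using $\mathcal{B}\cap \partial^{n-k-l}(\bar{\mathcal{A}})=\emptyset$), whereas you complement $\mathcal{B}$ and apply the Lov\'asz form in the contrapositive direction; this forces you to treat $r>l$ separately, while the paper's version handles all $r$ uniformly.
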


\begin{proof}
Let $\bar{\mathcal{A}}:=\left\{[n] \setminus A\,:\, A\in\mathcal{A}\right\}$; then $\left|\bar{\mathcal{A}}\right| \geq \binom{n}{n-k}-\binom{n-r}{n-k-r}$. By the Kruskal-Katona theorem, we have $\left|\partial^{n-k-l}\left(\bar{\mathcal{A}}\right)\right| \geq {n \choose l} - {n-r \choose l-r}$. Since $\mathcal{A}$ and $\mathcal{B}$ are cross-intersecting, we have $\mathcal{B}\cap\partial^{n-k-l}\left(\bar{\mathcal{A}}\right)=\emptyset$. Hence, $\left|\mathcal{B}\right|\le\binom{n}{l}-\left|\partial^{n-k-l}\left(\bar{\mathcal{A}}\right)\right|\le\binom{n-r}{l-r}$, as required.
\end{proof}

Straightforward estimates for the binomial coefficients yield the following consequence.

\begin{lemma}
\label{lem:cross-intersecting}For each $\zeta>0$, there exists $c = c(\zeta)>1$ such that the following holds. Let $\zeta n\leq k_1,k_2\leq(\tfrac{1}{2}-\zeta)n$,
and let $\A \subset \binom{\left[n\right]}{k_1},\ \B \subset \binom{[n]}{k_2}$ be cross-intersecting
families. If $\mu\left(\A\right)>1-\epsilon$, then $\mu\left(\B\right) = O_{\zeta}\left(\epsilon^{c}\right)$.
\end{lemma}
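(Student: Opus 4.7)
The plan is to invoke Lemma~\ref{lemma:cross-intersecting-slice} with a well-chosen $r$ and then to estimate the resulting binomial-coefficient ratios. Writing $p_j = k_j/n \in [\zeta, 1/2-\zeta]$, I first dispose of the trivial range $\epsilon \leq 1/\binom{n}{k_1}$: the hypothesis then forces $\mathcal{A} = \binom{[n]}{k_1}$, and since $k_1 + k_2 \leq (1-2\zeta)n < n$, cross-intersection gives $\mathcal{B} = \emptyset$. Assuming henceforth $\epsilon > 1/\binom{n}{k_1}$, let $r$ be the largest nonnegative integer with $\binom{n-r}{k_1} \geq \epsilon \binom{n}{k_1}$. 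The hypothesis $|\mathcal{A}| > (1-\epsilon)\binom{n}{k_1}$ then yields $|\mathcal{A}| \geq \binom{n}{k_1} - \binom{n-r}{k_1}$, so Lemma~\ref{lemma:cross-intersecting-slice} gives $|\mathcal{B}| \leq \binom{n-r}{k_2-r}$.

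Next, using the standard identities $\binom{n-r}{k_2-r}\binom{n}{r} = \binom{n}{k_2}\binom{k_2}{r}$ and $\binom{n-r}{k_1}\binom{n}{r} = \binom{n}{k_1}\binom{n-k_1}{r}$, I would rewrite
\[
\mu(\mathcal{B}) \leq \frac{\binom{n-r}{k_2-r}}{\binom{n}{k_2}} = \frac{\binom{k_2}{r}}{\binom{n-k_1}{r}} \cdot \frac{\binom{n-r}{k_1}}{\binom{n}{k_1}}.
\]
A factor-by-factor comparison yields $\binom{k_2}{r}/\binom{n-k_1}{r} \leq (p_2/(1-p_1))^r$, and the crucial fact is $p_2/(1-p_1) \leq (1/2-\zeta)/(1/2+\zeta) < 1$. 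For the second ratio, maximality of $r$ gives $\binom{n-r-1}{k_1}/\binom{n}{k_1} < \epsilon$; combined with $\binom{n-r}{k_1}=\binom{n-r-1}{k_1}\cdot(n-r)/(n-r-k_1)$ this yields $\binom{n-r}{k_1}/\binom{n}{k_1} \leq \epsilon\cdot(n-r)/(n-r-k_1)$. If $r > n - k_1 - \zeta n$ then $k_2 - r < (p_1+p_2+\zeta-1)n \leq -\zeta n < 0$, forcing $\binom{n-r}{k_2-r}=0$ and rendering the bound trivial; so I may assume $r \leq n-k_1-\zeta n$, whence $(n-r)/(n-r-k_1) \leq 1/\zeta$ and the second ratio is at most $\epsilon/\zeta$.

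The final step is to lower-bound $r$. In the range $r \leq n-k_1-\zeta n$, the elementary estimate $1-k_1/(n-i) \geq \zeta/(p_1+\zeta)$ for all $i \leq r$ gives $\binom{n-r-1}{k_1}/\binom{n}{k_1} \geq (\zeta/(p_1+\zeta))^{r+1}$, so maximality forces $r \geq c_1(\zeta)\log(1/\epsilon)-1$ with $c_1(\zeta)=1/\log((p_1+\zeta)/\zeta)>0$. Combining,
\[
\mu(\mathcal{B}) \leq \frac{\epsilon}{\zeta}\left(\frac{p_2}{1-p_1}\right)^r \leq O_\zeta\!\left(\epsilon^{1+c'(\zeta)}\right),
\]
where $c'(\zeta)=c_1(\zeta)\log((1-p_1)/p_2)>0$ strictly because $p_2 < 1-p_1$ (which in turn holds since $p_1+p_2 \leq 1-2\zeta$). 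This produces the desired $c=1+c'(\zeta)>1$. The main obstacle is the bookkeeping for the two edge regimes---very small $\epsilon$ (handled at the outset) and $r$ close to $n-k_1$ (handled by showing $\mathcal{B}$ must be empty)---together with verifying that the resulting exponent $c$ depends only on $\zeta$ and is strictly above $1$.
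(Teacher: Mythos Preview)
Your proof is correct and follows precisely the route the paper indicates: the paper states only that the lemma follows from Lemma~\ref{lemma:cross-intersecting-slice} via ``straightforward estimates for the binomial coefficients,'' and your argument supplies exactly those estimates. One cosmetic point: you write $c_1(\zeta)=1/\log((p_1+\zeta)/\zeta)$ and $c'(\zeta)=c_1(\zeta)\log((1-p_1)/p_2)$ as if they depend only on $\zeta$, whereas the formulas involve $p_1,p_2$; to finish cleanly, just note that over $p_1,p_2\in[\zeta,\tfrac12-\zeta]$ one has $c_1\ge 1/\log(1/(2\zeta))$ and $\log((1-p_1)/p_2)\ge\log((\tfrac12+\zeta)/(\tfrac12-\zeta))$, so the exponent $1+c'$ and the implicit constant are uniformly bounded in terms of $\zeta$ alone.
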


The second extremal result we need is a consequence of Lemma \ref{lemma:mono-decreasing}. It was first proved in~\cite{EKL16+}; we reproduce the proof here, for completeness.
\begin{proposition}[\cite{EKL16+}, Lemma~2.7]
\label{lem:Cross Intersecting} Let $0<p \leq 1/2$, and let $\F,\G \subset \p([n])$ be cross-intersecting families. Then $\mu_{p}\left(\F\right)\le\left(1-\mu_{p}\left(\G\right)\right)^{\log_{1-p}p}$.
\end{proposition}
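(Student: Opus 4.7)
My plan is to reduce to the increasing case, use the cross-intersecting condition to obtain a \emph{linear} bound $\mu_p(\F)\le 1-\mu_{1-p}(\G)$, and then apply the monotonicity lemma (Lemma~\ref{lemma:mono-decreasing}) to bootstrap this linear bound into the desired polynomial bound.

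First, replace $\F$ and $\G$ by their upward closures $\F^{\uparrow}$ and $\G^{\uparrow}$. This only increases $\mu_p$, and the pair is still cross-intersecting, since any superset of an element of $\F$ still meets every superset of an element of $\G$. So I may assume both $\F$ and $\G$ are increasing. Next, introduce the dual family $\G^{*}:=\{A\subset[n]:\,[n]\setminus A\notin\G\}$. Because $\G$ is increasing, $\G^{*}$ is also increasing (if $A\subset A'$ and $[n]\setminus A\notin \G$, then the subset $[n]\setminus A'$ of $[n]\setminus A$ is not in $\G$ either). The cross-intersecting property says that for every $A\in\F$, the set $[n]\setminus A$ is not in $\G$, i.e.\ $A\in\G^{*}$. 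Hence $\F\subseteq\G^{*}$, and since $\mu_p(\G^{*})=1-\mu_p(\bar\G)=1-\mu_{1-p}(\G)$, we obtain the linear bound
\[
\mu_p(\F)\le 1-\mu_{1-p}(\G).
\]

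The remaining task is purely the inequality $1-\mu_{1-p}(\G)\le (1-\mu_p(\G))^{\log_{1-p}p}$. I would derive this by applying Lemma~\ref{lemma:mono-decreasing} to the increasing family $\G^{*}$. The lemma says $q\mapsto \log_q\mu_q(\G^{*})$ is monotone non-increasing, so evaluating at $q=p$ and $q=1-p$ (noting $p\le 1-p$) gives
\[
\log_{1-p}\mu_{1-p}(\G^{*})\le \log_{p}\mu_{p}(\G^{*}).
\]
Since $\mu_{1-p}(\G^{*})=1-\mu_{p}(\G)$ and $\mu_{p}(\G^{*})=1-\mu_{1-p}(\G)$, this becomes
\[
\log_{1-p}(1-\mu_p(\G))\le \log_{p}(1-\mu_{1-p}(\G)).
\]
Rearranging (keeping careful track of the signs of $\log p$ and $\log(1-p)$, both negative) yields $(1-\mu_p(\G))^{\log_{1-p}p}\ge 1-\mu_{1-p}(\G)$, which combined with the linear bound completes the proof.

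The only subtle point is the sign bookkeeping when converting between the two $\log_q$ expressions, but that is routine. The conceptual content is simply that the cross-intersecting condition gives a reflection $\F\subseteq\G^{*}$ relating $\mu_p$ of $\F$ to $\mu_{1-p}$ of $\G$, and that the monotonicity lemma from isoperimetry converts this reflection into the desired polynomial improvement at the single biased measure $p$.
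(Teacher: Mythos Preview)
Your proof is correct and follows essentially the same approach as the paper: both use the containment $\F\subseteq\G^{*}$ coming from cross-intersection together with Lemma~\ref{lemma:mono-decreasing}. The only cosmetic difference is that the paper applies the monotonicity lemma to $\F$ (obtaining $\mu_p(\F)\le(\mu_{1-p}(\F))^{\log_{1-p}p}\le(1-\mu_p(\G))^{\log_{1-p}p}$ directly), whereas you apply it to $\G^{*}$; your explicit reduction to increasing families is a detail the paper leaves implicit.
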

\begin{proof}
Since $\F$ and $\G$ are cross-intersecting, we have $\F \subset \G^{*}$. Hence, $\mu_{1-p}(\F) \leq \mu_{1-p}(\G^*) = 1-\mu_{1-p}(\bar{\G}) = 1-\mu_p(\G)$. Hence, by Lemma \ref{lemma:mono-decreasing}, $\mu_p(\F) \leq (\mu_{1-p}(\F))^{\log_{1-p}(p)} \leq (1-\mu_p(\G))^{\log_{1-p}(p)}$, as required.
\end{proof}

\noindent Note that equality holds in Proposition \ref{lem:Cross Intersecting} when $\mathcal{F} = \{S \subset [n]:\ B \subset S\} =: \textrm{AND}_{B}$, and $\mathcal{G} = \{S \subset [n]:\ B \cap S \neq \emptyset\} =:\textrm{OR}_{B}$, where $B \subset [n]$.
\newline

\noindent A special case of Proposition \ref{lem:Cross Intersecting}, with a much simpler proof, is as follows.
\begin{lemma}
\label{lem:cross}
If $\F,\G \subset \pn$ are cross-intersecting, then
\[
\mu_{1/2}\left(\F\right)+\mu_{1/2}\left(\G\right)\le1.
\]
\end{lemma}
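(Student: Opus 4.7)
The plan is to use the symmetry of the measure $\mu_{1/2}$ under complementation, together with the observation that the cross-intersecting property forces $\bar{\F}$ and $\G$ to be disjoint.

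First I would note that for any $A \in \F$ and $B \in \G$, the cross-intersecting hypothesis gives $A \cap B \neq \emptyset$, which in particular rules out $B = [n] \setminus A$. Equivalently, no element of $\bar{\F} = \{[n]\setminus A : A \in \F\}$ can lie in $\G$, so $\bar{\F} \cap \G = \emptyset$.

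Second, since the $\mu_{1/2}$ measure assigns weight $2^{-n}$ to every subset of $[n]$, it is invariant under the complementation map $A \mapsto [n] \setminus A$. In particular, $\mu_{1/2}(\bar{\F}) = \mu_{1/2}(\F)$.

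Combining these two facts gives
\[
\mu_{1/2}(\F) + \mu_{1/2}(\G) = \mu_{1/2}(\bar{\F}) + \mu_{1/2}(\G) = \mu_{1/2}(\bar{\F} \cup \G) \leq \mu_{1/2}(\pn) = 1,
\]
which is the desired inequality. There is no real obstacle here — the entire content of the lemma is the disjointness observation $\bar{\F} \cap \G = \emptyset$, which is immediate from the definition of cross-intersecting, plus the trivial fact that $\mu_{1/2}$ is symmetric under complementation. This is why the excerpt advertises it as a much simpler proof than Proposition \ref{lem:Cross Intersecting}: one does not need the isoperimetric inequality (Theorem \ref{thm:skewed-iso}) or the monotonicity Lemma \ref{lemma:mono-decreasing} that underlies the general $p$-biased bound.
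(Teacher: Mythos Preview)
Your proof is correct and is essentially the same as the paper's: both hinge on the observation that $\bar{\F}$ and $\G$ are disjoint together with the invariance of $\mu_{1/2}$ under complementation. The only cosmetic difference is that the paper phrases it probabilistically (the events $\{A\in\F\}$ and $\{[n]\setminus A\in\G\}$ are disjoint for $A\sim\mu_{1/2}$), whereas you phrase it set-theoretically via $\mu_{1/2}(\bar{\F}\cup\G)\le1$.
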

\begin{proof}
Let $A\sim\mu_{1/2}$. Then $\mu_{1/2}(\F)=\Pr[A\in\F]$. On the other hand, $[n]\setminus A$ is also distributed
according to $\mu_{1/2}$. Therefore, $\mu_{1/2}(\G)=\Pr[[n]\backslash A\in\G]$. Since the families $\F,\G$ are cross-intersecting, the events $\{A\in\F\}$ and $\{[n]\backslash A\in\G\}$ are disjoint. Therefore, we have
$$\mu_{1/2}(\F)+\mu_{1/2}(\G) = \Pr[A\in\F] + \Pr[[n]\backslash A\in\G] = \Pr[\{A \in \F\} \cup \{[n] \backslash A \in \G\}] \leq 1,$$
as required.
\end{proof}

\noindent Before we state the third extremal result, we need a few preliminaries.
\begin{notation}
For $X \subset \mathbb{N}$, $i \in \mathbb{N}$ and $\mathcal{F}\subset \binom{X}{i}$, we write $\mathcal{L}(\mathcal{F})$ for the initial segment of the lexicographic order on $\binom{X}{i}$ with size $|\mathcal{F}|$. We say a family $\mathcal{C} \subset \binom{X}{i}$ is {\em lexicographically ordered} if it is an initial segment of the lexicographic order on $\binom{X}{i}$, i.e., $\mathcal{L}(\mathcal{C})=\mathcal{C}$.
\end{notation}

\noindent The following result was proved by Hilton (see~\cite{Frankl87}, Theorem~1.2).
\begin{proposition}[Hilton]
\label{prop:hilton}
If $\mathcal{F} \subset \binom{[n]}{k}$, $\mathcal{G} \subset \binom{[n]}{l}$
are cross-intersecting, then $\mathcal{L}(\mathcal{F})$, $\mathcal{L}(\mathcal{G})$ are also cross-intersecting.
\end{proposition}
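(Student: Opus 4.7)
My plan is a simultaneous shifting argument followed by induction on $n$. For $j < i$, I would apply the shifting operator $\mathcal{S}_{ij}$ (as defined in the previous subsection) to both $\mathcal{F}$ and $\mathcal{G}$ in parallel. This preserves $|\mathcal{F}|$ and $|\mathcal{G}|$, and I claim it also preserves cross-intersection. Indeed, assuming for contradiction that there exist $A' \in \mathcal{S}_{ij}(\mathcal{F})$ and $B' \in \mathcal{S}_{ij}(\mathcal{G})$ with $A' \cap B' = \emptyset$, at least one of $A', B'$ was newly created; a short case analysis on the preimages in $\mathcal{F} \times \mathcal{G}$ (according to whether each preimage contained $i$ or $j$) produces a disjoint original pair, contradicting the hypothesis.

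Iterating $\mathcal{S}_{ij}$ exhaustively over all $j < i$ reduces matters to the case where both $\mathcal{F}$ and $\mathcal{G}$ are shifted. I would then induct on $n$ to upgrade `shifted' to `lex-initial'. In the inductive step, I partition $\mathcal{F}$ and $\mathcal{G}$ according to membership of the element $1$, writing $\mathcal{F}_+ = \{A \setminus \{1\} : 1 \in A \in \mathcal{F}\} \subset \binom{\{2,\ldots,n\}}{k-1}$ and $\mathcal{F}_- = \{A \in \mathcal{F} : 1 \notin A\} \subset \binom{\{2,\ldots,n\}}{k}$, and analogously for $\mathcal{G}$. Cross-intersection of $(\mathcal{F}, \mathcal{G})$ in $\binom{[n]}{\cdot}$ is equivalent to the three cross-intersection conditions $(\mathcal{F}_+, \mathcal{G}_-)$, $(\mathcal{F}_-, \mathcal{G}_+)$, and $(\mathcal{F}_-, \mathcal{G}_-)$ on $\{2,\ldots,n\}$. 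The induction hypothesis then provides the corresponding cross-intersecting lex-initial pairs. Finally, I would reassemble $\mathcal{L}(\mathcal{F})$ and $\mathcal{L}(\mathcal{G})$ using the recursive description of the lex order on $\binom{[n]}{k}$: the initial segment of size $s$ either lies entirely in $\{A : 1 \in A\}$ (when $s \leq \binom{n-1}{k-1}$) or contains all such sets and fills the remainder from $\binom{\{2,\ldots,n\}}{k}$ in lex order.

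The main obstacle is this reassembly step. The `plus' parts of $\mathcal{L}(\mathcal{F})$ and of $\mathcal{F}$ have different sizes in general, so one cannot directly identify the inductively-provided lex-initial pairs with the pairs $(\mathcal{L}(\mathcal{F})_+, \mathcal{L}(\mathcal{G})_-)$ actually required. Closing this gap requires the shadow containment $\partial(\mathcal{F}_-) \subseteq \mathcal{F}_+$, which follows from shiftedness of $\mathcal{F}$ (for each $A \in \mathcal{F}_-$ and each $i \in A$, the set $(A \setminus \{i\}) \cup \{1\}$ lies in $\mathcal{F}_1$, hence $A \setminus \{i\} \in \mathcal{F}_+$). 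This containment lets one compare the sizes of lex-initial segments across the two layers $\binom{\{2,\ldots,n\}}{k-1}$ and $\binom{\{2,\ldots,n\}}{k}$, and thereby transfer the cross-intersection conclusions from the pairs $(\mathcal{L}(\mathcal{F}_\pm), \mathcal{L}(\mathcal{G}_\mp))$ supplied by induction to the pairs $(\mathcal{L}(\mathcal{F})_\pm, \mathcal{L}(\mathcal{G})_\mp)$ needed for the conclusion.
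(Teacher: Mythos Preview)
The paper does not prove this proposition; it is quoted as a known result of Hilton (with a reference to \cite{Frankl87}, Theorem~1.2). So there is no in-paper argument to compare against, and your attempt stands on its own.

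Your first step --- simultaneous shifting preserves sizes and cross-intersection --- is correct and standard. The difficulty is exactly where you locate it: in the inductive reassembly, $|\mathcal{L}(\mathcal{F})_{+}|=\min(|\mathcal{F}|,\binom{n-1}{k-1})$ may strictly exceed $|\mathcal{F}_{+}|$, so the pair $(\mathcal{L}(\mathcal{F}_{+}),\mathcal{L}(\mathcal{G}_{-}))$ supplied by the induction hypothesis does not control $(\mathcal{L}(\mathcal{F})_{+},\mathcal{L}(\mathcal{G})_{-})$. You propose to repair this via $\partial(\mathcal{F}_{-})\subseteq\mathcal{F}_{+}$, but you do not say how a size inequality in the $(k-1)$-layer lets you push cross-intersection to a \emph{larger} lex-initial family there. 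In particular, the shadow of a lex-initial segment is not itself lex-initial in general, so one cannot simply nest; making this step rigorous seems to require a statement of Kruskal--Katona strength, which is essentially the content of the proposition. As written, the final paragraph names a relevant ingredient but does not establish that it suffices, so there is a genuine gap.

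A cleaner route, and the standard one, avoids the induction altogether and uses Kruskal--Katona directly. Cross-intersection of $\mathcal{F},\mathcal{G}$ is equivalent to $\mathcal{F}\cap\partial^{\,n-l-k}(\bar{\mathcal{G}})=\emptyset$, where $\bar{\mathcal{G}}=\{[n]\setminus B:B\in\mathcal{G}\}$; in particular $|\mathcal{F}|+|\partial^{\,n-l-k}(\bar{\mathcal{G}})|\le\binom{n}{k}$. Complementation reverses lex order, so $\overline{\mathcal{L}(\mathcal{G})}$ is a lex-\emph{final} segment of $\binom{[n]}{n-l}$; the order-reversing bijection $i\mapsto n+1-i$ identifies lex-final segments with colex-initial ones, and Kruskal--Katona then gives both that $\partial^{\,n-l-k}(\overline{\mathcal{L}(\mathcal{G})})$ is lex-final in $\binom{[n]}{k}$ and that its size is at most $|\partial^{\,n-l-k}(\bar{\mathcal{G}})|$. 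Hence $|\mathcal{L}(\mathcal{F})|+|\partial^{\,n-l-k}(\overline{\mathcal{L}(\mathcal{G})})|\le\binom{n}{k}$, and a lex-initial and a lex-final segment whose sizes sum to at most the layer size are disjoint --- which is exactly cross-intersection of $\mathcal{L}(\mathcal{F})$ and $\mathcal{L}(\mathcal{G})$.
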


\noindent The third extremal result we need on cross-intersecting families is the following, which was first proved in \cite{EKL16+}; we reproduce the proof for completeness.

\begin{lemma}[\cite{EKL16+}, Lemma~4.7]
\label{lem:Technical-long}
For any $\eta >0$ and any $C \geq 0$, there exists $c_0 = c_0(\eta,C)\in \mathbb{N}$ such that the following holds. Let $n,l,k,d \in \mathbb{N} \cup \{0\}$ with $n\ge (1+\eta)l+k+c_0$ and $l \geq k+c_0-1$. Suppose that $\mathcal{A} \subset \binom{[n]}{l},\, \B \subset \binom{[n]}{k}$
are cross-intersecting, and that
\[
\left|\A\right|\le\left|\mathrm{OR}_{\left[d\right]} \cap \binom{[n]}{l}\right|=\binom{n}{l}-\binom{n-d}{l}.
\]
Then
\[
\left|\mathcal{A}\right|+C\left|\mathcal{B}\right|\le\binom{n}{l}-\binom{n-d}{l}+C\binom{n-d}{k-d}.
\]
\end{lemma}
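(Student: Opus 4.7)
The plan is to apply Hilton's theorem to reduce to lex initial segments, perform a structural analysis via a parameter $s$ measuring how much of $\mathcal{A}$ is a ``pure'' $\mathrm{OR}$ family, recurse on a residual cross-intersecting pair, and close with a monotonicity argument that reduces the entire problem to a single binomial-ratio inequality.

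First, by Hilton's theorem (Proposition \ref{prop:hilton}), I may assume that $\mathcal{A}$ and $\mathcal{B}$ are lex initial segments of $\binom{[n]}{l}$ and $\binom{[n]}{k}$, respectively. Next, define
\[
s := \max\bigl\{r \geq 0 : \mathrm{OR}_{[r]} \cap \tbinom{[n]}{l} \subset \mathcal{A}\bigr\};
\]
the size hypothesis $|\mathcal{A}| \leq |\mathrm{OR}_{[d]} \cap \binom{[n]}{l}|$ gives $s \leq d$. In the boundary case $s = d$, one has $\mathcal{A} = \mathrm{OR}_{[d]} \cap \binom{[n]}{l}$; I would argue that every $B \in \mathcal{B}$ must contain $[d]$, for otherwise, picking $i \in [d] \setminus B$, the hypothesis $n - k - 1 \geq l - 1$ allows extending $\{i\}$ by $l - 1$ elements of $[n] \setminus (B \cup \{i\})$ to form some $A \in \mathcal{A}$ disjoint from $B$, a contradiction. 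This yields $|\mathcal{B}| \leq \binom{n-d}{k-d}$ and closes the case immediately.

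For $s < d$, I would decompose $\mathcal{A} = (\mathrm{OR}_{[s]} \cap \binom{[n]}{l}) \cup \mathcal{A}'$, where $\mathcal{A}'$ — identified with a lex initial segment of $\binom{[n-s]}{l}$ via $[n] \setminus [s] \cong [n-s]$ — satisfies $|\mathcal{A}'| < \binom{n-s-1}{l-1}$ by maximality of $s$. The same contradiction argument applied to $\mathrm{OR}_{[s]}$ forces $\mathcal{B} \subset \mathrm{AND}_{[s]}$, so $\mathcal{B} = \{[s] \cup B'' : B'' \in \mathcal{B}'\}$ for a lex initial $\mathcal{B}' \subset \binom{[n-s]}{k-s}$ cross-intersecting with $\mathcal{A}'$. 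Applying the lemma inductively to $(\mathcal{A}', \mathcal{B}')$ in $[n-s]$ with parameters $(l, k-s, 1, C)$ (with $s \geq 1$ strictly reducing $n$; for $s = 0$ a further decomposition $\mathcal{A} = \{\{1\} \cup T : T \in \mathcal{T}\}$ reduces to a CI pair $(\mathcal{T}, \mathcal{B}_0)$ in $[n-1]$ with the $l$-parameter decreased by 1), combining with $|\mathrm{OR}_{[s]} \cap \binom{[n]}{l}| = \binom{n}{l} - \binom{n-s}{l}$, and simplifying by Pascal yields $|\mathcal{A}| + C|\mathcal{B}| \leq f(s+1)$, where $f(r) := \binom{n}{l} - \binom{n-r}{l} + C\binom{n-r}{k-r}$.

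Finally, I would show $f$ is non-decreasing on $\{s+1, \ldots, d\}$. A direct Pascal computation gives $f(r+1) - f(r) = \binom{n-r-1}{l-1} - C\binom{n-r-1}{k-r}$, so the objective reduces to proving
\[
\binom{n-r-1}{l-1} \geq C\binom{n-r-1}{k-r} \qquad \text{for } r \in \{0, 1, \ldots, d-1\}.
\]
This binomial-ratio inequality is the main technical obstacle. Setting $N = n-r-1$ and $K = k-r$, the hypothesis $l \geq k + c_0 - 1$ gives $l - 1 - K \geq c_0 - 2$. Writing the ratio $\binom{N}{l-1}/\binom{N}{K}$ (after using $\binom{N}{l-1} = \binom{N}{N-l+1}$ to handle the case $l - 1 > N/2$) as a product $\prod_{j=K+1}^{\min(l-1,\,N-l+1)} (N-j+1)/j$, each factor is at least $1$; the factors whose index is sufficiently far from the median $N/2$ are each at least $1 + \eta/2$ by virtue of $n \geq (1+\eta)l + k + c_0$, and a careful count produces $\Omega_\eta(c_0)$ such large factors. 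Choosing $c_0 = c_0(\eta, C)$ of order $\log C / \log(1+\eta/2)$, with a bounded additive slack to absorb the one-step loss in the $s = 0$ recursion, then gives the required inequality, completing the proof.
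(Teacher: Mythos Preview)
Your overall strategy---reduce to lex initial segments via Hilton, decompose according to how much of an $\mathrm{OR}$ family is contained in $\mathcal{A}$, recurse, and finish with the monotonicity of $f(r)=\binom{n}{l}-\binom{n-r}{l}+C\binom{n-r}{k-r}$---is essentially the same as the paper's, and your treatment of the binomial-ratio inequality is correct in outline. The paper also reduces to lex segments, splits according to whether $\mathcal{A}=\mathcal{F}_1^{(l)}$ or $\mathcal{A}\subsetneq\mathcal{F}_1^{(l)}$, and closes with the inequality $\binom{n-1}{l-1}\ge C\binom{n-1}{k-1}$ (your $f(r+1)-f(r)\ge 0$ at $r=0$).

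There is, however, a genuine gap in your $s=0$ recursion. When $s=0$ you pass from $(\mathcal{A},\mathcal{B})$ with parameters $(n,l,k)$ to a cross-intersecting pair $(\mathcal{T},\mathcal{B}_0)$ with parameters $(n-1,l-1,k)$: the gap $l-k$ drops by~$1$. You claim this loss can be absorbed by ``bounded additive slack'' in $c_0$, i.e.\ that the $s=0$ step occurs at most once. But nothing prevents it from recurring: after your $s\ge 1$ step you land in an instance with $d'=1$ and necessarily $s'=0$, and from there you recurse again with large $d''$, where the new $s''$ may well be $0$, and so on. Each such step erodes $l-k$ by one with no compensating decrease in $k$, so after enough iterations the hypothesis $l\ge k+c_0-1$ fails and the binomial-ratio inequality is no longer available.

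The paper avoids this by organising the induction on $k$ and, in the analogue of your $s=0$ case, peeling off \emph{two} coordinates at once: when $\binom{n-2}{l-2}<|\mathcal{A}|<\binom{n-1}{l-1}$ (so $\{A:1,2\in A\}\subset\mathcal{A}\subset\{A:1\in A\}$), one shows $B\cap\{1,2\}\ne\varnothing$ for all $B\in\mathcal{B}$, and recurses on the cross-intersecting pair $\bigl(\mathcal{A}_{\{1,2\}}^{\{1\}},\mathcal{B}_{\{1,2\}}^{\{2\}}\bigr)$ inside $[n]\setminus\{1,2\}$, with new parameters $(n-2,l-1,k-1)$. Crucially, $l-k$ is \emph{preserved}, so the hypothesis $l'\ge k'+c_0-1$ survives every step and the induction (on $k$) terminates cleanly. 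Replacing your $s=0$ step with this two-coordinate slice fixes the argument; alternatively, you could rework your induction so that both $l$ and $k$ drop together in that branch.
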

\begin{proof}
We prove the lemma by induction on $k$. For $k=0$ the lemma holds trivially. Assume now that $k \geq 1$, and that the statement of the lemma holds for $k-1$. For $d=0$, the statement of the lemma holds trivially, so we may assume throughout that $d \geq 1$. By Proposition \ref{prop:hilton}, we may assume that $\mathcal{A}$ and $\mathcal{B}$ are lexicographically
ordered. Since $d \geq 1$, we have $|\mathcal{A}| \leq \binom{n}{l}-\binom{n-1}{l} = \binom{n-1}{l-1}$, so $\mathcal{A} \subset \F_1^{(l)}$, where $\F_{1}^{\left(i\right)}:=\left\{ A\in \binom{[n]}{i}:\,1\in A\right\}$ for each $i\in\left[n\right]$.

We split into two cases: $\A = \F_1^{(l)}$ and $\A \subsetneq \F_1^{(l)}$.

\mn \textbf{Case 1: $\A = \F^{(l)}_{1}$.} First note that $\mathcal{B}\subset \F_{1}^{\left(k\right)}$. Indeed,
suppose on the contrary that $B\in\B$ and $1\notin B$. Since $n \geq k+l$, there exists $A \in \binom{[n]}{l}$ such that $1 \in A$ and $A\cap B=\emptyset$.
Hence, $A\in\F_{1}^{\left(l\right)} = \A$, and $A\cap B=\emptyset$, a contradiction. Hence, we may assume that $\mathcal{B}= \F_{1}^{\left(k\right)}$. We must prove that
\begin{equation} \label{eq:basic-inequality} {n-1 \choose l-1} + C{n-1 \choose k-1} \leq \binom{n}{l}-\binom{n-d}{l}+C\binom{n-d}{k-d}\quad \forall d \geq 1.\end{equation}
This clearly holds (with equality) if $d=1$. To verify it for all $d \geq 2$ it suffices to show that
$${n-1 \choose l-1} + C{n-1 \choose k-1} \leq \binom{n}{l}-\binom{n-2}{l},$$
or equivalently,
$$C{n-1 \choose k-1} \leq \binom{n-2}{l-1}.$$
We have
\begin{align*} \frac{{n-1 \choose k-1}}{\binom{n-2}{l-1}} &= \frac{n-1}{n-k} \frac{{n-2 \choose k-1}}{\binom{n-2}{l-1}}
\leq 2\frac{(l-1)(l-2)\ldots k}{(n-k-1)(n-k-2)\ldots (n-l)}\\
& \leq 2 \left(\frac{l-1}{n-k-1}\right)^{l-k}
 \leq 2 \left(\frac{l-2}{l+\eta l + c -1}\right)^{c-1} \leq \frac{1}{C},
\end{align*}
provided $c_0$ is sufficiently large depending on $\eta$ and $C$, as required.

\mn \textbf{Case 2: $\A\subsetneq\F_1^{(l)}$.} If $\left|\A\right| \leq \binom{n-2}{l-2}$, then
\[
\left|\A\right|+C\left|\B\right|\le\binom{n-2}{l-2}+C\binom{n}{k} \leq \binom{n-1}{l-1} \leq \binom{n}{l}-\binom{n-d}{l}+C\binom{n-d}{k-d},
\]
where the second inequality holds since
\begin{align*}
\frac{{n \choose k}}{{n-1 \choose l-1}-{n-2 \choose l-2}} & = \frac{{n \choose k}}{{n-1 \choose l-2}}
=\frac{n}{n-k} \frac{{n-1 \choose k}}{{n-1 \choose l-2}}
\leq 2 \frac{(l-2)(l-3)\ldots (k+1)}{(n-k-1)(n-k-2)\ldots (n-l+2)}\\
& \leq 2 \left(\frac{l-2}{n-k-1}\right)^{l-k-2}
\leq 2 \left(\frac{l-2}{l+\eta l + c -1}\right)^{c-3}
\leq \frac{1}{C},
\end{align*}
provided $c_0$ is sufficiently large depending on $\eta$ and $C$. Hence, we may assume that
$${n-2 \choose l-2} \leq |\mathcal{A}| \leq {n-1 \choose l-1}.$$
Therefore, since $\A$ is lexicographically ordered, we have $\mathcal{A} \supset \{S \in \binom{[n]}{l}:\ 1,2 \in S\}$. Hence, $B \cap \{1,2\} \neq \emptyset$ for all $B \in \mathcal{B}$. (If there exists $B \in \mathcal{B}$ with $B \cap \{1,2\} = \emptyset$, then since $n \geq k+l$, there exists $A \in \binom{[n]}{l}$ with $A \cap B = \emptyset$ and $1,2 \in A$, but the latter implies $A \in \mathcal{A}$, a contradiction.) Therefore, since $\B$ is lexicographically ordered, we have $\B \supset \F_1^{(k)}$.

Observe that
$$\mathcal{A}_{\left\{ 1,2\right\} }^{\left\{ 1\right\} }\subseteq ([n]\setminus [2])^{(l-1)},\quad \B_{\left\{ 1,2\right\} }^{\left\{ 2\right\} }\subset ([n]\setminus[2])^{(k-1)}$$
are cross-intersecting, and trivially $|\mathcal{A}_{\left\{ 1,2\right\} }^{\left\{ 1\right\} }| \leq {n-2 \choose l-1}$. Hence, by the induction hypothesis (which we may apply since $(n-2) \geq (1+\eta)(l-1) + (k-1) +c_0$ and $l-1 \geq k-1 + c_0-1$, choosing $d=n-2$), we have
\[
\left|\mathcal{A}_{\left\{ 1,2\right\} }^{\left\{ 1\right\} }\right|+C\left|\B_{\left\{ 1,2\right\} }^{\left\{ 2\right\} }\right| \leq {n-2 \choose l-1},\]
and therefore,
\begin{align*} |\A|+C|\B| & = {n-2 \choose l-2} + \left|\mathcal{A}_{\left\{ 1,2\right\} }^{\left\{ 1\right\} }\right| + C {n-1 \choose k-1} + C\left|\B_{\left\{ 1,2\right\} }^{\left\{ 2\right\} }\right| \\
&\leq {n-2 \choose l-2} + {n-2 \choose l-1} + C{n-1 \choose k-1}\\
&= {n-1 \choose l-1} + C{n-1 \choose k-1}\\
& \leq \binom{n}{l}-\binom{n-d}{l}+C\binom{n-d}{k-d},
\end{align*}
using (\ref{eq:basic-inequality}) for the last inequality. This completes the proof.
\end{proof}

We need the following consequence of Lemma \ref{lem:Technical-long}.
\begin{proposition}
\label{lem:from David}Let $n,j,M \in \mathbb{N}$, $\zeta \in\left(0,1/2\right)$, and
let $\zeta n \leq k_{1},k_{2} \leq (\tfrac{1}{2}-\zeta)n$ be such that $\left|k_{2}-k_{1}\right|\le j$. There exists $c=c\left(M,\zeta,j\right) \in \mathbb{N}$ such that the
following holds. Let $\F \subset \binom{[n]}{k_1}$ and $\G\subset \binom{[n]}{k_2}$ be cross-intersecting families
such that for some $d\in\{c,c+1,\ldots,k_2\}$, we have
\begin{equation}\label{Eq:from_David1}
\binom{n-d}{k_{2}-d}\le\left|\G\right|\le\binom{n-c}{k_{2}-c}.
\end{equation}
Then $\left|\F\right|+M\left|\G\right|\le\binom{n}{k_{1}}-\binom{n-d}{k_{1}}+M\binom{n-d}{k_{2}-d}$.
\end{proposition}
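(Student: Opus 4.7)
First, set $\eta := 2\zeta/(1-2\zeta)$, let $c_0 := c_0(\eta, M)$ be the constant provided by Lemma~\ref{lem:Technical-long}, and define $c := c_0 + j + 2$. By Proposition~\ref{prop:hilton} I may replace $\F$ and $\G$ by their lex-initial segments of the same sizes; this preserves the cross-intersecting property and the values of $|\F|, |\G|$. A direct binomial-ratio calculation shows that the map $d \mapsto \binom{n}{k_1} - \binom{n-d}{k_1} + M\binom{n-d}{k_2-d}$ is non-decreasing on $\{c,\ldots,k_2\}$: its discrete difference at $d$ equals $\binom{n-d-1}{k_1-1} - M\binom{n-d-1}{k_2-d}$, and for $d \geq c$ the uniformity gap $k_1 - 1 - (k_2-d) \geq d - j - 1 \geq c_0 + 1$ combined with $k_1, k_2 \leq (\tfrac12-\zeta)n$ forces the binomial ratio $\binom{n-d-1}{k_1-1}/\binom{n-d-1}{k_2-d}$ to exceed $((\tfrac12+\zeta)/(\tfrac12-\zeta))^{c_0+1} \geq M$ by our choice of $c$. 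So I may assume $d$ is the smallest element of $\{c,\ldots,k_2\}$ with $|\G| \geq \binom{n-d}{k_2-d}$, and consequently $|\G| < \binom{n-d+1}{k_2-d+1}$ as well.

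By lex-order, $\G \supset \{G \in \binom{[n]}{k_2}: [d] \subset G\}$ and $\G \subset \{G : [d-1] \subset G\}$. Cross-intersection combined with $k_1 + k_2 \leq (1-2\zeta)n < n$ then forces every $F \in \F$ to hit $[d]$. I partition $\F = \F_A \sqcup \F_B$ according to whether $F \cap [d-1] \neq \emptyset$, which gives $|\F_A| \leq \binom{n}{k_1} - \binom{n-d+1}{k_1}$ and forces $d \in F$ for every $F \in \F_B$. Similarly, split $\G = \G_0 \sqcup \G_1$ with $\G_0 := \{G \in \binom{[n]}{k_2} : [d] \subset G\}$, so $|\G_0| = \binom{n-d}{k_2-d}$ and every $G \in \G_1$ contains $[d-1]$ but not $d$.

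Next, I pass to a smaller universe by defining $\tilde\F := \{F \setminus \{d\} : F \in \F_B\} \subset \binom{\{d+1,\ldots,n\}}{k_1-1}$ and $\tilde\G := \{G \setminus [d-1] : G \in \G_1\} \subset \binom{\{d+1,\ldots,n\}}{k_2-d+1}$. Using $d \in F$, $d \notin G$ and $F \cap [d-1] = \emptyset$, a direct computation yields $F \cap G = \tilde F \cap \tilde G$, so $\tilde\F$ and $\tilde\G$ are cross-intersecting in the ground set of size $n' := n-d$. The reduced uniformity gap $(k_1-1)-(k_2-d+1) = d + (k_1-k_2) - 2 \geq c_0$ together with $n' \geq (1+\eta)(k_1-1) + (k_2-d+1) + c_0$ (which holds for $n$ large by our choice of $\eta$) lets me apply Lemma~\ref{lem:Technical-long} with $l = k_1-1$, $k = k_2-d+1$, $C = M$, and some $d'$ chosen so that $|\tilde\F| \leq \binom{n'}{l} - \binom{n'-d'}{l}$. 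The lemma then yields $|\tilde\F| + M|\tilde\G| \leq \binom{n'}{k_1-1} - \binom{n'-d'}{k_1-1} + M\binom{n'-d'}{k_2-d+1-d'}$.

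Finally, I substitute into $|\F| + M|\G| = |\F_A| + |\tilde\F| + M\binom{n-d}{k_2-d} + M|\tilde\G|$ and apply Pascal's identity $\binom{n-d+1}{k_1} = \binom{n-d}{k_1} + \binom{n-d}{k_1-1}$: the terms involving $\binom{n-d+1}{k_1}$ and $\binom{n'}{k_1-1} = \binom{n-d}{k_1-1}$ telescope to produce an upper bound of $\binom{n}{k_1} - \binom{n-d}{k_1} + M\binom{n-d}{k_2-d}$ plus the residual $M\binom{n-d-d'}{k_2-d+1-d'} - \binom{n-d-d'}{k_1-1}$. The residual is non-positive by the exact same binomial-ratio inequality as in the monotonicity step, since the reduced uniformity gap $k_1-1-(k_2-d+1-d') \geq c_0 + d' \geq c_0$ guarantees the ratio $\binom{n-d-d'}{k_1-1}/\binom{n-d-d'}{k_2-d+1-d'} \geq M$. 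This gives the target bound. I expect the main obstacle to be case-handling when some of the binomials vanish (for instance $d' = n-d$ or $k_2-d+1-d' \leq 0$), which corresponds to the near-extremal configurations; these edge cases should be dispatched by direct verification.
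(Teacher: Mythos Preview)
Your overall strategy --- lex-order via Proposition~\ref{prop:hilton}, strip off some initial coordinates, and apply Lemma~\ref{lem:Technical-long} --- is the same as the paper's. The difference is \emph{which} coordinates you strip: you remove the first $d-1$ (and the element $d$) and then have to run a monotonicity argument in $d$ together with a residual estimate; the paper removes only the first $c$ coordinates and applies Lemma~\ref{lem:Technical-long} once, with no monotonicity or residual step.

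There is a genuine gap in your binomial-ratio estimates, and it affects both the monotonicity step and the residual step. You claim that
\[
\frac{\binom{n-d-1}{k_1-1}}{\binom{n-d-1}{k_2-d}}\ \ge\ \left(\frac{\tfrac12+\zeta}{\tfrac12-\zeta}\right)^{c_0+1},
\]
arguing that each factor $(m-i+1)/i$ in the product (with $m=n-d-1$) is at least $(\tfrac12+\zeta)/(\tfrac12-\zeta)$ because $k_1\le(\tfrac12-\zeta)n$. But that last inequality compares $k_1$ to $n$, not to $m=n-d-1$; since $d$ ranges up to $k_2\approx(\tfrac12-\zeta)n$, the top index $k_1-1$ can exceed $m/2$, and then individual factors drop below $1$. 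For a concrete failure, take $\zeta=0.01$, $k_1=k_2$, and $d$ moderately large: the factors near $i=k_1-1$ are roughly $(n-d-k_1)/(k_1-1)\approx 4\zeta/(1-2\zeta)<1$. The product may still be $\ge M$ (indeed $m-a-b=n-k_1-k_2\ge 2\zeta n$, so one can show the ratio is exponentially large in $n$ by comparing $\binom{m}{m-a}$ with $\binom{m}{b}$), but this is a different and more delicate argument than the one you gave, and the same issue recurs verbatim in your residual bound $M\binom{n-d-d'}{k_2-d+1-d'}\le\binom{n-d-d'}{k_1-1}$.

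The paper sidesteps all of this. Since $|\G|\le\binom{n-c}{k_2-c}$ and $\G$ is lex-ordered, $\G\subset\mathcal S_{[c]}$; hence $\G_{[c]}^{[c]}$ is lex-ordered and, using $|\G|\ge\binom{n-d}{k_2-d}$, contains $\mathcal S_{\{c+1,\ldots,d\}}$. Cross-intersection then forces $\F_{[c]}^{\varnothing}\subset\mathrm{OR}_{\{c+1,\ldots,d\}}$, which is exactly the size hypothesis of Lemma~\ref{lem:Technical-long} with $n'=n-c$, $l'=k_1$, $k'=k_2-c$, $d'=d-c$. One application of the lemma plus the trivial bound $|\F|\le\binom{n}{k_1}-\binom{n-c}{k_1}+|\F_{[c]}^{\varnothing}|$ gives the result directly --- no monotonicity in $d$, no residual term, and all binomial parameters stay comfortably in range because only a constant $c$ is removed from $n$.
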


\begin{proof}
By Lemma~\ref{prop:hilton}, we may assume that $\mathcal{F}$ and $\mathcal{G}$ are lexicographically
ordered. In addition, by an appropriate choice of $c$, we may assume throughout that $n \geq n_0$ for any $n_0=n_0(M,\zeta,j) \in \mathbb{N}$.

Consider the families $\mathcal{F}_{\left[c\right]}^{\emptyset}$ and $\mathcal{G}_{\left[c\right]}^{\left[c\right]}$ (for $c$ to be specified below), which are clearly cross-intersecting. As $\mathcal{G}$ is lexicographically
ordered, the assumption~\eqref{Eq:from_David1} implies $\mathcal{G} \subset \s_{[c]}$. Moreover, $\mathcal{G}_{\left[c\right]}^{\left[c\right]}$ is also lexicographically ordered, and hence, by~\eqref{Eq:from_David1} we have $\s_{\{c+1,\ldots,d\}} \subset \mathcal{G}_{\left[c\right]}^{\left[c\right]}$. Since
$\mathcal{F}_{\left[c\right]}^{\emptyset}$ cross-intersects $\mathcal{G}_{\left[c\right]}^{\left[c\right]}$, this implies $\mathcal{F}_{\left[c\right]}^{\emptyset} \subset \mathrm{OR}_{\{c+1,\ldots,d\}}$, and thus,
\[
|\mathcal{F}_{\left[c\right]}^{\emptyset}| \leq \binom{n-c}{k_1}-\binom{n-d}{k_1}.
\]
This allows us to apply Lemma~\ref{lem:Technical-long} to the cross-intersecting families
\[
\mathcal{F}_{\left[c\right]}^{\emptyset}\subset \left(\left[n\right]\setminus\left[c\right]\right){}^{(k_{1})},
\quad\mathcal{G}_{\left[c\right]}^{\left[c\right]}\subset
\left(\left[n\right]\backslash\left[c\right]\right)^{\left(k_{2}-c\right)},
\]
with the parameters $n'=n-c$, $l'=k_{1}$, $k'=k_{2}-c$, $d'=d-c$, $C'=M$ and $\eta' = \zeta$, provided that $c:=c_{0}(\zeta,M)+j$, to obtain
\begin{equation}\label{Eq:from_David2}
\left|\mathcal{F}_{\left[c\right]}^{\emptyset}\right|+M\left|\mathcal{G}_{\left[c\right]}^{\left[c\right]}\right|
\le \binom{n-c}{k_{1}}-\binom{n-d}{k_{1}}+M\binom{n-d}{k_{2}-d}.
\end{equation}
(Note that we have $n' \geq (1+\eta')l' + k' + c_0$, provided $n_0(M,\zeta,j)$ is sufficiently large.) Finally, we clearly have
\[
\left|\mathcal{F}\right|\le\binom{n}{k_{1}}-\binom{n-c}{k_{1}}+|\mathcal{F}_{\left[c\right]}^{\emptyset}|,
\]
and the assumption~\eqref{Eq:from_David1} implies $|\mathcal{G}_{\left[c\right]}^{\left[c\right]}|=|\mathcal{G}|$. Therefore,~\eqref{Eq:from_David2} yields
\begin{align*}
\left|\mathcal{F}\right|+M\left|\mathcal{G}\right| & \leq \binom{n}{k_{1}}-\binom{n-c}{k_{1}}+\left|\mathcal{F}_{\left[c\right]}^{\emptyset}\right|+M\left|\mathcal{G}_{\left[c\right]}^{\left[c\right]}\right|\\
 & \le\binom{n}{k_{1}}-\binom{n-c}{k_{1}}+\binom{n-c}{k_{1}}-\binom{n-d}{k_{1}}+M\binom{n-d}{k_{2}-d}\\
 & =\binom{n}{k_{1}}-\binom{n-d}{k_{1}}+M\binom{n-d}{k_{2}-d},
\end{align*}
as asserted.
\end{proof}

We also need a `stability result' for cross-intersecting families, giving structural information about one of the families, when the other family is somewhat large. A similar result (with a similar proof) appears in Dinur and Friedgut \cite[Lemma 3.2]{DF09}. One of the main tools is the biased version of Friedgut's Junta Theorem (Theorem \ref{thm:fj}).

\begin{lemma}
\label{lem:dfv}For any $\zeta,\epsilon \in (0,1)$, there exist $s=s(\zeta,\epsilon),\ n_0 = n_{0}(\zeta,\epsilon) \in \mathbb{N}$ such that the following holds. Let $n\ge n_{0}$, let $0 \leq k_1,k_2\leq (\frac{1}{2}-\zeta)n$, let $\f \subset \binom{\left[n\right]}{k_1},\ \g \subset \binom{[n]}{k_2}$ be cross-intersecting, and suppose that $\mu\left(\g\right)\geq\epsilon$. Then there
exists $S \subset [n]$ such that $|S| \leq s$ and $\mu\left(\f_{S}^{\varnothing}\right)<\frac{\epsilon}{2}$.
\end{lemma}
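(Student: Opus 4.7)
The plan is to reduce to the $p$-biased measure setting, apply Friedgut's junta theorem to $\mathcal{G}^{\uparrow}$, and then exploit the cross-intersection via Proposition \ref{lem:Cross Intersecting}.

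First, pass to the up-closures $\mathcal{F}^{\uparrow}$ and $\mathcal{G}^{\uparrow}$, which remain cross-intersecting. Set $p_0 := \max(k_1,k_2)/n + \sqrt{2\log 4 / n}$; for $n$ large enough $p_0 < 1/2-\zeta/2$, and Proposition \ref{prop:monotone-approx} yields $\mu_q(\mathcal{G}^{\uparrow}) \geq (3/4)\mu(\mathcal{G}) \geq 3\epsilon/4$ for every $q \in [p_0,1)$. Since $\int_{p_0}^{1/2-\zeta/2} I^q(\mathcal{G}^{\uparrow})\,\textup{d}q \leq 1$ by the Margulis--Russo lemma, and this interval has length $\Omega(\zeta)$, averaging produces some $p^* \in [p_0,1/2-\zeta/2]$ with $I^{p^*}(\mathcal{G}^{\uparrow}) = O(1/\zeta)$. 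Friedgut's junta theorem (Theorem \ref{thm:fj}) applied at $p^*$ with error parameter $\delta$ (to be chosen below) then gives an $S$-junta $\mathcal{J} = \langle \mathcal{H}\rangle$ with $|S| \leq s = s(\zeta,\delta)$ and $\mu_{p^*}(\mathcal{G}^{\uparrow}\triangle\mathcal{J}) < \delta$; replacing $\mathcal{J}$ by the optimal $S$-junta approximation to $\mathcal{G}^{\uparrow}$ (whose error is at most as large), I may assume that $E \in \mathcal{H}$ iff $\alpha_E := \mu_{p^*}((\mathcal{G}^{\uparrow})_S^E) \geq 1/2$, where the biased measure on the right-hand side is taken on $\mathcal{P}([n]\setminus S)$.

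Next, I extract a pattern $E^* \in \mathcal{H}$ with $\alpha_{E^*}$ very close to $1$. For a small $\eta > 0$, the total $p^*$-weight of patterns $E \in \mathcal{H}$ with $\alpha_E < 1-\eta$ contributes at least $\eta$ times that weight to $\mu_{p^*}(\mathcal{G}^{\uparrow}\triangle\mathcal{J})$, so it is at most $\delta/\eta$. Since $\mu_{p^*}(\mathcal{J}) \geq \mu_{p^*}(\mathcal{G}^{\uparrow})-\delta \geq 3\epsilon/4-\delta$, choosing $\delta \ll \epsilon\eta$ guarantees the existence of $E^* \in \mathcal{H}$ with $\alpha_{E^*} \geq 1-\eta$. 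On the slice indexed by $E^*$, the families $(\mathcal{F}_S^{\emptyset})^{\uparrow}$ and $(\mathcal{G}^{\uparrow})_S^{E^*}$ are cross-intersecting increasing families in $\mathcal{P}([n]\setminus S)$: any $A \in \mathcal{F}_S^{\emptyset}$ satisfies $A \cap S = \emptyset$, so its intersection with any $B \cup E^* \in \mathcal{G}^{\uparrow}$ lies entirely in $B$. Proposition \ref{lem:Cross Intersecting} therefore gives
\[
\mu_{p^*}((\mathcal{F}_S^{\emptyset})^{\uparrow}) \leq (1-\alpha_{E^*})^{\log_{1-p^*}p^*} \leq \eta^{\beta},
\]
with $\beta := \log_{1-p^*}p^* \geq 1+c(\zeta) > 1$ since $p^* \leq 1/2-\zeta/2$.

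Finally, one further application of Proposition \ref{prop:monotone-approx} on $[n]\setminus S$ at level $k_1$ translates this biased bound into $\mu(\mathcal{F}_S^{\emptyset}) \leq (4/3)\eta^{\beta}$. Choosing $\eta = \eta(\zeta,\epsilon)$ small enough that $(4/3)\eta^{\beta} < \epsilon/2$, and then $\delta \ll \epsilon\eta$, yields the conclusion with $s = s(\zeta,\epsilon)$. The main obstacle I anticipate is the weight-accounting step that extracts $E^*$: the junta error $\delta$ must be balanced delicately against $\eta$ and the measure of $\mathcal{G}^{\uparrow}$. A secondary issue is verifying that $p^*$ simultaneously exceeds the thresholds needed for Proposition \ref{prop:monotone-approx} to apply at levels $k_1$ and $k_2$ (on $[n]$ and on $[n]\setminus S$) while remaining bounded away from $1/2$ so that $\beta > 1$ in Proposition \ref{lem:Cross Intersecting}.
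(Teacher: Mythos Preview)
Your argument is correct, but it takes a genuinely different route from the paper's. The paper applies Friedgut's junta theorem to $\f^{\uparrow}$ rather than to $\g^{\uparrow}$. It first uses the elementary bound $\mu_{1/2}(\f^{\uparrow})+\mu_{1/2}(\g^{\uparrow})\le 1$ (Lemma~\ref{lem:cross}) together with Proposition~\ref{prop:monotone-approx} to get $\mu_{1/2}(\f^{\uparrow})\le 1-\epsilon/2$, hence $\mu_q(\f^{\uparrow})\le 1-\epsilon/2$ for all $q\le 1/2$ by monotonicity. After locating $q$ with bounded total influence (exactly as you do) and obtaining a $J$-junta $\langle\h\rangle$ with $\mu_q(\f^{\uparrow}\triangle\langle\h\rangle)<\epsilon^2/16$, the paper observes that if $\mu_q\bigl((\f^{\uparrow})_J^{\varnothing}\bigr)\ge\epsilon/4$ then, because $\f^{\uparrow}$ is increasing, $\mu_q\bigl((\f^{\uparrow})_J^{B}\bigr)\ge\epsilon/4$ for \emph{every} $B\subset J$; summing over $B\notin\h$ and using $\mu_q(\langle\h\rangle)\le\mu_q(\f^{\uparrow})+\epsilon^2/16\le 1-\epsilon/2+\epsilon^2/16$ forces $\mu_q(\f^{\uparrow}\setminus\langle\h\rangle)>\epsilon^2/16$, a contradiction. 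This avoids both the extraction of a ``heavy slice'' $E^*$ and the sharper cross-intersecting inequality of Proposition~\ref{lem:Cross Intersecting}.

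In short: your proof works by finding a slice of $\g^{\uparrow}$ that is nearly full and then invoking Proposition~\ref{lem:Cross Intersecting} to squeeze $(\f_S^{\varnothing})^{\uparrow}$; the paper instead bounds $\mu_q(\f^{\uparrow})$ away from $1$ globally, so that the junta approximation to $\f^{\uparrow}$ directly controls the $\varnothing$-slice via the monotonicity trick $(\f^{\uparrow})_J^B\supset(\f^{\uparrow})_J^{\varnothing}$. The paper's path is a little shorter and uses only the crude cross-intersecting bound at $p=1/2$; yours is slightly more involved in the bookkeeping (the $\delta$ versus $\eta$ balance you flagged) but is equally valid, and the secondary threshold issue you anticipated is harmless since $|S|=O_{\zeta,\epsilon}(1)$.
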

Informally, this lemma says that if we have a pair of cross-intersecting families of uniformity bounded away from $n/2$, and one of the families occupies a positive fraction of its layer, then all but a small number of the sets in the other family have nontrivial intersection with some set of bounded size.

\begin{proof}[Proof of lemma \ref{lem:dfv}]
Clearly, the families $\f^{\uparrow}$ and $\g^{\uparrow}$
are cross-intersecting. Hence, by Lemma \ref{lem:cross}, we have
\[\mu_{1/2}(\f^{\uparrow})+\mu_{1/2}(\g^{\uparrow})\le1.\]
Therefore, using Proposition \ref{prop:monotone-approx}, we have
\[\mu_{1/2}(\f^{\uparrow})\le1-\mu_{1/2}(\g^{\uparrow}) \leq 1-\mu(\g)+\exp(-\zeta^2n/2) \leq 1-\epsilon + \exp(-\zeta^2n/2) \le1-\epsilon/2,\]
provided $n$ is sufficiently large depending on $\zeta$ and $\epsilon$. Let $p=\tfrac{1}{2}(\frac{k_1}{n}+\frac{1}{2})$. The Margulis-Russo Lemma and the Mean Value Inequality imply that there exists $q\in(p,1/2)$ such that
\[
I^{q}(\f^{\uparrow})=\frac{\ud \mu_{q}\left(\f^{\uparrow}\right)}{\ud q}\le \frac{\mu_{1/2}(\f^{\uparrow})-\mu_p(\f^{\uparrow})}{\frac{1}{2}\left(\tfrac{1}{2}-\tfrac{k_1}{n}\right)}\le\frac{2}{\zeta}.
\]
 By Theorem \ref{thm:fj}, there exists $J \subset [n]$ with $|J| = O_{\zeta,\epsilon}(1)$ and a $J$-junta $\langle \h \rangle$ (i.e. $\h \subset \mathcal{P}(J)$), such that $\mu_{q}(\f^{\uparrow} \Delta\left\langle \h\right\rangle)<\frac{\epsilon^{2}}{16}$. Note also that since $q \leq 1/2$, $\f^{\uparrow}$ is monotone, and the function $x \mapsto \mu_x(\mathcal{A})$ is monotone non-decreasing for any increasing family $\mathcal{A} \subset \pn$, we have
\[
\mu_{q}(\f^{\uparrow})\le \mu_{1/2}(\f^{\uparrow}) \leq 1-\epsilon/2.
\]

\begin{claim}
$\mu_{q}\left((\f^{\uparrow})_{J}^{\varnothing}\right)<\frac{\epsilon}{4}.$ \end{claim}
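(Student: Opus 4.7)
The strategy is to exploit the dichotomy forced by $\J := \langle \h \rangle$ being a $J$-junta: the slice $\J_J^{\emptyset}$ equals either $\p([n]\setminus J)$ (when $\emptyset \in \h$, so $\mu_q = 1$) or $\emptyset$ (when $\emptyset \notin \h$, so $\mu_q = 0$). I would first convert the junta approximation into a bound on slices: conditioning on the event $\{S \cap J = \emptyset\}$, which has $\mu_q$-probability $(1-q)^{|J|}$, yields
\[ \mu_q\bigl((\f^{\uparrow})_J^{\emptyset} \,\Delta\, \J_J^{\emptyset}\bigr) \;\leq\; \frac{\epsilon^2/16}{(1-q)^{|J|}}. \]
Because $|J|$ depends only on $\zeta$ and on the parameter fed into Friedgut's Junta Theorem, a standard two-stage choice (take the Friedgut parameter $\delta$ small enough, in terms of $\zeta,\epsilon$, that $\delta\cdot 2^{|J|} < \epsilon/8$) lets me assume the right-hand side is at most $\epsilon/8$.

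Now I split into cases. If $\emptyset \notin \h$, then $\mu_q(\J_J^{\emptyset}) = 0$, and the displayed inequality gives immediately $\mu_q((\f^{\uparrow})_J^{\emptyset}) \leq \epsilon/8 < \epsilon/4$, which is the claim. If instead $\emptyset \in \h$, I will derive a contradiction. In this case $\mu_q(\J_J^{\emptyset}) = 1$, so the displayed inequality forces $\mu_q((\f^{\uparrow})_J^{\emptyset}) \geq 1 - \epsilon/8$. Since $\f^{\uparrow}$ is increasing, the function $B \mapsto \mu_q((\f^{\uparrow})_J^{B})$ is monotone non-decreasing in $B \subset J$, so
\[ \mu_q(\f^{\uparrow}) \;=\; \sum_{B \subset J} q^{|B|}(1-q)^{|J|-|B|}\,\mu_q\!\bigl((\f^{\uparrow})_J^{B}\bigr) \;\geq\; \mu_q\!\bigl((\f^{\uparrow})_J^{\emptyset}\bigr) \;\geq\; 1 - \epsilon/8. \]
This contradicts the bound $\mu_q(\f^{\uparrow}) \leq \mu_{1/2}(\f^{\uparrow}) \leq 1 - \epsilon/2$ that was just established above the claim, so this case does not in fact occur.

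The one mildly delicate point is the quantitative re-tuning of Friedgut's approximation parameter, since $|J|$ is itself determined by that parameter and appears in the exponent of $(1-q)^{|J|}$. Because Friedgut's theorem produces $|J| \leq \exp(O_\zeta(1/\delta))$, the needed inequality $\delta/(1-q)^{|J|} < \epsilon/8$ is easily arranged by picking $\delta$ as a suitable (rapidly decreasing) function of $\epsilon$ and $\zeta$; this choice is what determines the final bound $s = s(\zeta,\epsilon)$ on $|J|$ promised in the statement of the lemma.
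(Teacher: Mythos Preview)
There is a genuine quantitative gap in your retuning step. You need $\delta/(1-q)^{|J|} < \epsilon/8$, where $\delta$ is the Friedgut approximation parameter and $|J| \leq \exp(C_\zeta/\delta)$ is the bound coming out of Friedgut's theorem. Since $q$ is produced by the Mean Value Theorem and can lie anywhere in $(p,1/2)$, the quantity $1-q$ may be as small as (essentially) $1/2$, so you must control $\delta \cdot 2^{|J|} \leq \delta \cdot 2^{\exp(C_\zeta/\delta)}$. But as $\delta \to 0^+$ this expression tends to $+\infty$ (its logarithm is $\log \delta + (\log 2)\exp(C_\zeta/\delta) \to +\infty$), and for large $\delta$ it is bounded below by a positive constant; hence its infimum over $\delta > 0$ is a positive constant depending only on $\zeta$. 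For $\epsilon$ smaller than this constant, no choice of $\delta$ achieves $\delta \cdot 2^{|J|} < \epsilon/8$. So the ``mildly delicate'' point is in fact fatal: the circularity between $\delta$ and $|J(\delta)|$ cannot be broken this way.

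The paper's argument sidesteps this entirely by exploiting monotonicity \emph{earlier}, before conditioning on a single slice. Since $\f^{\uparrow}$ is increasing, $(\f^{\uparrow})_J^B \supseteq (\f^{\uparrow})_J^{\varnothing}$ for \emph{every} $B \subset J$. Hence, assuming $\mu_q((\f^{\uparrow})_J^{\varnothing}) \geq \epsilon/4$, one gets
\[
\mu_q\bigl(\f^{\uparrow}\setminus\langle\h\rangle\bigr)
= \sum_{B \notin \h} q^{|B|}(1-q)^{|J|-|B|}\,\mu_q\bigl((\f^{\uparrow})_J^{B}\bigr)
\;\ge\; \frac{\epsilon}{4}\bigl(1-\mu_q(\langle\h\rangle)\bigr)
\;\ge\; \frac{\epsilon}{4}\Bigl(\frac{\epsilon}{2}-\frac{\epsilon^2}{16}\Bigr)
\;>\; \frac{\epsilon^2}{16},
\]
contradicting the junta approximation with the original parameter $\epsilon^2/16$. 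The point is that summing over all $B \notin \h$ replaces your factor $(1-q)^{-|J|}$ by $(1-\mu_q(\langle\h\rangle))^{-1}$, which is bounded in terms of $\epsilon$ alone via $\mu_q(\langle\h\rangle) \leq \mu_q(\f^{\uparrow}) + \mu_q(\langle\h\rangle \setminus \f^{\uparrow}) \leq (1-\epsilon/2) + \epsilon^2/16$. Your case split on whether $\varnothing \in \h$ is then unnecessary.
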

\begin{proof}
Suppose for a contradiction that $\mu_{q}\left((\f^{\uparrow})_{J}^{\varnothing}\right)\ge\frac{\epsilon}{4}$.
Then
\begin{align*}
\mu_{q}\left(\f^{\uparrow}\backslash \langle \h\rangle \right) & =\sum_{B\notin\h}q^{\left|B\right|}\left(1-q\right)^{\left|J\right|-\left|B\right|}\mu_{q}\left((\f^{\uparrow})_{J}^{B}\right)\\
 & \ge\sum_{B\notin\h}q^{\left|B\right|}\left(1-q\right)^{\left|J\right|-\left|B\right|}\frac{\epsilon}{4}\\
 & \ge\frac{\epsilon}{4}\left(1-\mu_{q}\left(\langle \h \rangle \right)\right)\\
 & \ge\frac{\epsilon}{4}\left(1-\mu_{q}\left(\f^{\uparrow}\right)-\mu_{q}\left(\langle \h \rangle \backslash (\f^{\uparrow})\right)\right)\\
 & \ge\frac{\epsilon}{4}\left(\frac{\epsilon}{2}-\frac{\epsilon^{2}}{16}\right)\\
 & >\frac{\epsilon^{2}}{16},
\end{align*}
a contradiction.
\end{proof}
Since $\left(\f_{J}^{\varnothing}\right)^{\uparrow}=\left(\f^{\uparrow}\right)_{J}^{\varnothing}$,
we have
\[
\mu\left(\f_{J}^{\varnothing}\right)\le\mu_{q}\left((\f^{\uparrow})_{J}^{\varnothing}\right)+\exp\left(-\frac{1}{2}\left(\frac{\zeta}{2}-\frac{j}{n}\right)^2n\right) < \frac{\epsilon}{2},
\]
provided $n$ is sufficiently large depending on $\zeta$, $\epsilon$ and $j$, using Proposition \ref{prop:monotone-approx} again. This completes the proof.
\end{proof}

\section{Proof of our stability result for the Ahlswede-Khachatrian theorem}
\label{sec:ak-stability}
In this section, we prove Theorem \ref{thm:main-ak-stability}, our stability result for the Ahlswede-Khachatrian theorem.
\subsection{A Bootstrapping Lemma}
\label{subsection:bootstrapping}

In this subsection, we present a bootstrapping argument showing that if a $t$-intersecting $\F$ is already `somewhat' close to $\e$,
then it must be `very' close to $\e$. We use this argument to show that there exists a `barrier' in the distance of $\F$
from $\e$ that cannot be crossed by performing only small modifications.
\begin{lemma}[Bootstrapping Lemma]\label{Lemma:Main}
Let $t \in \mathbb{N}$ and let $\zeta >0$. Then there exists $C=C(t,\zeta)>0$ such that the following holds. Let $\zeta \leq p \leq 1/2-\zeta$, let $\epsilon >0$, let $\F\subset \p([n])$ be a $t$-intersecting family, and let $\G \in \e$. If
\[
\mu_{p}\left(\mathcal{\F}\cap \G \right)\ge \mu_{p}\left(\G\right)\left(1-\epsilon\right),
\]
then
\[
\mu_{p}\left(\F\backslash \G \right)\le C\epsilon^{\log_{1-p}p}.
\]
\end{lemma}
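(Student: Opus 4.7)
The plan is to reduce the bootstrapping estimate, via a slice decomposition on the extremal-core coordinates of $\G$, to a bounded number of applications of the cross-intersecting inequality of Proposition~\ref{lem:Cross Intersecting}. By the isomorphism description of $\mathbb{AK}_{t,p}^{n}$, I may assume $\G = \F_{n,t,r}$ for some $r$ bounded in terms of $t$ and $\zeta$, and write $M = [t+2r]$, so that $\G = \{S \subset [n] : |S \cap M| \geq t+r\}$. The first step is to decompose
\[
\mu_p(\G) - \mu_p(\F \cap \G) = \sum_{T \subset M,\,|T| \geq t+r} p^{|T|}(1-p)^{t+2r-|T|}\bigl(1-\mu_p(\F_M^T)\bigr) \leq \epsilon\,\mu_p(\G),
\]
and, using that each weight $p^{|T|}(1-p)^{t+2r-|T|}$ is $\Theta_{t,\zeta}(1)$ on $p \in [\zeta, \tfrac12-\zeta]$ and that $r = O_{t,\zeta}(1)$, extract the pointwise estimate $\mu_p(\F_M^T) \geq 1 - O_{t,\zeta}(\epsilon)$ for \emph{every} $T \subset M$ with $|T| \geq t+r$.

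The remaining task is to bound $\mu_p(\F_M^{T'})$ for each $T' \subset M$ with $|T'| \leq t+r-1$ --- the `sub-extremal' types that jointly comprise $\F \setminus \G$. The crucial combinatorial step is to match any such $T'$ with some $T \subset M$ satisfying $|T| \geq t+r$ and $|T \cap T'| \leq t-1$. The natural choice is $T = (M \setminus T') \cup T_1$ for any $T_1 \subset T'$ of size $\max\{0, |T'|-r\}$; a short check confirms $|T| \geq t+r$, while the required inequality $\max\{0,|T'|-r\} \leq t-1$ is \emph{equivalent} to the standing assumption $|T'| \leq t+r-1$. With such a $T$ in hand, the pair of slices $\F_M^{T'}$ and $\F_M^T$, viewed as subfamilies of $\p([n]\setminus M)$, is cross-intersecting: for $A \in \F_M^{T'}$ and $B \in \F_M^T$, the sets $A \cup T'$ and $B \cup T$ belong to the $t$-intersecting family $\F$, so $|A \cap B| + |T \cap T'| \geq t$ forces $|A \cap B| \geq 1$.

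The finish is then immediate: Proposition~\ref{lem:Cross Intersecting} applied to each matched pair, combined with the bound $\mu_p(\F_M^T) \geq 1 - O_{t,\zeta}(\epsilon)$ from the first paragraph, gives
\[
\mu_p(\F_M^{T'}) \leq \bigl(1-\mu_p(\F_M^T)\bigr)^{\log_{1-p}p} = O_{t,\zeta}\bigl(\epsilon^{\log_{1-p}p}\bigr),
\]
and summing over the $O_{t,\zeta}(1)$ possible sub-extremal $T'$ yields $\mu_p(\F \setminus \G) \leq C(t,\zeta)\,\epsilon^{\log_{1-p}p}$, as required. The only step that carries genuine content is the matching construction: the condition $|T'| \leq t+r-1$ is tight, in that no admissible $T$ exists when $|T'|$ is larger, but such $T'$ lie in $\G$ and therefore do not contribute to the quantity one is bounding. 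It is this precise interlocking between the defining inequality of $\G$ and the hypothesis of Proposition~\ref{lem:Cross Intersecting} that allows the cross-intersecting inequality to absorb the full contribution of $\F \setminus \G$ and deliver the sharp exponent $\log_{1-p}p$.
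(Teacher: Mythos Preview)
Your proposal is correct and follows essentially the same route as the paper's proof: reduce to $\G=\F_{n,t,r}$ with $r=O_{t,\zeta}(1)$, use the slice decomposition over $M=[t+2r]$ to get $\mu_p(\F_M^T)\ge 1-O_{t,\zeta}(\epsilon)$ for all $T$ with $|T|\ge t+r$, match each sub-extremal $T'$ to such a $T$ with $|T\cap T'|\le t-1$, and apply Proposition~\ref{lem:Cross Intersecting}. The only cosmetic difference is that the paper simply asserts the existence of a suitable $D\in\F_{2r+t,t,r}$ with $|D\cap E|\le t-1$, whereas you give the explicit construction $T=(M\setminus T')\cup T_1$; your verification of this construction is correct.
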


\begin{proof}
Without loss of generality, we may assume that $\G = \F_{n,t,r}$ for some $r \in \mathbb{N}$. Note that, since $p \leq 1/2-\zeta$, we have $r\le r_0\left(t,\zeta\right)$ (this argument was already
used in the proof of Proposition~\ref{lem:juntas cannot get too close}). Hence, the assumption
$\mu_{p}\left(\mathcal{\F}\cap \F_{n,t,r} \right)\ge \mu_{p}\left(\mathcal{F}_{n,t,r}\right)\left(1-\epsilon\right)$
implies that for any $D\in \F_{2r+t,t,r}$, we have
\begin{equation}\label{Eq:BS1}
\mu_{p}\left(\mathcal{\F}_{\left[2r+t\right]}^{D}\right)\ge 1-O_{t,\zeta}\left(\epsilon\right).
\end{equation}
It is clear that for each $E\notin \F_{2r+t,t,r}$, there exists $D\in \F_{2r+t,t,r}$ such that $\left|D\cap E\right|\le t-1$.
Since $\F$ is $t$-intersecting, for any such $D,E$, the families $\F_{\left[2r+t\right]}^{D}$ and $\F_{\left[2r+t\right]}^{E}$ are
cross-intersecting. By Proposition~\ref{lem:Cross Intersecting} and~\eqref{Eq:BS1}, this implies
\[
\mu_{p}\left(\mathcal{F}_{\left[2t+r\right]}^{E}\right) \le \left(1-\mu_{p}\left(\mathcal{\F}_{\left[2r+t\right]}^{D}\right)\right)^{\log_{1-p}p}
 \le \left(O_{t,\zeta}\left(\epsilon\right)\right)^{\log_{1-p}p}=O_{t,\zeta}\left(\epsilon^{\log_{1-p}p}\right).
\]
Therefore,
\begin{eqnarray*}
\mu_{p}\left(\F\backslash \F_{n,t,r} \right) & = & \sum_{E\notin\F_{2r+t,t,r}}p^{\left|E\right|}\left(1-p\right)^{2r+t-\left|E\right|}\mu_{p}\left(\F_{\left[2r+t\right]}^{E}\right)\\
 & \le & O_{t,\zeta}\left(\epsilon^{\log_{1-p}p}\right)\sum_{E\in\p\left[2r+t\right]}p^{\left|E\right|}\left(1-p\right)^{2r+t-\left|E\right|} = O_{t,\zeta}\left(\epsilon^{\log_{1-p}p}\right).
\end{eqnarray*}
This completes the proof.
\end{proof}

\noindent The following corollary shows that in order to prove Theorem~\ref{thm:biased-ak-stability},
it is sufficient to show that as $\mu_p(\F) \rightarrow f(n,p,t)$, the distance $\mu_p(\F \setminus \e)$
is smaller than a sufficiently small constant $c$.
\begin{corollary}
\label{cor:Bootstrapping}Let $t \in \mathbb{N}$ and $\zeta >0$. There exist positive constants $\tilde{C},c,\epsilon_0$ depending only on $\zeta$ and $t$,
such that for any $t$-intersecting family $\F \subset \p([n])$ and any $p \in [\zeta,1/2-\zeta]$, if
\[
\mu_{p}\left(\F\backslash\e\right)\le c \qquad \mbox{ and } \qquad \mu_{p}\left(\F\right)\ge f(n,p,t)\left(1-\epsilon\right)
\]
for some $\epsilon \leq \epsilon_{0}$, then
\[
\mu_{p}\left(\F\backslash\e\right)\le \tilde{C}\epsilon^{\log_{1-p}p}.
\]
\end{corollary}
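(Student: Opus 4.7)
The plan is to reduce Corollary~\ref{cor:Bootstrapping} to Lemma~\ref{Lemma:Main} via a short self-improvement (fixed-point) argument, using the hypothesis $\mu_p(\F\setminus\e)\leq c$ to rule out an intermediate ``plateau'' regime. I would first pick $\G\in\e$ achieving the minimum distance, so that $a := \mu_p(\F\setminus\G)\leq c$, and set $b := \mu_p(\G\setminus\F)$ and $M := \mu_p(\G) = f(n,p,t)$. The identity $\mu_p(\F) - M = a - b$, combined with the hypothesis $\mu_p(\F)\geq M(1-\epsilon)$, forces $b\leq a + \epsilon M$, so $\mu_p(\F\cap\G)\geq M(1-\epsilon')$ with $\epsilon' := a/M + \epsilon$. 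Feeding this into Lemma~\ref{Lemma:Main} yields
\[
a \;\leq\; C\left(\frac{a}{M}+\epsilon\right)^{\!\alpha},\qquad \alpha := \log_{1-p}p,
\]
where $C = C(t,\zeta)$ is the constant from that lemma. Recall that $M = \Theta_{t,\zeta}(1)$ and, since $p\leq 1/2-\zeta$, $\alpha>1$ is bounded and bounded away from $1$ uniformly in $p\in[\zeta,1/2-\zeta]$.

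Next I would close the argument with a dichotomy on the size of $a/M$ relative to $\epsilon$. If $a/M\leq\epsilon$, then $a\leq C(2\epsilon)^{\alpha}\leq\tilde C\,\epsilon^{\log_{1-p}p}$ for $\tilde C := 2^{\alpha}C$, which is exactly the desired conclusion. Otherwise $a/M>\epsilon$, and the displayed inequality becomes $a\leq C(2a/M)^{\alpha}$, which rearranges to
\[
a \;\geq\; \left(\frac{M^{\alpha}}{2^{\alpha}C}\right)^{\!1/(\alpha-1)} \;=:\; c_0,
\]
a positive constant depending only on $t$ and $\zeta$. Choosing $c<c_0$ in the statement of the corollary, the hypothesis $a\leq c$ rules out this second alternative, so the first alternative always applies. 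Any positive $\epsilon_0$ will suffice; one may simply take $\epsilon_0=1$.

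The main delicate point, and the only real obstacle, is the uniformity of the constants $\tilde C, c, c_0$ with respect to $p$. This reduces to the facts that $\alpha=\log_{1-p}p$ traces out a compact subinterval of $(1,\infty)$ as $p$ varies in $[\zeta,1/2-\zeta]$, and that $M=\mu_p(\G)$ is $\Theta_{t,\zeta}(1)$ uniformly on this range. The role of the ``already somewhat close'' hypothesis $\mu_p(\F\setminus\e)\leq c$ is precisely to forbid the plateau branch of the self-improvement inequality --- the branch where $a$ could be pinned at an order-one value --- so that one may ``iterate all the way down'' to the sharp rate $\epsilon^{\log_{1-p}p}$ predicted by Lemma~\ref{Lemma:Main}.
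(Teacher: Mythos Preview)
Your proof is correct and follows essentially the same approach as the paper's: both feed the hypothesis into Lemma~\ref{Lemma:Main} to obtain an inequality of the form $a \leq C(a/M + \epsilon)^{\alpha}$ (the paper uses the cruder $a \leq C(c/M + \epsilon)^{\alpha}$), and both exploit $\alpha>1$ to rule out the ``plateau'' branch for $c$ sufficiently small. The only cosmetic difference is that the paper phrases the endgame as an iteration (if $\epsilon \leq c/M$ then the bound self-improves to $c/2$, then repeat), whereas you solve the fixed-point inequality in one shot by showing the bad branch forces $a \geq c_0$; your version is marginally cleaner but the content is identical.
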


\begin{proof}
By the assumption on $\F$, there exists $\G \in \e$ such that
\[\mu_p(\F \cap \G) \geq f(n,p,t)(1-\epsilon)-c.
\]
Hence, by Lemma~\ref{Lemma:Main}, we have $\mu_p\left(\F\backslash\e\right)\le C(\epsilon+c/f(n,p,t))^{\log_{1-p}p}$
for some $C=C(t,\zeta)$. Let $c$ be sufficiently small (as a function of $t,\zeta$) such that $C \cdot (2c/f(n,p,t))^{\log_{1-p}p}
\leq c/2$ for all $p \in [\zeta,1/2-\zeta]$. If $\epsilon>c/f(n,p,t)$, then
\[
\mu_p\left(\F\backslash\e\right) \leq C(\epsilon+c/f(n,p,t))^{\log_{1-p}p} \leq C \cdot (2\epsilon)^{\log_{1-p}p}
\leq \tilde{C} \epsilon^{\log_{1-p}p},
\]
and we are done. Otherwise, we have
\[
\mu_p\left(\F\backslash\e\right) \leq C(\epsilon+c/f(n,p,t))^{\log_{1-p}p} \leq C \cdot (2c/f(n,p,t))^{\log_{1-p}p} < c/2.
\]
In such a case, we can repeat the process with the same $\epsilon$ and $c/2$ instead of $c$. At some stage
$c$ will become sufficiently small so that $\epsilon>c/f(n,p,t)$, and then (as in the first case) we have
$\mu_p\left(\F\backslash\e\right) \leq \tilde{C}\epsilon^{\log_{1-p}p}$, as asserted.
\end{proof}

\noindent Finally, we can use the proof of Corollary~\ref{cor:Bootstrapping} to show the existence of a barrier
that cannot be crossed by small modifications.
\begin{corollary}
\label{cor:Threshold} Let $t \in \mathbb{N}$ and let $\zeta >0$. Let $\F \subset \p([n])$ be $t$-intersecting, and let $p \in [\zeta,1/2-\zeta]$. Let $c,\epsilon_0$ be as in Corollary~\ref{cor:Bootstrapping}, and let
$\epsilon_1:=\min(\epsilon_0,c/f(n,p,t))$. Suppose that
\[
\mu_{p}\left(\F\right)\ge f(n,p,t) \left(1-\epsilon_1\right) \qquad \mbox{ and } \qquad
\mu_{p}\left(\F\backslash\e\right)>c.
\]
Let $\G \subset \p([n])$ be a $t$-intersecting family with $\mu_{p}\left(\G\right)>\mu_{p}\left(\F\right)$ which is
a $(c/2)$-small modification of $\F$ (i.e., $\mu_{p}\left(\F\backslash\G\right)<\frac{c}{2}$). Then
\[
\mu_{p}\left(\G\backslash\e\right)>c.
\]
\end{corollary}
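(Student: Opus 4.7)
I will argue by contradiction: assume $\mu_p(\G \setminus \e) \leq c$. The key observation is that $\G$ then satisfies the hypotheses of Corollary~\ref{cor:Bootstrapping} with $\epsilon = \epsilon_1$: indeed $\epsilon_1 \leq \epsilon_0$ by the definition of $\epsilon_1$, and $\mu_p(\G) > \mu_p(\F) \geq f(n,p,t)(1 - \epsilon_1)$. The conclusion of the corollary therefore upgrades our assumed weak bound to
\[
\mu_p(\G \setminus \e) \leq \tilde C \epsilon_1^{\log_{1-p} p}.
\]

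Using $\epsilon_1 \leq c/f(n,p,t)$, the right-hand side is at most $\tilde C\bigl(c/f(n,p,t)\bigr)^{\log_{1-p}p}$. Here I exploit the fact, visible from the proof of Corollary~\ref{cor:Bootstrapping}, that $c$ was chosen precisely so that $C \cdot \bigl(2c/f(n,p,t)\bigr)^{\log_{1-p}p} \leq c/2$ for all $p \in [\zeta, 1/2-\zeta]$; since $\tilde C = C \cdot 2^{\log_{1-p}p}$, this is the same inequality as $\tilde C\bigl(c/f(n,p,t)\bigr)^{\log_{1-p}p} \leq c/2$. Consequently $\mu_p(\G \setminus \e) \leq c/2$.

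To close the argument, I will use the elementary ``triangle inequality'' $\mu_p(\F \setminus \G') \leq \mu_p(\F \setminus \G) + \mu_p(\G \setminus \G')$ for any $\G' \in \e$, which follows at once from the set-theoretic inclusion $\F \setminus \G' \subseteq (\F \setminus \G) \cup (\G \setminus \G')$. Taking $\G'$ to realise the minimum $\mu_p(\G \setminus \e)$ and combining with the hypothesis $\mu_p(\F \setminus \G) < c/2$, I obtain
\[
\mu_p(\F \setminus \e) \leq \mu_p(\F \setminus \G) + \mu_p(\G \setminus \e) < c/2 + c/2 = c,
\]
contradicting the assumption $\mu_p(\F \setminus \e) > c$. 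The whole argument is a transparent combination of Corollary~\ref{cor:Bootstrapping} with a triangle inequality; the only subtle point is verifying that the very same $c$ chosen inside Corollary~\ref{cor:Bootstrapping} already delivers the needed $c/2$ improvement in the bootstrap step, so no further adjustment of constants is required.
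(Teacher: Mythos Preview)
Your proof is correct and follows essentially the same route as the paper: assume $\mu_p(\G\setminus\e)\le c$, invoke the bootstrapping of Corollary~\ref{cor:Bootstrapping} (and the specific choice of $c$ made in its proof) to upgrade this to $\mu_p(\G\setminus\e)\le c/2$, and then use the triangle inequality $\mu_p(\F\setminus\e)\le\mu_p(\F\setminus\G)+\mu_p(\G\setminus\e)<c$ to reach a contradiction. The paper's version is more terse, simply citing ``the proof of Corollary~\ref{cor:Bootstrapping}'' for the $c/2$ step, whereas you spell out the constant-tracking explicitly; this is the same argument with more detail.
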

\begin{proof}
Suppose for a contradiction that $\mu_{p}\left(\G\backslash\e\right)\le c$. By the proof of
Corollary~\ref{cor:Bootstrapping}, we have $\mu_{p}\left(\G\backslash\e\right)<\frac{c}{2}$.
This yields
\[
\mu_{p}\left(\F\backslash\e\right)\le\mu_{p}\left(\F\backslash\G\right)+\mu_{p}\left(\G\backslash\e\right)<c,
\]
a contradiction.
\end{proof}

\subsection{Proof of Theorem~\ref{thm:biased-ak-stability}}
\label{sec:proof}

Let us recall the statement of Theorem~\ref{thm:biased-ak-stability}.
\begin{theorem*}
For any $t\in\mathbb{N}$ and any $\zeta>0$, there exists $C=C(t,\zeta)>0$ such that the following holds. Let $p\in\left[\zeta,\frac{1}{2}-\zeta\right]$, and let $\epsilon >0$. If $\F \subset \p([n])$ is a $t$-intersecting family such that $\mu_{p}\left(\F\right)\ge f(n,p,t)\left(1-\epsilon\right)$, then there exists a family $\G$ isomorphic to some $\F_{n,t,r}$, such that $\mu_{p}\left(\F\backslash \G \right)\le
C\epsilon^{\log_{1-p}p}$.
\end{theorem*}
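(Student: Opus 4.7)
The plan is to combine the bootstrapping machinery of Subsection \ref{subsection:bootstrapping} with a shifting-based reduction to a junta and the junta rigidity of Proposition \ref{lem:juntas cannot get too close}. By Corollary \ref{cor:Bootstrapping}, it suffices to prove the following weak stability statement: there exist constants $c = c(t,\zeta) > 0$ and $\epsilon_1 = \epsilon_1(t,\zeta) > 0$ such that every $t$-intersecting $\F \subset \p([n])$ with $\mu_p(\F) \geq f(n,p,t)(1-\epsilon)$ and $\epsilon \leq \epsilon_1$ satisfies $\mu_p(\F \setminus \e) \leq c$. Passing from $\F$ to $\F^{\uparrow}$ if necessary, we may assume $\F$ is increasing.

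Suppose for contradiction that $\mu_p(\F \setminus \e) > c$. I will construct a finite sequence $\F = \F_0, \F_1, \ldots, \F_N$ of increasing $t$-intersecting families such that $\mu_p(\F_{s+1}) \geq \mu_p(\F_s)$ for every $s$, each transition is a $(c/2)$-small modification (i.e.\ $\mu_p(\F_s \setminus \F_{s+1}) < c/2$), and the final family $\F_N$ is a $j$-junta with $j \leq m(t,\zeta)$ for a constant $m$ to be chosen. The construction is an outer loop on coordinates. At a generic stage, if $\F_s$ already depends on at most $m(t,\zeta)$ coordinates we stop; otherwise Proposition \ref{Prop:non-juntas have small minimal influence} produces a coordinate $i$ with $I_i(\F_s) < c/2$. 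I then run the shifting-plus-increase subroutine on coordinate $i$: apply Proposition \ref{cor:shifter} (in its analog for coordinate $i$ obtained by relabeling) to $i$-compress $\F_s$, then whenever $I_i$ remains positive apply Corollary \ref{cor:Filmus} to strictly increase $\mu_p$, then re-compress, and so on. Crucially, $I_i$ is non-increasing throughout the subroutine (by Claim \ref{prop:shifting}(c) for each shifting move, and by direct inspection of Propositions \ref{Prop:Increasing1}--\ref{Prop:Increasing2} for the Filmus step, since these remove $n$-influential top-sets and pair up previously unpaired bottom-sets), so each individual modification is bounded by the current value of $I_i$, hence by $c/2$. The subroutine terminates because $\mu_p$ is monotone nondecreasing and takes only finitely many values, and it terminates with $I_i = 0$, making coordinate $i$ inactive. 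The outer loop then proceeds on the reduced active support.

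Applying Corollary \ref{cor:Threshold} inductively along the sequence (at each step $\mu_p(\F_s) \geq f(n,p,t)(1-\epsilon_1)$ and each transition is a $(c/2)$-small modification), we obtain $\mu_p(\F_N \setminus \e) > c > 0$, so $\F_N \notin \e$. On the other hand, $\F_N$ is a $j$-junta with $j \leq m(t,\zeta)$ and $\mu_p(\F_N) \geq f(n,p,t)(1-\epsilon_1)$. Choosing $\epsilon_1$ below the threshold $\epsilon(t,\zeta,m(t,\zeta))$ afforded by Proposition \ref{lem:juntas cannot get too close} (applied to the generating family of $\F_N$ on $\p(J)$, where $J$ denotes the set of coordinates on which $\F_N$ depends), $\F_N$ must be isomorphic to some $\F_{n,t,r}$, i.e.\ $\F_N \in \e$. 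This contradicts the preceding sentence, and the proof is complete.

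The main obstacle is the quantitative bookkeeping in the subroutine: one must guarantee that every individual small modification is genuinely $(c/2)$-small. The key enabling ingredients are Proposition \ref{Prop:non-juntas have small minimal influence}, which produces a low-influence coordinate whenever the active support is too large, together with the monotonicity of $I_i$ during the subroutine, which keeps each subsequent modification bounded by the small initial value of $I_i$. A secondary subtlety is that Corollary \ref{cor:Threshold} is stated for strictly measure-increasing modifications, while the individual shifting moves preserve $\mu_p$; however, an inspection of the proof of Corollary \ref{cor:Threshold}, which relies only on Corollary \ref{cor:Bootstrapping} and the small-modification hypothesis, shows that the argument extends verbatim to measure-preserving modifications.
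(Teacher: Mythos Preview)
Your proof follows essentially the same strategy as the paper's: reduce via Corollary~\ref{cor:Bootstrapping} to showing $\mu_p(\F\setminus\e)\le c$, then transform $\F$ into a bounded-size $t$-intersecting junta by a chain of $(c/2)$-small modifications built from Proposition~\ref{cor:shifter} and Corollary~\ref{cor:Filmus}, identify the junta as an extremal family using Proposition~\ref{lem:juntas cannot get too close}, and unroll via Corollary~\ref{cor:Threshold}. The only structural difference is that the paper \emph{re-selects} a low-influence coordinate at every iteration (checking again whether the minimum nonzero influence is below $c/2$), whereas you fix a single coordinate $i$ and run the compress--Filmus loop on it until $I_i=0$ before moving on.

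This difference is minor but it does introduce one claim you have not fully justified. Your argument relies on $I_i$ being non-increasing through the entire subroutine. For the shifts this is Claim~\ref{prop:shifting}(c), and for the raw output of Propositions~\ref{Prop:Increasing1}--\ref{Prop:Increasing2} your ``direct inspection'' is indeed correct (each such step only resolves existing $i$-influential pairs and creates none). However, the Filmus output need not be increasing, so an up-closure is required before the next round of compression (the paper makes this explicit), and you do not address whether $I_i$ can grow under that up-closure. The paper's re-selection scheme sidesteps the issue entirely: after each compress--Filmus--up-closure cycle it simply checks again for \emph{some} coordinate of influence below $c/2$, which exists by Proposition~\ref{Prop:non-juntas have small minimal influence} unless the family is already a bounded junta. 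Adopting that tweak removes the unverified monotonicity claim with no other change to your argument.
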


\begin{proof}
Let $c,\epsilon_1$ be as in Corollary~\ref{cor:Threshold}, and let
$\epsilon_2:=\epsilon\left(t,\zeta,\max \left(t+2\ell(t,\zeta),t/(2\zeta),\frac{4}{c^2 p(1-p)}\right)\right)$ in the notations of
Proposition~\ref{lem:juntas cannot get too close}. Define $\epsilon_3 := \min(\epsilon_1,\epsilon_2)$. Let $r: = \ell(t,\zeta)$.

Let $\F \subset \p([n])$ be a $t$-intersecting family. By replacing $\F$ with $\F^{\uparrow}$, we may assume that $\F$ is increasing. We may also assume that
$\mu_{p}\left(\F\right)\ge f(n,p,t)\left(1-\epsilon_3\right)$. (There is no loss of generality in this assumption, as
$C$ can be chosen such that the theorem holds trivially for all $\epsilon>\epsilon_3$.) We would like to show that
$\mu_{p}\left(\F\backslash\e\right)\le c$. This will complete the proof of the theorem by Corollary~\ref{cor:Bootstrapping}.

We let $\F_0=\F$ and construct a sequence $(\F_i)$ of increasing $t$-intersecting families such that each $\F_i$ is obtained from
$\F_{i-1}$ by a series of $c/2$-small modifications. Each `step' in the sequence is composed of compression (using
the process of section~\ref{sec:subsub:compression}) and measure increase (using the process of
section~\ref{sec:subsub:increase}). The construction of $\F_i$ from $\F_{i-1}$ is defined as follows:
\begin{enumerate}
\item If either $\F_{i-1}$ depends on at most $\max\{t+2r,t/(2\zeta)\}$ coordinates, or else
$$\min_{j:\ I_j(\F_{i-1}) > 0} I_j(\F_{i-1}) \geq c/2,$$
then stop.

\item Consider the set of coordinates with non-zero influence on $\F_{i-1}$. Assume w.l.o.g. that this set
is $[m]$, and that $\min_{j \in [m]} I_j(\F_{i-1}) = I_m(\F_{i-1})$. Transform $\F_{i-1}$ to an $m$-compressed increasing $t$-intersecting family $\G_{i-1}$ with $\mu_p(\G_{i-1})=\mu_p(\F_{i-1})$ by a sequence of small modifications
(as described in Proposition~\ref{cor:shifter}).

\item Transform $\G_{i-1}$ into an increasing $t$-intersecting family $\F_i$ with $\mu_p(\F_i)> \mu_p(\G_{i-1})$ by
a small modification (as described in Corollary~\ref{cor:Filmus}, which can be applied since $m > t/(2\zeta)$) and then taking the up-closure to turn the family into an increasing family.
\end{enumerate}
We claim that during all the process, all modifications are $c/2$-small, and that the process terminates after a finite
number of steps.

Indeed, Proposition~\ref{cor:shifter} assures that all modifications in the $m$-compression process are
$I_m(\F_{i-1})$-small, and we have $I_m(\F_{i-1})<c/2$, as otherwise the process terminates by~(1.). Similarly,
Corollary~\ref{cor:Filmus} (which can be applied to $\G_{i-1}$, since $\G_{i-1}$ is $m$-compressed) assures that the
transformation to $\F_i$ is an $I_m(\G_{i-1})$-small modification, and by Proposition~\ref{cor:shifter},
$I_m(\G_{i-1}) \leq I_m(\F_{i-1}) < c/2$. By the construction, the sequence of measures $(\mu_p(\F_i))$ is strictly monotone increasing in $i$. As the measure of a family $\F_i \subset \p([n])$ can assume only a finite number of values, the sequence eventually terminates.

Let $\F_{\ell}$ be the last element of the sequence. As the sequence terminated at the $\ell$'th step, either
$\F_{\ell}$ depends on at most $\max\{t+2r,t/(2\zeta)\}$ coordinates or else $\min_{j:\ I_j(\F_{\ell}) \neq 0} I_j(\F_{\ell}) \geq c/2$. In the latter case,
by Proposition~\ref{Prop:non-juntas have small minimal influence} $\F_{\ell}$ depends on at most
$\frac{4}{c^2 p(1-p)}$ coordinates. Thus, in either case $\F_{\ell}$ depends on at most
$\max \left\{t+2r,t/(2\zeta),\frac{4}{c^2 p(1-p)}\right\}$ coordinates. Since
$\mu_p(\F_{\ell}) > \mu_p(\F) \geq f(n,p,t)\left(1-\epsilon_3\right)$,  Proposition~\ref{lem:juntas cannot get too close}
implies that $\F_{\ell} \in \e$. In particular, $\mu_p(\F_{\ell} \setminus \e) =0 < c$.

Now, we unroll the steps of the sequence. As $\F_{\ell}$ was obtained from $\F_{\ell-1}$ by a $c/2$-small modification,
Corollary~\ref{cor:Threshold} implies $\mu_p(\F_{\ell-1} \setminus \e) < c$. The same holds for any step of the
sequence, and thus, by (reverse) induction, we get $\mu_p(\F \setminus \e) = \mu_p(\F_{0} \setminus \e) < c$. As
mentioned above, this completes the proof of the theorem by Corollary~\ref{cor:Bootstrapping}.
\end{proof}

\subsection{Tightness of Theorem~\ref{thm:biased-ak-stability}}
\label{sec:sub:tightness}

As mentioned in the introduction, Theorem~\ref{thm:biased-ak-stability} is tight (up to a factor depending upon $t$ and $\zeta$ alone) for the families
\begin{align*}
\tilde{\mathcal{H}}_{n,t,r,s} & :=\left\{ A\subset \mathcal{P}\left(\left[n\right]\right)\,:\,|A \cap [t+2r]| \geq t+r,\, A \cap \left\{t+2r+1,\ldots,t+2r+s\right\} \ne\emptyset\right\} \\
 & \cup\left\{ A\subset \mathcal{P}\left(\left[n\right]\right)\,:\,|A \cap [t+2r]|=t+r-1,\,\left\{t+2r+1,\ldots,t+2r+s\right\}\subset A\right\},
\end{align*}
for all sufficiently large $n$ and $s$. Here is the computation showing this. Let $\zeta \leq p \leq 1/2-\zeta$. Choose $r \in \mathbb{N} \cup \{0\}$ such that $p\in\left[\frac{r}{t+2r-1},\frac{r+1}{t+2r+1}\right]$; then we have
\begin{equation}\label{eq:biased-1}
f(n,p,t) = \mu_p(\F_{n,t,r}) = \sum_{i=t+r}^{t+2r}\binom{t+2r}{i}p^i(1-p)^{t+2r-i}
\end{equation}
for all $n \geq t+2r$. For all $n \geq t+2r+s$, we have
\begin{equation}\label{eq:biased-2}
\mu_{p}\left(\tilde{\mathcal{H}}_{n,t,r,s}\right)= f(n,p,t)\left(1-\left(1-p\right)^{s}\right)+ {{t+2r}\choose{t+r-1}} p^{t+r-1}\left(1-p\right)^{r+1}p^{s},
\end{equation}
and
\begin{equation}
\label{eq:biased-3}
\mu_{p}\left(\tilde{\mathcal{H}}_{n,t,r,s}\backslash\mathcal{F}_{n,t,r}\right)={{t+2r}\choose{t+r-1}} p^{t+r-1}\left(1-p\right)^{r+1}p^{s}.
\end{equation}
Note that all the expressions (\ref{eq:biased-1}), (\ref{eq:biased-2}) and (\ref{eq:biased-3}) are independent of $n$. Moreover, since $\zeta < p <1/2-\zeta$, we have $r = O_{t,\zeta}(1)$ and therefore ${{t+2r}\choose{t+r-1}} p^{t+r-1}\left(1-p\right)^{r+1} = \Theta_{t,\zeta}(1)$. Hence, for all $s \geq s_0(t,\zeta)$ and all $n \geq t+2r+s$, we have
\[
\mu_{p}\left(\tilde{\mathcal{H}}_{n,t,r,s}\right)= f(n,p,t)\left(1-\left(1-p\right)^{s}\right) + \Theta_{t,\zeta}(1)p^s \geq
f(n,p,t)\left(1-\tfrac{1}{2}\left(1-p\right)^{s}\right),
\]
while $\mu_{p}(\tilde{\mathcal{H}}_{n,t,r,s}\backslash\mathcal{F}_{n,t,r}) = \Theta_{t,\zeta}(1)p^s$. Writing $\epsilon:=\tfrac{1}{2}(1-p)^s$, we
have
$$\mu_{p}(\tilde{\mathcal{H}}_{n,t,r,s}) \geq f(n,p,t) \left(1-\epsilon\right),\quad \mu_{p}(\tilde{\mathcal{H}}_{n,t,r,s}\backslash\mathcal{F}_{n,t,r}) = \Theta_{t,\zeta}( \epsilon^{\log_{1-p}p}),$$
which is tight for Theorem \ref{thm:biased-ak-stability}.

We remark that it is very easy to see that the families $\mathcal{H}_{n,k,t,r,d}$ (defined in the Introduction) are tight for Theorem \ref{thm:main-ak-stability}, for $n$ and $d$ sufficiently large.

\subsection{Proof of Theorem~\ref{thm:main-ak-stability}}
\label{subsection:uniform}

In this section we present the proof of Theorem~\ref{thm:main-ak-stability}. First, we deduce a `weak stability' result
from Theorem~\ref{thm:biased-ak-stability}, using the reduction technique presented in section~\ref{sec:sub:reduction}. Then,
we use a `bootstrapping' technique similar to in the proof of Lemma~\ref{Lemma:Main}, to leverage the weak stability result into the assertion of the theorem.
\begin{proposition}[Weak stability theorem]\label{Thm:Large-n}
Let $t\in\mathbb{N}$ and $\zeta,\epsilon>0$. There exist $C=C\left(t,\zeta\right)>0$ and $n_{0}\left(t,\zeta,\epsilon\right) \in \mathbb{N}$ such that
the following holds for all $n>n_0$, all $k\in [\zeta n,(1/2-\zeta)n] \cap \mathbb{N}$ and $p=\frac{k+\sqrt{4n\log n}}{n}$.
Let $\F \subset \binom{[n]}{k}$ be a $t$-intersecting family that satisfies
$|\F| \ge f(n,k,t)-\epsilon \binom{n}{k}$. Then there exists $\G$ isomorphic to some $\F_{n,k,t,r}$ such that
$|\F\backslash \G| \le C\epsilon^{\log_{1-p}p} {{n}\choose{k}}$, where $r \leq C$.
\end{proposition}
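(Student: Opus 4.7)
The plan is to pass to the biased-measure setting via the up-closure, invoke Theorem~\ref{thm:biased-ak-stability}, and then pull the structural conclusion back to the $k$-uniform layer. Set $p := (k+\sqrt{4n\log n})/n$; for $n$ large depending on $\zeta$, we have $p \in [\zeta,\tfrac12 - \tfrac{\zeta}{2}]$, so Theorem~\ref{thm:biased-ak-stability} will apply with the parameter $\zeta/2$ in place of $\zeta$. Replace $\F$ by its up-closure $\F^{\uparrow}$; this is increasing and still $t$-intersecting. By Proposition~\ref{prop:monotone-approx} applied with $\delta = 1/n^2$ (which is allowed by the choice of $p$), we get
\[
\mu_p(\F^{\uparrow}) \;\geq\; (1 - 1/n^2)\,|\F|/\tbinom{n}{k} \;\geq\; (1 - 1/n^2)\bigl(f(n,k,t)/\tbinom{n}{k} - \epsilon\bigr).
\]

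Next I would compare $f(n,p,t)$ and $f(n,k,t)/\binom{n}{k}$. Since $p \leq 1/2 - \zeta/2$, the maximum defining $f(n,p,t) = \max_r \mu_p(\F_{n,t,r})$ is attained at some $r \leq r_0(t,\zeta)$, and similarly for $f(n,k,t)$. Each $\F_{n,t,r}$ with $r \leq r_0$ is a $j$-junta with $j \leq t + 2r_0$, and $\F_{n,t,r}^{(k)} = \F_{n,k,t,r}$. Proposition~\ref{prop:juntas measure}, applied to each of the (boundedly many) $\F_{n,t,r}$ with $r \leq r_0$, gives $\bigl|\mu_p(\F_{n,t,r}) - |\F_{n,k,t,r}|/\binom{n}{k}\bigr| = o_{n \to \infty}(1)$, so $f(n,p,t) = f(n,k,t)/\binom{n}{k} + o(1)$. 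Combining, we obtain
\[
\mu_p(\F^{\uparrow}) \;\geq\; f(n,p,t)\,(1 - \epsilon'),
\]
where $\epsilon' = O_{t,\zeta}(\epsilon + o(1))$; by enlarging $C$ we may assume $\epsilon \geq n^{-1/2}$ (say), so $\epsilon' = O_{t,\zeta}(\epsilon)$.

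Now apply Theorem~\ref{thm:biased-ak-stability} to $\F^{\uparrow}$: it produces a family $\tilde{\G}$ isomorphic to some $\F_{n,t,r}$ with $r = O_{t,\zeta}(1)$, such that
\[
\mu_p\!\left(\F^{\uparrow} \setminus \tilde{\G}\right) \;\leq\; C_1\,(\epsilon')^{\log_{1-p} p} \;\leq\; C_2\,\epsilon^{\log_{1-p} p},
\]
for constants depending only on $t$ and $\zeta$. The family $\tilde{\G}$ is a $j$-junta with $j \leq t + 2r = O_{t,\zeta}(1)$. Applying Proposition~\ref{prop:difference of juntas measure} to $\F^{\uparrow}$ and $\tilde{\G}$ with $\delta'' = C_2 \epsilon^{\log_{1-p} p}$ yields
\[
\bigl|(\F^{\uparrow} \setminus \tilde{\G})^{(k)}\bigr| \;\leq\; C_3\,\epsilon^{\log_{1-p} p}\,\tbinom{n}{k}.
\]
Since $\F \subset \binom{[n]}{k}$, we have $(\F^{\uparrow})^{(k)} = \F$, and $\tilde{\G}^{(k)} \cong \F_{n,k,t,r}$; taking $\G := \tilde{\G}^{(k)}$ gives the desired conclusion.

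The main obstacle I expect is the bookkeeping: ensuring the three sources of error (the $(1-1/n^2)$ factor from Proposition~\ref{prop:monotone-approx}, the $o(1)$ gap between $f(n,p,t)$ and $f(n,k,t)/\binom{n}{k}$, and the multiplicative loss when passing from $\epsilon$ to $\epsilon'$) can all be absorbed into a single constant $C = C(t,\zeta)$ without damaging the exponent $\log_{1-p} p$. The trick is that for $\epsilon$ below a suitable polynomial threshold in $1/n$, the conclusion becomes vacuous once $C$ is taken large enough, so one only needs the argument for $\epsilon$ above that threshold, where $\epsilon$ dominates the $o(1)$ terms.
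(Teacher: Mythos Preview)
Your proposal is correct and follows essentially the same route as the paper: pass to $\F^{\uparrow}$, use Proposition~\ref{prop:monotone-approx} to lower-bound $\mu_p(\F^{\uparrow})$, use Proposition~\ref{prop:juntas measure} to compare $f(n,p,t)$ with $f(n,k,t)/\binom{n}{k}$, apply Theorem~\ref{thm:biased-ak-stability}, and then pull back via Proposition~\ref{prop:difference of juntas measure}. The only cosmetic difference is that the paper handles the $o(1)$ bookkeeping more directly: since $n_0$ is allowed to depend on $\epsilon$, they invoke Proposition~\ref{prop:juntas measure} with error parameter exactly $\epsilon$, obtaining $\mu_p(\F^{\uparrow}) \geq f(n,p,t) - 3\epsilon$ in one step, so no separate ``small $\epsilon$ is vacuous'' argument is needed.
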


\begin{proof}
Let $\F,k$ and $p$ satisfy the assumption of the proposition, and let
$\F^{\uparrow} \subset \p([n])$ be the increasing family generated by $\F$. By Proposition~\ref{prop:monotone-approx},
for a sufficiently large $n$,
\[
\mu_{p}\left(\F^{\uparrow}\right)\ge \left(\frac{f(n,k,t)}{\binom{n}{k}} - \epsilon \right) \left(1-\frac{1}{n} \right) \geq
\frac{f(n,k,t)}{\binom{n}{k}} - 2\epsilon.
\]
Let $\F_{n,p,t,r} \in\e$ be a family for which the maximal $\mu_p$ measure is attained, i.e., $\mu_{p}\left(\F_{n,p,t,r}\right)=f(n,p,t)$.
As $p \leq 1/2-\zeta/2$ (which holds assuming that $n$ is sufficiently large), $\F_{n,p,t,r}$ depends on at most $j$ coordinates, for some $j=j(t,\zeta) \in \mathbb{N}$. Hence, by Proposition~\ref{prop:juntas measure},
\[
f(n,p,t)=\mu_{p}\left(\F_{n,p,t,r}\right) < \frac{|\F_{n,p,t,r}^{(k)}|}{\binom{n}{k}}+\epsilon \leq \frac{f(n,k,t)}{\binom{n}{k}} + \epsilon.
\]
Thus, $\mu_{p}\left(\F^{\uparrow}\right) \ge f(n,p,t)-3\epsilon$. Since $\F^{\uparrow}$ is $t$-intersecting, we can apply to it
Theorem~\ref{thm:biased-ak-stability} to get
\[
\mu_p(\F^{\uparrow} \setminus \tilde{\G}) \leq C' \epsilon^{\log_p (1-p)},
\]
for some $\tilde{\G} \in \e$ and $C'=C'(t,\zeta)$. Finally, denoting $\G := \tilde{\G}^{(k)}$, we obtain by Proposition~\ref{prop:difference of juntas measure}
\[
|\F\backslash \G| = |(\F^{\uparrow} \setminus \tilde{\G})^{(k)}| < C \epsilon^{\log_{1-p}p} {{n}\choose{k}},
\]
for a sufficiently large $C=C(t,\zeta)$, as asserted.
\end{proof}

\mn Proposition~\ref{Thm:Large-n} shows that the assertion of Theorem~\ref{thm:main-ak-stability} holds for all $n \geq n_0(t,\zeta,\epsilon)$.
This is not sufficient for Theorem~\ref{thm:main-ak-stability}, in proving which we may only assume $n$ to be large in terms of $t,\zeta$ (and not in terms of $\epsilon$). However,
we can apply Proposition~\ref{Thm:Large-n} with any \emph{moderately small} $\epsilon_0(t,\zeta)>0$ to conclude that for any $n \geq n_1(t,\zeta)$, if $\F$ satisfies the assumption
of Theorem~\ref{thm:main-ak-stability} then there exists $\G \cong \F_{n,k,t,r}$ such that $|\F\backslash \G| \le \epsilon_0 {{n}\choose{k}}$.
In the proof of Theorem~\ref{thm:main-ak-stability} below, we use this weak stability version, with $\epsilon_0$ chosen in such a way that we will be able to use Proposition~\ref{lem:from David} to bootstrap the `weak stability' into `strong stability'.

\mn Let us recall the formulation of Theorem~\ref{thm:main-ak-stability}.
\begin{theorem*}
Let $n,t,d\in\mathbb{N}$, $\zeta \in (0,1/2)$, and $k\in\left(\zeta n,\left(\frac{1}{2}-\zeta\right)n\right)$.
There exists $C=C\left(t,\zeta\right) >0$ such the following holds. Let $\F \subset \binom{[n]}{k}$ be a $t$-intersecting
family with $|\F|> f(n,k,t)-\frac{1}{C}\binom{n-d}{k}$. Then there exists $\G$ isomorphic to some $\F_{n,k,t,r}$
such that $\left|\F\backslash\G\right|<C\binom{n-d}{k-d}$, where $r \leq C$.

\end{theorem*}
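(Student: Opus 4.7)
The proof proceeds in two stages: a weak-stability approximation via Proposition~\ref{Thm:Large-n}, followed by a bootstrapping argument that converts this into the sharp estimate using the cross-intersection bound of Proposition~\ref{lem:from David}.

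In the first stage, I fix a sufficiently small constant $\epsilon_0=\epsilon_0(t,\zeta)>0$ (to be specified) and take $C\geq 1/\epsilon_0$. Since $\binom{n-d}{k}\leq\binom{n}{k}$, the hypothesis of the theorem yields $|\F|\geq f(n,k,t)-\epsilon_0\binom{n}{k}$, so Proposition~\ref{Thm:Large-n} produces $\G\cong\F_{n,k,t,r_0}$ with $r_0\leq C_1(t,\zeta)$ and $|\F\setminus\G|\leq C_2(t,\zeta)\,\epsilon_0^{\log_{1-p}p}\binom{n}{k}$, where $p=k/n$. After relabelling, I will assume $\G=\F_{n,k,t,r_0}$ is supported on $S:=[t+2r_0]$, so $|S|$ is bounded in terms of $t$ and $\zeta$.

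For the second stage, I decompose $\F$ by its trace on $S$: for each $B\subseteq S$, let $\F_S^B\subseteq\binom{[n]\setminus S}{k-|B|}$. This gives $|\F\setminus\G|=\sum_{|B|<t+r_0}|\F_S^B|$ and $|\G\setminus\F|=\sum_{|B|\geq t+r_0}\bigl(\binom{n-|S|}{k-|B|}-|\F_S^B|\bigr)$. For each $B\subseteq S$ with $|B|<t+r_0$, a short count (using $|S\setminus B|\geq r_0+1$) produces a set $D(B)\subseteq S$ with $|D(B)|=t+r_0$ and $|B\cap D(B)|=t-1$; the $t$-intersecting property of $\F$ then forces $\F_S^B$ and $\F_S^{D(B)}$ to be cross-intersecting as subfamilies of $\binom{[n]\setminus S}{k-|B|}$ and $\binom{[n]\setminus S}{k-t-r_0}$ respectively. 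Assume for contradiction that $|\F\setminus\G|\geq C\binom{n-d}{k-d}$, and pick $B^*$ with $|\F_S^{B^*}|\geq(C/2^{|S|})\binom{n-d}{k-d}$. Apply Proposition~\ref{lem:from David} to $\F_S^{B^*},\F_S^{D(B^*)}$ with parameter $M=2^{|S|+1}$ and with $d^*$ the smallest integer $\geq c_0$ satisfying $\binom{n-|S|-d^*}{k-|B^*|-d^*}\leq|\F_S^{B^*}|$; a direct binomial-ratio estimate (valid once $C\geq 2^{|S|}$) forces $d^*\leq d$, whence
\[
|\G\setminus\F|\;\geq\;\binom{n-|S|-d^*}{k-t-r_0}\;\geq\;\beta(t,\zeta)\binom{n-d}{k}
\]
for a positive constant $\beta$ depending only on $t,\zeta$.

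Combining this lower bound with $|\G\setminus\F|\leq|\F\setminus\G|+(1/C)\binom{n-d}{k}$ (which follows from the hypothesis) sharpens the estimate on $|\F\setminus\G|$ to $(\beta-1/C)\binom{n-d}{k}$. Feeding this strengthened bound on $|\F_S^{B^*}|$ back into the same argument replaces $d^*$ by a new value smaller by roughly the factor $\alpha:=\log(1-p)/\log p\in(0,1)$; iterating the bootstrap $O(\log d)$ times drives $d^*$ down to $O_{t,\zeta}(1)$, so $|\G\setminus\F|\geq c(t,\zeta)\binom{n}{k}$, which finally contradicts the weak-stability bound $|\F\setminus\G|\leq C_2\,\epsilon_0^{\log_{1-p}p}\binom{n}{k}$ once $\epsilon_0$ is chosen small enough. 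The main obstacle is the careful control of the binomial-coefficient ratios throughout the iteration: both the single-step bound $d^*\leq d$ and the geometric decrease of $d^*$ across successive iterations depend essentially on $p=k/n$ being bounded away from both $0$ and $1/2$, and the bookkeeping must ensure that the cumulative $(1/C)\binom{n-d}{k}$ error terms remain under control as the iteration unfolds.
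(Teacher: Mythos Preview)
Your overall two-stage architecture matches the paper's: first invoke the weak stability result (Proposition~\ref{Thm:Large-n}) with a fixed small $\epsilon_0$ to get close to some Frankl family $\G$ supported on a bounded set $S$, then bootstrap via cross-intersecting estimates on the slices $\F_S^B$. The divergence is entirely in how the bootstrap is executed.

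The paper's bootstrap is a \emph{single} application of Proposition~\ref{lem:from David}, but with the roles reversed from yours. It takes $O\notin\G'$ maximising $|\F_{[j]}^O|$, picks $I\in\G'$ with $|I\cap O|\le t-1$, and applies the proposition with $\F_{[j]}^I$ as the large family and $\F_{[j]}^O$ as the small family (and $M=2^j$). The weighted bound
\[
|\F_{[j]}^I|+2^j|\F_{[j]}^O|\le \binom{n-j}{k-|I|}-\binom{n-j-d}{k-|I|}+2^j\binom{n-j-d}{k-|O|-d}
\]
is then combined with the trivial estimate $|\F|\le |\F_{[j]}^I|+2^j|\F_{[j]}^O|+\bigl(|\G|-\binom{n-j}{k-|I|}\bigr)$ to bound $|\F|$ \emph{directly}, yielding $|\F|\le f(n,k,t)-\tfrac{1}{C}\binom{n-d}{k}$ in one stroke. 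The weight $M=2^j$ is doing the essential work: it lets all the ``bad'' slices be absorbed at once.

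Your scheme instead puts $\F_S^{B^*}$ in the small-family slot and deduces only that $|\F_S^{D(B^*)}|\le\binom{n-|S|}{k-t-r_0}-\binom{n-|S|-d^*}{k-t-r_0}$; this conclusion does not use the parameter $M$ at all and is exactly what Lemma~\ref{lemma:cross-intersecting-slice} (plain Kruskal--Katona) gives. You then convert the lower bound on $|\G\setminus\F|$ into one on $|\F\setminus\G|$ via the hypothesis, gaining a factor $\alpha=\log_{1-p}p$ in the exponent, and iterate $O(\log d)$ times. This is plausible and can likely be made rigorous, but it is substantially more work than the paper's one-step argument, and two points in your write-up are genuinely incomplete: (i) you invoke Proposition~\ref{lem:from David} with $M=2^{|S|+1}$ but never use the weight, so the citation is misleading and should be replaced by the simpler Lemma~\ref{lemma:cross-intersecting-slice}; (ii) you explicitly flag the control of binomial ratios and cumulative error terms across the iteration as ``the main obstacle'' without carrying it out---the recursion $d_{i+1}\le\alpha d_i+O_{t,\zeta}(1)$ does converge, but verifying this and tracking the additive $(1/C)\binom{n-d}{k}$ losses at each step is real bookkeeping that your proposal skips. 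The paper's route avoids all of this by exploiting the weighted form of Proposition~\ref{lem:from David} correctly.
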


\begin{proof}[Proof of Theorem~\ref{thm:main-ak-stability}]
Recall that for fixed $t,\zeta$, all elements of $\e$ are juntas on at most $j=j(t,\zeta)$ elements.
Denote $c=c\left(2^{j},t,\zeta\right)$ in the notations of Proposition~\ref{lem:from David}.
Let $\F$ be a family that satisfies the assumption of the theorem
(with a sufficiently large $C=C(t,\zeta)>0$ to be specified below). Clearly, we may assume that $d \leq k+1$. By increasing $C$ if necessary, we may assume that $d \geq d_0(t,\zeta)$ for any $d_0(t,\zeta) \in \mathbb{N}$ and that $n \geq n_0(t,\zeta)$ for any $n_0(t,\zeta) \in \mathbb{N}$.

Provided $n_0 = n_0(t,\zeta)$ is sufficiently large, we have $\binom{n-c-j}{k-c-j} = \Theta_{t,\zeta} \left(\binom{n}{k} \right)$.
Hence, we can apply Proposition~\ref{Thm:Large-n} to conclude that there exists $\G\in (\e)^{(k)}$ such that
\begin{equation}\label{Eq:MainThm1}
\left|\F\backslash\G\right|<\binom{n-c-j}{k-c-j}.
\end{equation}
Suppose w.l.o.g. that $\G$ depends only on the coordinates in $\left[j\right]$, and denote by
$\G'$ the restriction of $\G$ to $\mathcal{P}\left(\left[j\right]\right)$ (i.e., $\G'=\{A \cap [j]:\ A \in \G\}$).
Let $O\notin\G'$ be such that $\left|\F_{\left[j\right]}^{O}\right|$
is maximal. We would like to show that
$\left|\F_{\left[j\right]}^{O}\right|\le \binom{n-d}{k-d}$. This will complete the proof, as
\[
|\F \setminus \G| = \sum_{S \in \p([j]) \setminus \G'} \left|\F_{\left[j\right]}^{S}\right| \leq 2^j \left|\F_{\left[j\right]}^{O}\right|.
\]
Suppose for a contradiction that
\begin{equation}\label{Eq:MainThm2}
\left|\F_{\left[j\right]}^{O}\right|> \binom{n-d}{k-d}.
\end{equation}
It is clear that there exists $I\in\G'$ such that $\left|I\cap O\right|\le t-1$. We have
\begin{equation}\label{Eq:MainThm3}
\left|\F\right| = \sum_{S \in \p([j])} |\F_{[j]}^S| \le \left|\F_{\left[j\right]}^{I}\right| + 2^{j}\left|\F_{\left[j\right]}^{O}\right| + \left(|\G|-\binom{n-j}{k-\left|I\right|} \right)
\end{equation}
(where the two last summands are upper bounds on $\sum_{S \in \p([j]) \setminus \G'} |\F_{[j]}^S|$ and
$\sum_{S \in \G' \setminus \{I\}} |\F_{[j]}^S|$, respectively). Now, we note that since $\F$ is $t$-intersecting, the families
$\F_{\left[j\right]}^{I}$, $\F_{\left[j\right]}^{O}$ are cross-intersecting. We have
\[
\left|\F_{\left[j\right]}^{O}\right|\le\binom{n-c-j}{k-c-j}\le\binom{n-j-c}{k-\left|O\right|-c}
\]
(where the first inequality follows from~\eqref{Eq:MainThm1} and the second holds provided $n_0=n_0(t,\zeta)$ is sufficiently large), and
on the other hand,
\[
\left|\F_{\left[j\right]}^{O}\right| \geq \binom{n-d}{k-d} \geq \binom{n-j-d}{k-\left|O\right|-d}
\]
(where the first inequality follows from~\eqref{Eq:MainThm2} and the second holds trivially). Thus, we can apply Proposition~\ref{lem:from David} to get
\[
\left|\F_{\left[j\right]}^{I}\right| + 2^{j}\left|\F_{\left[j\right]}^{O}\right| \leq
\binom{n-j}{k-\left|I\right|}-\binom{n-j-d}{k-\left|I\right|} + 2^{j} \binom{n-j-d}{k-\left|O\right|-d}.
\]
By~\eqref{Eq:MainThm3}, this implies
\begin{align*}
\left|\F\right| &\leq \binom{n-j}{k-\left|I\right|}-\binom{n-j-d}{k-\left|I\right|} + 2^j \binom{n-j-d}{k-\left|O\right|-d}+
\left(|\G|-\binom{n-j}{k-\left|I\right|} \right) \\
& = |\G|-\binom{n-j-d}{k-\left|I\right|} + 2^j \binom{n-j-d}{k-\left|O\right|-d}\\
&\leq f(n,k,t) - \frac{1}{C} \binom{n-d}{k},
\end{align*}
where the last inequality holds for all $d_0 \leq d \leq k+1$ and all $n \geq n_0$, provided $C=C(t,\zeta)$, $n_0 = n_0(t,\zeta)$ and $d_0 = d_0(t,\zeta)$ are all sufficiently large. This
contradicts our assumption on $\F$, completing the proof.
\end{proof}

\section{Proof of our weak regularity lemma}
\label{sec:reg}
In this section we prove Theorem \ref{thm:weak-reg}, our `weak regularity lemma' for hypergraphs of uniformity linear in the number of vertices. First, we need some preliminaries.

Jensen's inequality states that for any convex function $f\colon\mathbb{R}\to\mathbb{R}$, and for any real-valued, integrable random variable $X$, we have
\begin{equation}
\label{eq:jensen}
\mathbb{E}[f(X)]\ge f(\mathbb{E}[X]).
\end{equation}
It turns out that under certain conditions, if the inequality (\ref{eq:jensen}) is approximately an equality, then $X$ is `highly concentrated' around its mean. The following is a restatement of Lemma 7 of Fox \cite{fox}; it may be seen as a stability version of Jensen's inequality for the function $x \mapsto x \log x$.
\begin{lemma}
\label{lem:fox}
Let $\Omega$ be a finite probability space, let $X\colon\Omega\to\mathbb{R}_{\ge0}$
be a nonnegative random variable, and let $f\colon\mathbb{R}_{\ge0}\to\mathbb{R}$
be the convex function defined by
$$f\left(x\right):=\begin{cases}
x\log x & x > 0\\
0 & x=0,
\end{cases}$$
and let $\beta\in\left(0,1\right)$. Then
\[\mathbb{E}\left(f\left(X\right)\right) \ge f\left(\mathbb{E}\left(X\right)\right) +\left(1-\beta+f\left(\beta\right)\right)\Pr\left[X\leq \beta\mathbb{E}\left(X\right)\right]\mathbb{E}\left(X\right).\]
\end{lemma}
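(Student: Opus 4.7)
My plan is to prove this by a straightforward tangent-line argument for the convex function $f(x) = x \log x$, together with the observation that the slack in Jensen's inequality can be localized to the event $\{X \leq \beta \mathbb{E}[X]\}$.

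Write $\mu = \mathbb{E}[X]$, and assume $\mu > 0$ (otherwise $X = 0$ a.s.\ and the inequality is trivial). The starting point is the tangent-line inequality for the convex function $f$ at the point $\mu$: since $f'(t) = \log t + 1$, convexity of $f$ gives
\[
f(x) \geq f(\mu) + (\log \mu + 1)(x - \mu) \quad \text{for all } x \geq 0.
\]
Taking expectations already recovers Jensen's inequality; the goal is to quantify the slack when $X$ falls below $\beta \mu$.

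Define the nonnegative gap
\[
g(x) := f(x) - f(\mu) - (\log \mu + 1)(x - \mu), \qquad x \geq 0.
\]
A short calculation gives $g'(x) = \log(x/\mu)$, so $g$ is decreasing on $(0,\mu]$ and $g(\mu) = 0$. Consequently, for any $x \in [0, \beta \mu]$,
\[
g(x) \geq g(\beta \mu) = \beta \mu \log(\beta \mu) - \mu \log \mu - (\log \mu + 1)(\beta \mu - \mu) = \mu\bigl(1 - \beta + f(\beta)\bigr),
\]
after cancelling the $\log \mu$ terms. Note that $1 - \beta + f(\beta) \geq 0$ for $\beta \in (0,1)$, so this bound is meaningful.

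Combining the two bounds, we have $f(x) \geq f(\mu) + (\log\mu + 1)(x-\mu)$ for all $x \geq 0$, with the additional strengthening $f(x) \geq f(\mu) + (\log\mu+1)(x-\mu) + \mu(1-\beta+f(\beta))$ on $\{x \leq \beta\mu\}$. Taking expectations and using $\mathbb{E}[(\log\mu+1)(X-\mu)] = 0$ together with $\mu = \mathbb{E}[X]$ yields
\[
\mathbb{E}[f(X)] \geq f(\mu) + \bigl(1 - \beta + f(\beta)\bigr)\,\Pr[X \leq \beta \mu]\,\mu,
\]
as claimed. There is no real obstacle here; the entire content is the pointwise estimate $g(x) \geq g(\beta \mu)$ for $x \leq \beta \mu$, which follows from monotonicity of $g$ on $(0,\mu]$, and an algebraic identification of $g(\beta\mu)$ with $\mu(1-\beta+f(\beta))$.
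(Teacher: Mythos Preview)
Your proof is correct. The paper does not actually prove this lemma; it simply cites it as a restatement of Lemma~7 in Fox's graph removal paper~\cite{fox}, so there is no in-paper argument to compare against. Your tangent-line argument (equivalently, bounding the Bregman divergence $g(x)=f(x)-f(\mu)-f'(\mu)(x-\mu)$ from below on $[0,\beta\mu]$ by its value at the right endpoint) is the standard way to obtain this inequality and is essentially the argument Fox gives; the algebraic identification $g(\beta\mu)=\mu(1-\beta+f(\beta))$ and the monotonicity of $g$ on $(0,\mu]$ are exactly the two ingredients needed. One minor point worth making explicit: at $x=0$ you rely on continuity of $g$ (i.e.\ $x\log x\to0$) to extend the monotonicity bound to the closed interval, but this is immediate.
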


The following is an easy corollary of Lemma \ref{lem:fox}.
\begin{corollary}
\label{cor:foxconn} Let $f$ be the function in Lemma \ref{lem:fox}. For each $\lambda,\delta\in\left(0,1\right)$ and $C>0$,
there exists $\eta=\eta\left(\lambda,\delta,C\right)>0$ such that
the following holds. Let $\Omega$ be finite probability space, and
suppose that $\Pr(\omega) \geq \lambda$ for all $\omega \in\Omega$. Let $X\colon\Omega\to\mathbb{R}_{\ge0}$
be a nonnegative random variable such that $\mathbb{E}[X] \leq C$ and
\[
\mathbb{E}\left[f\left(X\right)\right]<f\left(\mathbb{E}[X]\right)+\eta.
\]
Then
\[
\|X-\mathbb{E}[X]\|_{\infty} < \delta.
\]
\end{corollary}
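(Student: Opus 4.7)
\medskip
\noindent\textbf{Proof plan.}
The plan is to argue by contrapositive. Writing $M:=\mathbb{E}[X]$, I will assume $\|X-M\|_{\infty}\ge \delta$ and deduce a lower bound on $\mathbb{E}[f(X)]-f(M)$ that depends only on $\lambda,\delta,C$; this determines the required $\eta$ up to a constant factor.

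The first step is to convert a two-sided deviation into a one-sided \emph{lower} deviation, since Lemma \ref{lem:fox} only controls mass strictly below $\beta\mathbb{E}[X]$. If instead we only have an upper deviation $X(\omega^{*})\ge M+\delta$ for some $\omega^{*}\in\Omega$, then $\Pr(\omega^{*})\ge\lambda$ together with the identity $M=\sum_{\omega}\Pr(\omega)X(\omega)$ forces a compensating atom below $M$: a direct balancing calculation gives
\[
\min_{\omega}X(\omega)\;\le\;\frac{M-\Pr(\omega^{*})(M+\delta)}{1-\Pr(\omega^{*})}\;\le\;M-\frac{\lambda\delta}{1-\lambda}.
\]
Consequently, in either case there exists $\omega_{0}\in\Omega$ with $\Pr(\omega_{0})\ge\lambda$ and $X(\omega_{0})\le M-\delta'$, where $\delta':=\min\bigl(\delta,\lambda\delta/(1-\lambda)\bigr)>0$ depends only on $\lambda,\delta$. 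Since $X\ge 0$, the existence of $\omega_{0}$ automatically forces $M\ge\delta'$.

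Next I will apply Lemma \ref{lem:fox} with $\beta:=1-\delta'/(2M)$, which lies in $[\tfrac{1}{2},\,1-\delta'/(2C)]$ using $\delta'\le M\le C$, and which satisfies $\beta M=M-\delta'/2\ge X(\omega_{0})$. The lemma then yields
\[
\mathbb{E}[f(X)]-f(M)\;\ge\;\bigl(1-\beta+f(\beta)\bigr)\cdot\Pr[X\le\beta M]\cdot M\;\ge\;g\bigl(1-\delta'/(2C)\bigr)\cdot\lambda\cdot\delta',
\]
using $\Pr[X\le\beta M]\ge\Pr(\omega_{0})\ge\lambda$ and $M\ge\delta'$, where $g(\beta):=1-\beta+\beta\log\beta$. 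Since $g(1)=0$ and $g'(\beta)=\log\beta<0$ on $(0,1)$, the function $g$ is strictly positive and strictly decreasing on $[0,1)$; in particular $\kappa:=g(1-\delta'/(2C))>0$ depends only on $\lambda,\delta,C$. Setting $\eta:=\tfrac{1}{2}\kappa\lambda\delta'$ then contradicts the hypothesis $\mathbb{E}[f(X)]<f(M)+\eta$.

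I do not expect any serious obstacle: the content is essentially a direct application of Lemma \ref{lem:fox}. The only genuine input is the expectation-balancing step converting upper deviations into lower ones, together with the elementary observation that $1-\beta+f(\beta)$ stays quantitatively bounded away from zero on any subinterval of $[0,1)$ bounded away from $1$, which is precisely where the hypotheses $\mathbb{E}[X]\le C$ and $\Pr(\omega)\ge\lambda$ enter.
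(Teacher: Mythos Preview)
Your proof is correct and follows essentially the same approach as the paper's own proof: argue by contradiction, reduce an upper deviation to a lower deviation via the expectation-balancing inequality $\min_\omega X(\omega)\le M-\lambda\delta/(1-\lambda)$, and then apply Lemma~\ref{lem:fox} with a suitable $\beta$ close to $1$. The only differences are cosmetic: the paper chooses $\beta=1-\delta'/M$ and bounds $1-\beta+f(\beta)$ from below via the explicit inequality $\gamma+f(1-\gamma)\ge\gamma^2/2$, yielding the concrete value $\eta=\min\{\lambda\delta^2/(2C),\ \lambda^3\delta^2/(2(1-\lambda)^2C)\}$, whereas you take $\beta=1-\delta'/(2M)$ and use monotonicity of $g(\beta)=1-\beta+\beta\log\beta$ on $(0,1)$; your halving is unnecessary but harmless, and your choice has the minor advantage of keeping $\beta\ge 1/2$ so that the hypothesis $\beta\in(0,1)$ of Lemma~\ref{lem:fox} is never in doubt.
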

Corollary \ref{cor:foxconn} says that if Jensen's inequality (\ref{eq:jensen}) is close to being an equality for the function $f$ and a `well-behaved' random variable $X$, then the random variable $X$ is highly concentrated around its mean.
\begin{proof}[Proof of Corollary \ref{cor:foxconn}]
Let $\eta=\eta\left(\lambda,\delta,C\right)>0$ to be chosen later. Suppose that
\[
\mathbb{E}\left[f\left(X\right)\right]<f\left(\mathbb{E}[X]\right)+\eta,
\]
and suppose for a contradiction that $\|X-\mathbb{E}[X]\|_{\infty} \geq \delta$. First suppose that $\min_{\omega \in \Omega}X(\omega) \leq \mathbb{E}[X] - \delta$. Then
$$\Pr\left[X \leq \left(1-\frac{\delta}{\mathbb{E}[X]}\right)\mathbb{E}[X]\right] \geq \lambda,$$
so by applying Lemma \ref{lem:fox} with $\beta = 1-\delta/\mathbb{E}[X]$, we have
$$\mathbb{E}[f(X)] \geq f(\mathbb{E}[X]) + \left( \frac{\delta}{\mathbb{E}[X]} + f\left(1-\frac{\delta}{\mathbb{E}[X]}\right)\right) \lambda \mathbb{E}[X].$$
Let $\gamma := \delta/\mathbb{E}[X]$; then $\gamma \in [0,1]$. It is easily checked that
$$\gamma+f(1-\gamma) \geq \gamma^2/2 \quad \forall \gamma \in [0,1].$$
It follows that
$$\mathbb{E}[f(X)] \geq f(\mathbb{E}[X]) + \frac{\lambda \delta^2}{2 \mathbb{E}[X]} \geq \frac{\lambda \delta^2}{2C}.$$

Second, suppose that $\min_{\omega \in \Omega}X(\omega) > \mathbb{E}[X] - \delta$; then $\max_{\omega \in \Omega}X(\omega) \geq \mathbb{E}[X] + \delta$. Let $M = \min_{\omega \in \Omega}X(\omega)$; then
$$(1-\lambda)M + \lambda (\mathbb{E}[X]+\delta) \leq \mathbb{E}[X],$$
so
$$M \leq \mathbb{E}[X] - \frac{\lambda \delta}{1-\lambda}.$$
Hence, by the argument above, replacing $\delta$ with $\lambda \delta /(1-\lambda)$, it follows that
$$\mathbb{E}[f(X)] \geq f(\mathbb{E}[X]) + \frac{\lambda^3 \delta^2}{2(1-\lambda)^2 C}.$$
Choosing
$$\eta = \min\left\{\frac{\lambda \delta^2}{2C},\frac{\lambda^3 \delta^2}{2(1-\lambda)^2C}\right\}$$
yields a contradiction.
\end{proof}

\subsection*{A `potential' argument}
The idea of the proof of Theorem \ref{thm:weak-reg} is to define a non-positive potential function $\phi:\ \p\left(\binom{[n]}{k}\right) \times \p([n]) \to [-1/e,0]$ with the following properties.
\begin{enumerate}
\item $\phi\left(\f,S\right)=0$ if and only if $\f$ is an $S$-junta.
\item If $S\subset S'\subset \left[n\right]$, then $\phi\left(\f,S\right)\le\phi\left(\f,S'\right)$.
\item If $k/n$ is bounded away from $0$ and $1$, then for any $\F \subset \binom{[n]}{k}$ and any $S\subset \left[n\right]$, {\em either} there exist sets $B_{1},\ldots B_{l}\subset S$ such that the junta $\left\langle \{ B_{1},\ldots,B_{l}\}\right\rangle $
satisfies the conclusion of Theorem \ref{thm:weak-reg}, {\em or}
there exists a set $S' \supset S$ that is not much larger than $S$, and such that $\phi\left(\f,S'\right)$
is significantly larger than $\phi\left(\f,S\right)$. In the latter case, we replace $S$ by $S'$ and repeat; since $\phi$ is bounded from above by $0$, the former case must occur after a bounded number of steps.
\end{enumerate}
We now proceed to define our potential function $\phi$.

\begin{definition}[The $\left(n,k,J\right)$-biased distribution]
For each $J \subset \left[n\right]$, we define the {\em $\left(n,k,J\right)$-biased distribution} on $\p\left(J\right)$ by
\[
\mu_{\left(n,k,J\right)}\left(B\right)=\Pr_{A\sim\binom{\left[n\right]}{k}}\left[A\cap J=B\right] \quad \forall B \subset J,
\]
where $A \sim \binom{[n]}{k}$ denotes a uniform random element of $\binom{[n]}{k}$. We write $B\sim\mu_{(n,k,J)}$ if $B$ is chosen according this distribution.

For $\f\subset \binom{\left[n\right]}{k}$ and $J\subset \left[n\right]$, we define $\alpha^B_{J}:=\mu\left(\f_{J}^{B}\right)$ for each $B \subset J$. We define our potential function $\phi:\ \p\left(\binom{[n]}{k}\right) \times \p([n])$ by
\begin{align*}
\phi\left(\f,J\right) & =\underset{B\sim\mu_{\left(n,k,J\right)}}{\mathbb{E}}\alpha^B_{J}\log\alpha^B_{J} \quad \forall \f \subset \binom{[n]}{k},\ J \subset [n].
\end{align*}
\end{definition}

Since $-1/e \leq f(x) \leq 0$ for all $x \in [0,1]$, we have $-1/e \leq \phi(\f,J) \leq 0$ for all $\f \subset \binom{[n]}{k}$ and all $J \subset [n]$; we have $\phi\left(\f,J\right)=0$ if and only if $\f$ is a $J$-junta. Moreover, it follows from Jensen's inequality that $\phi\left(\f,S\right)\le\phi\left(\f,S'\right)$ for any $S\subset S'$. We note the similarity between the definition of $\phi$ and the definition of the entropy of a random variable. We note also that the product space analogue of the function $\phi$ (in the slightly simpler setting where $\mu_{(n,k,J)}$ is replaced by a product distribution on $\p(J)$) was considered by Friedgut and Regev in \cite{fr}, and used in a similar way to in the sequel.

\begin{definition}
We say that a family $\f\subset \binom{\left[n\right]}{k}$ is {\em $\left(\eta,h\right)$-potentially
stable} if $\phi\left(\f,J\right) < \phi\left(\f,\varnothing\right)+\eta$ for all sets $J \subset [n]$ with $|J| \leq h$.
\end{definition}

We recall from the Introduction the definition of slice-quasirandomness.

\begin{definition}
If $\eta >0$ and $h \in \mathbb{N}$, we say that a family $\F \subset \binom{[n]}{k}$ is {\em $(\delta,h)$-slice-quasirandom} if for any $J \subset [n]$ with $|J| \leq h$, and any $B \subset J$, we have $|\mu(\F_J^B) - \mu(\F)| < \delta$.
\end{definition}

By virtue of Corollary \ref{cor:foxconn}, there is a close connection between potential stability and slice-quasirandomness. Indeed, the following lemma says that if $k/n$ is bounded away from $0$ and $1$, then $(\eta,h)$-potential stability implies $(\delta,h)$-slice-quasirandomness, provided $\eta$ is sufficiently small.

\begin{lemma}
\label{lem:potential-qr}
For any $\zeta,\delta >0,\ h\in\mathbb{N}$
there exist $\eta=\eta\left(\delta,h,\zeta\right)>0$ and
$n_{0}=n_{0}\left(\zeta,h\right) \in \mathbb{N}$ such that the following holds. Let
$n\ge n_{0}$, let $\zeta n\le k\le\left(1-\zeta\right)n$, and let
$\f\subset \binom{\left[n\right]}{k}$ be $\left(\eta,h\right)$-potentially
stable. Then $\f$ is $\left(\delta,h\right)$-slice-quasirandom.
\end{lemma}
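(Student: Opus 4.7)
The plan is to reduce Lemma \ref{lem:potential-qr} directly to Corollary \ref{cor:foxconn}. First, I would observe the identity $\mathbb{E}_{B \sim \mu_{(n,k,J)}}[\alpha^B_J] = \mu(\f)$, which is just the law of total probability: if $A$ is uniform on $\binom{[n]}{k}$, then
\[
\mathbb{E}_{B \sim \mu_{(n,k,J)}}[\alpha^B_J] \;=\; \sum_{B \subset J} \Pr[A \cap J = B]\,\Pr[A \in \f \mid A \cap J = B] \;=\; \Pr[A \in \f] \;=\; \mu(\f).
\]
In particular, $\phi(\f, \emptyset) = \mu(\f)\log\mu(\f) = f(\mathbb{E}_{B}[\alpha^B_J])$, where $f$ is the convex function from Lemma \ref{lem:fox}. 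Therefore, the $(\eta,h)$-potential stability hypothesis is precisely
\[
\mathbb{E}_{B \sim \mu_{(n,k,J)}}\bigl[f(\alpha^B_J)\bigr] \;<\; f\bigl(\mathbb{E}_{B \sim \mu_{(n,k,J)}}[\alpha^B_J]\bigr) + \eta
\]
for every $J \subset [n]$ with $|J| \leq h$, which is exactly the hypothesis needed to apply Corollary \ref{cor:foxconn} to the nonnegative random variable $X = \alpha^B_J$ on the probability space $(\p(J), \mu_{(n,k,J)})$, with $C = 1$ (since $\alpha^B_J \leq 1$).

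To deploy the corollary, I need a uniform lower bound $\lambda = \lambda(\zeta,h) > 0$ on the probability of each atom of $\mu_{(n,k,J)}$. For any $B \subset J$ with $|J| \leq h$,
\[
\mu_{(n,k,J)}(B) = \binom{n-|J|}{k-|B|}\Big/\binom{n}{k} \;=\; (k/n)^{|B|}(1-k/n)^{|J|-|B|} + o_{n \to \infty}(1),
\]
where the error is uniform in $|J| \leq h$. Since $\zeta \leq k/n \leq 1-\zeta$ and $|J| \leq h$, the main term is bounded below by $\min(\zeta, 1-\zeta)^h \geq \zeta^h$. Hence for $n \geq n_0(\zeta,h)$ sufficiently large, every atom has mass at least $\lambda := \tfrac{1}{2}\zeta^h$.

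Setting $\eta := \eta(\lambda, \delta, 1)$ as furnished by Corollary \ref{cor:foxconn}, the corollary gives $\|\alpha^B_J - \mathbb{E}[\alpha^B_J]\|_\infty < \delta$, i.e.\ $|\mu(\f^B_J) - \mu(\f)| < \delta$ for every $B \subset J$. Since this holds for every $J$ with $|J| \leq h$, the family $\f$ is $(\delta,h)$-slice-quasirandom, as required. I don't anticipate any real obstacle here: the key conceptual point is recognising $(\eta,h)$-potential stability as the approximate-equality case in Jensen's inequality for $f$, at which point Corollary \ref{cor:foxconn} does the work, provided only that one checks the mild lower bound on the atoms of $\mu_{(n,k,J)}$ afforded by $k/n$ being bounded away from $0$ and $1$.
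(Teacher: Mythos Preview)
Your proposal is correct and follows essentially the same approach as the paper: both set $X(B)=\alpha_J^B$ on $(\p(J),\mu_{(n,k,J)})$, observe that $(\eta,h)$-potential stability is exactly the near-equality hypothesis of Corollary~\ref{cor:foxconn} with $C=1$, and verify a uniform lower bound of order $\zeta^h$ on the atoms of $\mu_{(n,k,J)}$ (the paper uses the explicit bound $(\zeta/2)^h$ for $n\ge 2h/\zeta$, while you use the asymptotic form with $\lambda=\tfrac12\zeta^h$, which is equivalent).
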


\begin{proof}
Let $\f$ be as in the statement of the lemma, and let $\eta = \eta(\delta,h,\zeta) >0$ to be chosen later. Let $J \subset [n]$ with $|J| \leq h$. Let $\Omega = \p(J)$, and equip  $\Omega$ with the probability distribution $\mu_{(n,k,J)}$. Since $\zeta n \leq k \leq (1-\zeta)n$, we have
$$\Pr(\omega) \geq \frac{\binom{n-h}{\lceil \zeta n \rceil-h}}{\binom{n}{\lceil \zeta n \rceil}} \geq \left(\frac{\zeta n-h+1}{n-h+1}\right)^h \geq (\zeta/2)^h\quad \forall \omega \in \Omega,$$
provided $n \geq 2h/\zeta$.

Let $X$ be the non-negative random variable defined by $X(B) = \alpha_J^B$. Then we have $\mathbb{E}[X] = \mu(\F)$, $f(\mathbb{E}[X]) = \mu(\F) \log \mu(\F) = \phi(\F,\varnothing)$ and $\mathbb{E}[f(X)] = \phi(\F,J)$. The fact that $\F$ is $(\eta,h)$-potentially stable implies that $\mathbb{E}[f(X)] < f(\mathbb{E}[X]) + \eta$. Corollary \ref{cor:foxconn} (with $\lambda = (\zeta/2)^h$ and $C=1$) implies that $\|X - \mathbb{E}[X]\|_{\infty} < \delta$, provided $\eta$ is sufficiently small depending on $\delta,h$ and $\zeta$. This in turn implies that $|\mu(\F_J^B) - \mu(\F)| < \delta$ for any $B \subset J$. It follows that $\F$ is $(\delta,h)$-slice-quasirandom, as required.
\end{proof}

Armed with this lemma, we can now prove Theorem \ref{thm:weak-reg}.

\begin{proof}[Proof of Theorem \ref{thm:weak-reg}]
Let $\eta = \eta(\delta,h,\zeta/2)$ be as in the statement of Lemma \ref{lem:potential-qr}, and let $n_0 \in \mathbb{N}$ to be chosen later, depending on $\delta,h,\zeta$ and $\epsilon$. Given a set $S\subset \left[n\right]$, we define a partition of $\p(S)$ into three parts:
\begin{itemize}
\item We let $\g_{S}\subset \p\left(S\right)$ be the family of all sets $B\subset S$, such that $\mu\left(\f_{S}^{B}\right)>\frac{\epsilon}{2}$, and such that the family $\f_{S}^{B}$ is $\left(\eta,h\right)$-potentially
stable. We call these the `good' sets.
\item We let $\b_{S} \subset \p(S)$ be the family of all sets $B\subset S$, such that $\mu\left(\f_{S}^{B}\right)>\frac{\epsilon}{2}$, and such
that the family $\f_{S}^{B}$ is {\em not} $\left(\eta,h\right)$-potentially
stable. We call these the `bad' sets.
\item We let $\mathcal{E}_{S} \subset \p(S)$ be the family of all sets $B\subset S$, such
that $\mu\left(\f_{S}^{B}\right)\leq\frac{\epsilon}{2}$. We call these the `exceptional' sets.
\end{itemize}

By Lemma \ref{lem:potential-qr} (applied with $\zeta/2$ in place of $\zeta$), for each $B \in \G_S$, the family $\f_{S}^{B}$ is $\left(\delta,h\right)$-slice-quasirandom, provided $\eta$ is sufficiently small depending on $\zeta,\delta$ and $h$, and provided $n$ is sufficiently large depending on $\zeta$, $h$ and $|S|$. It suffices to show that there exists a set $S$ of size bounded from above in terms of $\zeta,\delta,h$ and $\epsilon$, such that $\mu\left(\f\backslash\left\langle \g_{S}\right\rangle \right)<\epsilon$. The next claim says that if this does not hold, then $S$ can be replaced by
a set $S' \supset S$ of size at most $h\cdot2^{\left|S\right|}$, such that
$\phi\left(\f,S'\right)$ is significantly larger than $\phi\left(\f,S\right)$.
\begin{claim}
\label{claim:split}
For each $S\subset \left[n\right]$, we either have $\mu\left(\f\backslash\left\langle \g_{S}\right\rangle \right)<\epsilon$,
or else there exists a set $S' \supset S$ with $|S'| \leq h2^{\left|S\right|}+\left|S\right|$,
such that
\[
\phi\left(\f,S'\right)\ge\phi\left(\f,S\right)+\eta\epsilon/2.
\]
\end{claim}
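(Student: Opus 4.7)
The plan is to assume the first alternative fails and then build $S'$ by enlarging $S$ with witness sets that expose the instability of each ``bad'' slice. Suppose $\mu(\f \setminus \langle \g_S \rangle) \geq \epsilon$. First I would split the contribution according to $B = A \cap S$:
\[
\mu(\f \setminus \langle \g_S \rangle) \ = \ \sum_{B \notin \g_S} \mu_{(n,k,S)}(B) \, \alpha_S^B.
\]
The exceptional part contributes at most $\epsilon/2$ since $\alpha_S^B \leq \epsilon/2$ for $B \in \mathcal{E}_S$; hence, using $\alpha_S^B \leq 1$, $\sum_{B \in \b_S} \mu_{(n,k,S)}(B) \geq \epsilon/2$. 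For each $B \in \b_S$, the failure of $(\eta,h)$-potential stability of $\f_S^B$ provides a witness $J_B \subset [n] \setminus S$ with $|J_B| \leq h$ and $\phi(\f_S^B, J_B) \geq \phi(\f_S^B, \varnothing) + \eta$. I would then set $S' := S \cup \bigcup_{B \in \b_S} J_B$, which has size at most $|S| + h \cdot 2^{|S|}$, as required.

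The crux of the proof will be a chain-rule identity for $\phi$, namely
\[
\phi(\f, S') - \phi(\f, S) \ = \ \underset{B \sim \mu_{(n,k,S)}}{\mathbb{E}} \left[ \phi(\f_S^B, S' \setminus S) - \phi(\f_S^B, \varnothing) \right].
\]
To establish this I would unwind definitions using two ingredients. The first is the product decomposition $\mu_{(n,k,S')}(B \cup C) = \mu_{(n,k,S)}(B) \cdot \mu_{(n-|S|, k-|B|, S' \setminus S)}(C)$ for $B \subset S$ and $C \subset S' \setminus S$, which is an immediate telescoping of the binomial ratios $\binom{n-|S'|}{k-|B|-|C|}/\binom{n}{k}$, $\binom{n-|S|}{k-|B|}/\binom{n}{k}$ and $\binom{n-|S'|}{k-|B|-|C|}/\binom{n-|S|}{k-|B|}$. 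The second is the slice identity $(\f_S^B)_{S' \setminus S}^C = \f_{S'}^{B \cup C}$ (with matching $\mu$-densities since both are subsets of $\binom{[n] \setminus S'}{k - |B| - |C|}$ of the same cardinality). Writing $\phi(\f, S')$ as a double sum over $B \subset S$ and $C \subset S' \setminus S$, the inner sum is exactly $\phi(\f_S^B, S' \setminus S)$, and $\phi(\f, S)$ is immediately $\mathbb{E}_B[\phi(\f_S^B, \varnothing)]$.

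With the chain-rule identity in hand, the finish is immediate. By monotonicity of $\phi$ in its second argument (noted right after the definition, and ultimately a consequence of Jensen's inequality), every summand on the right is non-negative; and for $B \in \b_S$, since $J_B \subset S' \setminus S$, monotonicity yields
\[
\phi(\f_S^B, S' \setminus S) \ \geq \ \phi(\f_S^B, J_B) \ \geq \ \phi(\f_S^B, \varnothing) + \eta.
\]
Summing over $B$ gives $\phi(\f, S') - \phi(\f, S) \geq \eta \sum_{B \in \b_S} \mu_{(n,k,S)}(B) \geq \eta \epsilon/2$, as required. The only genuine obstacle is verifying the chain-rule identity; everything else is bookkeeping, and the exceptional sets cause no trouble thanks to the a priori bound $\alpha_S^B \leq \epsilon/2$ built into their definition.
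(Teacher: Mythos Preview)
Your proposal is correct and follows essentially the same approach as the paper's proof: the same splitting into bad and exceptional sets to obtain $\mu_{(n,k,S)}(\b_S)\ge\epsilon/2$, the same construction of $S'$ from witness sets $J_B$, and the same chain-rule identity combined with monotonicity of $\phi$ in its second argument. You in fact supply more detail than the paper does in verifying the chain-rule identity (via the product decomposition of $\mu_{(n,k,S')}$ and the slice identity), which the paper simply asserts.
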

\begin{proof}
Suppose that $\mu\left(\f\backslash\left\langle \g_{S}\right\rangle \right) \geq \epsilon$.
We have
\begin{align*}
\epsilon & \leq \mu\left(\f\backslash\left\langle \g_{S}\right\rangle \right)\\
& =\sum_{B \subset S:\atop B\notin\g_{S}}\mu_{\left(n,k,S\right)}\left(B\right)\mu\left(\f_{S}^{B}\right)\\
 & =\sum_{B\in\b_{S}}\mu_{\left(n,k,S\right)}\left(B\right)\mu\left(\f_{S}^{B}\right)+\sum_{B\in\mathcal{E}_{S}}\mu_{\left(n,k,S\right)}\left(B\right)\mu\left(\f_{S}^{B}\right)\\
 & \le\mu_{\left(n,k,S\right)}\left(\b_{S}\right)+\epsilon/2,
\end{align*}
and therefore
\[
\mu_{\left(n,k,S\right)}\left(\b_{S}\right)\ge \epsilon/2.
\]
 For each $B\in\b_S$, let $S_{B} \subset [n] \setminus S$ such that $|S_B|\leq h$ and
\[
\phi\left(\f_{S}^{B},S_{B}\right)\ge\phi\left(\f_{S}^{B},\varnothing\right)+\eta.
\]
Writing
\[
S'=\left(\bigcup_{B\in\b_{S}}S_{B}\right)\dot\cup\ S,
\]
we have
\begin{align*}
\phi\left(\f,S'\right)-\phi(\f,S) & =\underset{B\sim\mu_{(n,k,S)}}{\mathbb{E}}\left[\phi\left(\f_{S}^{B},S'\backslash S\right)-\phi\left(\f_S^B,\varnothing\right)\right]\\
 & \ge\underset{B\sim\mu_{(n,k,S)}}{\mathbb{E}}\left[\mathbf{1}_{B\in\b_S}\left(\phi\left(\f_{S}^{B},S_{B}\right) -\phi\left(\f_S^B,\varnothing\right)\right)\right]\\
 & \ge\underset{B\sim\mu_{(n,k,S)}}{\mathbb{E}}\left[\mathbf{1}_{B\in\b_S} \cdot \eta\right]\\
 & = \eta \cdot \mu_{(n,k,S)}(\b_S)\\
 & \ge\eta\epsilon/2.
\end{align*}
 This proves the claim.
\end{proof}
Let $J_{0}=\varnothing$. By Claim \ref{claim:split}, we either have $\mu\left(\f\backslash\left\langle\g_{J_{0}}\right\rangle\right)<\epsilon$,
or else there exists $J_{1} \subset [n]$ such that $|J_1| \leq h\cdot 2^{0}+0$ and
\[
\phi\left(\f,J_{1}\right)\ge\phi\left(\f,J_{0}\right)+\eta\epsilon/2.
\]
We now repeat the process with the set $J_{1}$, and so on, producing, for each $m \in \mathbb{N}$ with $m \leq 2/(e \eta \epsilon)$, a set $J_{m} \subset [n]$, such that either
\[
\mu\left(\f\backslash\left\langle \g_{J_{m-1}}\right\rangle \right)<\epsilon,
\]
or else
\[
\phi\left(\f,J_{m}\right)\ge\phi\left(\f,\g_{J_{m-1}}\right)+\eta\epsilon/2,
\]
Since
\[
\phi\left(\f,\varnothing\right)\ge-\frac{1}{e},
\]
there exists $m\le\frac{2}{e\eta\epsilon}$ such that
\[
\mu\left(\f\backslash\left\langle \g_{J_{m-1}}\right\rangle \right)<\epsilon.
\]
The size of $J_{m-1}$ is bounded above by a constant depending only upon $\eta$, $h$ and
$\epsilon$. By definition, $\F_{J_{m-1}}^B$ is $(\eta,h)$-potentially-stable, for each $B \in \G_{J_{m-1}}$. Provided $n_0$ is sufficiently large (depending on $\zeta$, $\eta$, $h$ and $\epsilon$), we may apply Lemma \ref{lem:potential-qr} with $\zeta/2$ in place of $\zeta$, $n-|J_{m-1}|$ in place of $n$ and $k-|B|$ in place of $k$, implying that $\F_{J_{m-1}}^B$ is $(\delta,h)$-slice-quasirandom for each $B \in \G_{J_{m-1}}$. Hence, we may take $J = J_{m-1}$, completing the proof.
\end{proof}

\begin{remark}
\label{remark:counting}
We note that there is no general `counting lemma' for $(\eta,h)$-slice-quasirandom families, for hypergraphs with a bounded number of edges --- unsurprisingly perhaps, given the relative weakness of the constraint. To see this, let $n$ be even, let $k=n/2$, and consider a random family $\F$ produced by including exactly one of $S$ and $[n] \setminus S$ (with probability $1/2$ each), independently at random for each pair $\{S,[n] \setminus S\} \subset \binom{[n]}{n/2}$. Clearly, $\mu(\F) = \tfrac{1}{2}$, and $\F$ contains no copy of the hypergraph consisting of two disjoint edges. Moreover, for any $J \subset [n]$ and any $B \subset J$, $|\F_J^B| \sim \Bin(\binom{n-|J|}{k-|B|},1/2)$, and therefore by a Chernoff bound,
$$\Pr[|\mu(\F_J^B) - 1/2| \geq \eta] < 2\exp\left(-\frac{2}{3}\eta^2 \binom{n-|J|}{k-|B|}\right).$$
Hence, using a union bound, the probability that there exists $J \subset [n]$ with $|J| \leq h$ and $B \subset J$ such that $|\mu(\F_J^B) - 1/2| \geq \eta$ is at most
$$\sum_{J \subset [n]:\atop |J| \leq h} \sum_{B \subset J} 2\exp\left(-\frac{2}{3}\eta^2 \binom{n-|J|}{k-|B|}\right) \leq 2 \cdot 2^h \cdot \left(\sum_{i=1}^{h} {n \choose i}\right) \exp\left(-\frac{2}{3} \binom{n-h}{n/2-h} \right) = o(1),$$
as $n \to \infty$ for any fixed $\eta$ and $h$. Therefore, $\F$ is $(\eta,h)$-slice-quasirandom with high probability.
\end{remark}

To obtain results on families with a forbidden intersection-size, we will need the following property of pairs of slice-quasirandom families.
\begin{lemma}
\label{lem:qrc} For any
$\epsilon,\zeta\in\left(0,1\right)$ and any $t \in \mathbb{N}$,
there exist $\eta = \eta(\epsilon)>0$, $h_0 = h_0(\epsilon,\zeta,t) \in \mathbb{N}$ and $n_0 = n_0(\epsilon,\zeta,t) \in \mathbb{N}$ such that the following holds. Let $t-1 \leq k_{1},k_{2}\le\left(\frac{1}{2}-\zeta\right)n$,
and let $\A \subset \binom{\left[n\right]}{k_{1}},\ \B\subset \binom{\left[n\right]}{k_{2}}$
be $\left(\epsilon/5,h_0\right)$-slice-quasirandom families, with $\mu(\A) \geq \epsilon$ and $\mu(\B)\geq \epsilon$. Then there exist $A\in\A$ and $B\in\B$
such that $\left|A\cap B\right|=t-1$. (We may take $\eta = \epsilon/5$.)
\end{lemma}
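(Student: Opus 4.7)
The plan is to reduce the problem to the $t = 1$ case (finding a disjoint pair), and then derive a contradiction from Lemma~\ref{lem:dfv}, using the slice-quasirandomness of $\A$ and $\B$ twice.

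First, I would fix an arbitrary set $J \subset [n]$ with $|J| = t-1$. Taking $h_0 \geq t-1$, the $(\epsilon/5, h_0)$-slice-quasirandomness of $\A$ gives $\mu(\A_J^J) > \mu(\A) - \epsilon/5 \geq 4\epsilon/5$, and similarly $\mu(\B_J^J) > 4\epsilon/5$, where $\A_J^J \subset \binom{[n]\setminus J}{k_1 - (t-1)}$ and $\B_J^J \subset \binom{[n]\setminus J}{k_2 - (t-1)}$. The key observation is that to find $A \in \A$ and $B \in \B$ with $|A\cap B| = t-1$, it suffices to find $A' \in \A_J^J$ and $B' \in \B_J^J$ with $A' \cap B' = \emptyset$: then $A = A' \cup J$ and $B = B' \cup J$ intersect precisely in $J$.

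Suppose for a contradiction that no such disjoint pair exists. Then $\A_J^J$ and $\B_J^J$ are cross-intersecting subfamilies of $\binom{[n]\setminus J}{k_1 - (t-1)}$ and $\binom{[n]\setminus J}{k_2 - (t-1)}$ respectively. For $n$ sufficiently large depending on $\zeta$ and $t$, we have $k_i - (t-1) \leq (1/2 - \zeta/2)(n - (t-1))$, so I can apply Lemma~\ref{lem:dfv} (with parameters $\zeta/2$ and $4\epsilon/5$) to obtain a set $S \subset [n]\setminus J$ with $|S| \leq s(\zeta/2, 4\epsilon/5)$ such that $\mu\bigl((\A_J^J)_S^{\emptyset}\bigr) < 2\epsilon/5$.

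To conclude, observe that $(\A_J^J)_S^{\emptyset} = \A_{J\cup S}^{J}$. Choosing $h_0 \geq (t-1) + s(\zeta/2, 4\epsilon/5)$, the slice-quasirandomness of $\A$ applied to the set $J \cup S$ gives $\mu(\A_{J\cup S}^J) > \mu(\A) - \epsilon/5 \geq 4\epsilon/5$, which contradicts the bound $2\epsilon/5$ from Lemma~\ref{lem:dfv}. This choice of $h_0$ (depending only on $\epsilon, \zeta, t$) and the corresponding $n_0$ proves the lemma. The argument is fairly mechanical once one sees the right move; the only genuine subtlety is in choosing which slice to pass to --- one must fix $J$ to be \emph{contained in} both $A$ and $B$ (rather than restricting to the $\emptyset$-slice), so that a disjoint pair in the slices corresponds exactly to an intersection of size $t-1$, not merely at most $t-1$. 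The parameter bookkeeping (ensuring $h_0$ accommodates both applications of slice-quasirandomness) is the only real technicality.
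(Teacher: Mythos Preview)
Your proof is correct and follows essentially the same approach as the paper: pass to the slice $\A_J^J,\B_J^J$ with $|J|=t-1$, then use Lemma~\ref{lem:dfv} to show these cannot be cross-intersecting, deriving a contradiction from slice-quasirandomness. The only cosmetic difference is that the paper first observes that $\A_{[t-1]}^{[t-1]}$ is itself $(2\eta,h_0-t+1)$-slice-quasirandom and applies this to $S$, whereas you apply the slice-quasirandomness of $\A$ directly to $J\cup S$; these are equivalent pieces of bookkeeping.
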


This lemma says that if $\A$ and $\B$ are sufficiently slice-quasirandom, and have uniformity bounded away from $n/2$, then we can find a pair of sets $A \in \A,\ B \in \B$ with any bounded intersection-size. The idea of the proof is to use the slice-quasirandomness property to reduce to the case of $t=1$, where we can apply a result about cross-intersecting families (Lemma \ref{lem:dfv}). Note that Lemma \ref{lem:dfv} immediately implies the $t=1$ case of Lemma \ref{lem:qrc}.
\begin{proof}[Proof of Lemma \ref{lem:qrc}]
Let $n \geq n_0$ and let $\A \subset \binom{[n]}{k_1}$ and $\B\subset \binom{\left[n\right]}{k_2}$ be $\left(\eta,h_0\right)$-slice-quasirandom families, such that $\mu\left(\A\right) \geq \epsilon$ and $\mu\left(\B\right)\ge\epsilon$,
where $\eta = \epsilon/5$, and $h_0,n_0$ are to be chosen later.

It is easy to check that the families $\A':=\A_{\left[t-1\right]}^{\left[t-1\right]},\ \B':=\B_{\left[t-1\right]}^{\left[t-1\right]}$
are $\left(2\eta,h_0-t+1\right)$-slice-quasirandom families with
\[
\mu\left(\A'\right),\mu\left(\B'\right)\ge\epsilon-\eta.
\]
 By Lemma \ref{lem:dfv}, if $\A'$ and $\B'$ were cross-intersecting, then there would exist $S \subset [n]$ with $|S| \leq s(\zeta,\epsilon-\eta)$ and $\mu((\A')_{S}^{\varnothing}) < (\epsilon-\eta)/2$, provided $n_0$ is sufficiently large depending on $\epsilon$ and $\zeta$. But then
 $$\mu(\A') - \mu((\A')_{S}^{\varnothing}) > \epsilon-\eta - (\epsilon-\eta)/2 \geq 2\eta,$$
contradicting the fact that $\A'$ is $(2\eta, h_0-t+1)$-slice-quasirandom, provided $h_0-t+1 \geq s(\epsilon-\eta,\zeta)$. Hence, there exist $C\in\A'$ and $D\in\B'$ such that $C \cap D = \varnothing$. We have $C\cup\left[t-1\right]\in\A$, $D\cup\left[t-1\right]\in\B$ and $|(C \cup [t-1]) \cap (D \cup [t-1])|=t-1$, as required.
\end{proof}

\section{Proof of our forbidden intersection theorem}
\label{sec:main-result-es}

\subsection{Approximations by juntas}

We can now prove that if $k/n$ is bounded away from $0$ and $1/2$, and $\F \subset \binom{[n]}{k}$ is a family such that no pair of sets in $\F$ have intersection of size $t-1$, then $\F$ is approximately contained within a $t$-intersecting junta.

\begin{theorem}
\label{thm:junta-approx}
For any $\epsilon,\zeta>0$ and $t\in \mathbb{N}$ there exists $j=j(t,\zeta,\epsilon) \in \mathbb{N}$ and $n_1= n_1(t,\zeta,\epsilon) \in \mathbb{N}$ such that the following holds. Let $n \geq n_1$, let $\zeta n \leq k \leq \left(\frac{1}{2}-\zeta\right)n$, and
let $\f\subset \binom{\left[n\right]}{k}$ such that no two sets in $\f$ have intersection of size $t-1$. Then there
exists a $t$-intersecting $j$-junta $\j$ such that $|\f\backslash\j|<\epsilon\binom{n}{k}$.
\end{theorem}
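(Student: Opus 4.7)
The plan is to combine the weak regularity lemma (Theorem~\ref{thm:weak-reg}) with the slice-quasirandom pair lemma (Lemma~\ref{lem:qrc}) to show that any family avoiding intersection-size $t-1$ is approximately contained in a $t$-intersecting junta. First I would choose parameters $\delta := \epsilon/10$ and $h := h_0(\epsilon/2,\zeta/2,t)+|J|_{\max}$, where $h_0$ is the constant from Lemma~\ref{lem:qrc}, and then apply Theorem~\ref{thm:weak-reg} to $\f$ with parameters $(\zeta,\delta,h,\epsilon)$. This produces a set $J\subset[n]$ with $|J|\le j=j(\zeta,\delta,h,\epsilon)$ and a family $\G\subset \p(J)$ such that $\mu(\f\setminus\langle\G\rangle)<\epsilon$ and, for each $B\in\G$, the slice $\f_J^B\subset \binom{[n]\setminus J}{k-|B|}$ is $(\delta,h)$-slice-quasirandom with $\mu(\f_J^B)>\epsilon/2$.

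The main step is to show that the $J$-junta $\J:=\langle\G\rangle$ is $t$-intersecting. I claim that this is equivalent to showing $|B_1\cap B_2|\ge t$ for all $B_1,B_2\in\G$: indeed, since $|J|$ is bounded and $k=\Theta(n)$, for $n$ large any $B_1,B_2\in\G$ extend to sets $A_1,A_2\in\J$ whose intersection outside $J$ has any prescribed size, so the $t$-intersecting property of $\J$ is equivalent to the inequality $|B_1\cap B_2|\ge t$ for all $B_1,B_2\in\G$. Suppose for contradiction that there exist $B_1,B_2\in\G$ with $s:=|B_1\cap B_2|\le t-1$. I would apply Lemma~\ref{lem:qrc} to the families $\A:=\f_J^{B_1}$ and $\B:=\f_J^{B_2}$, viewed inside $\binom{[n]\setminus J}{k-|B_i|}$, with the lemma's parameter $t$ replaced by $t-s\ge 1$ and the role of $\zeta$ played by (say) $\zeta/2$. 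For $n$ large enough, the uniformities $k-|B_i|$ lie in $[t-s-1,(\tfrac12-\zeta/2)(n-|J|)]$; moreover, both measures exceed $\epsilon/2$, and $\delta=\epsilon/10\le (\epsilon/2)/5$ together with the choice of $h$ ensure the hypotheses of Lemma~\ref{lem:qrc} are satisfied. The lemma thus produces $A_1'\in\f_J^{B_1}$ and $A_2'\in\f_J^{B_2}$ with $|A_1'\cap A_2'|=t-s-1$. Setting $A_i:=A_i'\cup B_i\in\f$, we obtain
\[
|A_1\cap A_2|=|B_1\cap B_2|+|A_1'\cap A_2'|=s+(t-s-1)=t-1,
\]
contradicting the hypothesis on $\f$.

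Therefore $\J$ is $t$-intersecting, and by Theorem~\ref{thm:weak-reg}, $|\f\setminus\J|=\mu(\f\setminus\J)\binom{n}{k}<\epsilon\binom{n}{k}$, with $\J$ a $j$-junta for $j=j(t,\zeta,\epsilon)$ given by the regularity lemma, which proves the theorem. The main obstacle is parameter bookkeeping: the slice-quasirandomness parameter $\delta$ and the locality parameter $h$ used in Theorem~\ref{thm:weak-reg} must be chosen in advance to be fine enough so that Lemma~\ref{lem:qrc} applies to every relevant pair of slices $\f_J^{B_1}$, $\f_J^{B_2}$ after passing to the ambient ground set $[n]\setminus J$ with the mildly altered density ratio $(k-|B_i|)/(n-|J|)$; once these are set up correctly, the argument is essentially contradiction by one application of each of the two main tools.
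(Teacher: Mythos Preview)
Your proposal is correct and follows essentially the same approach as the paper's proof: apply the weak regularity lemma (Theorem~\ref{thm:weak-reg}) to obtain the junta $\langle\G\rangle$, then use Lemma~\ref{lem:qrc} on a pair of slices $\f_J^{B_1},\f_J^{B_2}$ with $|B_1\cap B_2|<t$ to manufacture a forbidden intersection of size $t-1$. Two small bookkeeping remarks: the additive ``$+|J|_{\max}$'' in your choice of $h$ is both unnecessary (the slices already live in $\binom{[n]\setminus J}{k-|B|}$, so their $(\delta,h)$-slice-quasirandomness is with respect to subsets of $[n]\setminus J$) and circular (since $j$ depends on $h$); simply drop it. Also, since Lemma~\ref{lem:qrc} is invoked with its parameter $t$ replaced by $t-s$ for an \emph{a priori} unknown $s\in\{0,\ldots,t-1\}$, you should take $h\ge\max_{1\le t'\le t}h_0(\epsilon/2,\zeta/2,t')$ rather than the single value $h_0(\epsilon/2,\zeta/2,t)$.
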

The proof uses our `weak regularity lemma' (Theorem \ref{thm:weak-reg}) to find a $J$-junta $\j$ in which the family $\f$ is approximately contained, and such that for each $B \subset J$ with $B \in \J$, the slice $\F_J^B$ is highly slice-quasirandom and not too small; we then use Lemma \ref{lem:qrc} to show that the junta $\j$ must be $t$-intersecting.
\begin{proof}[Proof of Theorem \ref{thm:junta-approx}]
Let $t \in \mathbb{N}$, let $\epsilon,\zeta >0$, let $\eta = \epsilon/10$ and let $h,n_1 \in \mathbb{N}$ to be chosen later (depending on $t,\zeta$ and $\epsilon$). Let $n \geq n_1$, let $\zeta n \leq k \leq (\tfrac{1}{2}-\zeta)n$, and let $\f\subset \binom{\left[n\right]}{k}$ be a family containing no pair of sets whose intersection has size $t-1$.

By Theorem \ref{thm:weak-reg}, there exists $j = j(\zeta,\eta,h,\epsilon) \in \mathbb{N}$, a set $J \subset [n]$ with $|J| \leq j$, and a $J$-junta $\j = \langle \G \rangle$ (i.e., $\G \subset \p(J)$), such that $\mu\left(\f\backslash\j\right)<\epsilon,$
and such that for each $B\in\G$, the family $\f_{J}^{B}$ is an $\left(\eta,h\right)$-slice-quasirandom
family with $\mu(\F_J^B) \geq \epsilon/2$. It suffices to show that the junta $\j$ is $t$-intersecting.

Suppose for a contradiction that there exist $A_{1},A_{2}\in\J$
such that $|A_{1}\cap A_{2}|<t$. Then there exist $B_{1},B_{2} \in \G$
and $C_{1},C_{2} \subset \left[n\right]\backslash J$, such that
\[
A_{1}=B_{1}\cup C_{1},A_{2}=B_{2}\cup C_{2}.
\]
Note that the families $\F_{J}^{B_1}$ and $\F_{J}^{B_2}$ are each $(\eta,h)$-slice quasirandom with measure at least $\epsilon/2$. Write $\left|B_{1}\cap B_{2}\right|=:t' \leq t-1$. Provided $h \geq \max_{t'' \in [t-1] \cup \{0\}} h_0(\epsilon/2,\zeta/2,t'')$ and $n_1$ is sufficiently large depending on $t$, $\zeta$ and $\epsilon$, Lemma \ref{lem:qrc} (applied with $\A = \F_{J}^{B_1}$, with $\B = \F_{J}^{B_2}$, with $\epsilon/2$ in place of $\epsilon$ and with $\zeta/2$ in place of $\zeta$) implies that there exist $D_{1}\in\f_{J}^{B_{1}}$ and $D_{2}\in\f_{J}^{B_{2}}$ such that
\[
\left|D_{1}\cap D_{2}\right|=t-1-t'.
\]
This is a contradiction, since $B_{1}\cup D_{1} \in \f$, $B_{2}\cup D_{2} \in \f$
and $|(B_{1}\cup D_{1}) \cap (B_{2}\cup D_{2})| = t-1$.
\end{proof}

\subsection{A stability result for the forbidden intersection problem}

We now apply Theorems \ref{thm:main-ak-stability} and \ref{thm:junta-approx} to obtain the following stability
version of Theorem \ref{thm:main-result-es}. For brevity, we say that a family $\f \subset \binom{[n]}{k}$ is a {\em Frankl family} if there exists a set $S \in \binom{[n]}{t+2r}$ such that $\f = \{A \in \binom{\left[n\right]}{k}:\ \left|A\cap S\right|\ge t+r\}$. If $t \in \mathbb{N}$ and $\zeta >0$ are fixed and $\zeta n \leq k \leq (1/2-\zeta)n$, we say such a Frankl family is a {\em Frankl junta} if $r$ is bounded from above in terms of $t$ and $\zeta$.
\begin{theorem}
\label{thm:stability-es} For any $\epsilon,\zeta>0$ and any $t \in \mathbb{N}$, there
exists $\delta>0$ and $n_0 \in \mathbb{N}$ such that the following holds. Let $n \geq n_0$, let $\zeta n \leq k \leq (\tfrac{1}{2}-\zeta)n$,
and let $\a\subset \binom{\left[n\right]}{k}$ be a family that
does not contain two sets whose intersection is of size $t-1$. If $\left|\a\right|\ge f\left(n,k,t\right)-\delta\binom{n}{k}$,
then there exists a Frankl family $\f \subset \binom{[n]}{k}$ such that $\mu\left(\a\backslash\f\right)<\epsilon$.
Moreover, $\f$ can be taken to be a $j$-junta, where $j = j(t,\zeta) \in \mathbb{N}$.
\end{theorem}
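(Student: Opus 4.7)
The plan is to derive Theorem~\ref{thm:stability-es} by combining two results already established in the paper: the junta approximation theorem for forbidden-intersection families (Theorem~\ref{thm:junta-approx}) and the sharp stability version of the Ahlswede--Khachatrian theorem (Theorem~\ref{thm:main-ak-stability}). Roughly, Theorem~\ref{thm:junta-approx} shows that $\mathcal{A}$ is approximately contained in some $t$-intersecting junta $\mathcal{J}$. This forces $|\mathcal{J}|$ to be close to $f(n,k,t)$; Theorem~\ref{thm:main-ak-stability} then implies that $\mathcal{J}$, viewed as a $t$-intersecting family, is close in symmetric difference to a Frankl family $\mathcal{F}_{n,k,t,r}$ with $r$ bounded in terms of $t$ and $\zeta$; and $\mathcal{A}$ inherits proximity to $\mathcal{F}_{n,k,t,r}$ by the triangle inequality.

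More concretely, I would choose the parameters in the order $d, \epsilon', \delta$. Let $C = C(t,\zeta)$ be the constant from Theorem~\ref{thm:main-ak-stability}. Since $\zeta n \leq k \leq (1/2-\zeta)n$, the ratio $\binom{n-d}{k-d}/\binom{n}{k} \leq (k/n)^d \leq (1/2)^d$ decays exponentially in $d$, so I first fix a constant $d = d(t,\zeta,\epsilon)$ large enough that $C\binom{n-d}{k-d} < (\epsilon/2)\binom{n}{k}$ for all $n \geq n_0$. On the other hand, the ratio $\binom{n-d}{k}/\binom{n}{k} \geq ((n-k-d)/n)^d$ is bounded below by a positive constant depending only on $\zeta$ and $d$ (for $n$ large enough), so I then pick $\epsilon' < \epsilon/2$ and $\delta > 0$ small enough that
\[
(\delta + \epsilon')\binom{n}{k} < \tfrac{1}{C}\binom{n-d}{k}.
\]

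With these parameters fixed, the argument proceeds in three short steps. \emph{Step 1}: apply Theorem~\ref{thm:junta-approx} with parameter $\epsilon'$ to obtain a $t$-intersecting $j_0$-junta $\mathcal{J} \subset \binom{[n]}{k}$ with $|\mathcal{A} \setminus \mathcal{J}| < \epsilon'\binom{n}{k}$, where $j_0 = j_0(t,\zeta,\epsilon')$. \emph{Step 2}: the hypothesis $|\mathcal{A}| \geq f(n,k,t) - \delta\binom{n}{k}$ yields
\[
|\mathcal{J}| \geq |\mathcal{A} \cap \mathcal{J}| \geq f(n,k,t) - (\delta + \epsilon')\binom{n}{k} > f(n,k,t) - \tfrac{1}{C}\binom{n-d}{k},
\]
so Theorem~\ref{thm:main-ak-stability}, applied to the $t$-intersecting family $\mathcal{J}$, produces $\mathcal{F} \cong \mathcal{F}_{n,k,t,r}$ with $r \leq C$ and $|\mathcal{J} \setminus \mathcal{F}| < C\binom{n-d}{k-d}$. \emph{Step 3}: by the triangle inequality,
\[
|\mathcal{A} \setminus \mathcal{F}| \leq |\mathcal{A} \setminus \mathcal{J}| + |\mathcal{J} \setminus \mathcal{F}| < \epsilon'\binom{n}{k} + C\binom{n-d}{k-d} < \epsilon\binom{n}{k}.
\]
Since $r \leq C(t,\zeta)$, the Frankl family $\mathcal{F}$ depends on only $t + 2r = O_{t,\zeta}(1)$ coordinates, giving the ``moreover'' clause with $j = j(t,\zeta)$ independent of $\epsilon$.

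The genuine mathematical content is packaged inside Theorems~\ref{thm:junta-approx} and~\ref{thm:main-ak-stability}, so the only thing to check here is that the ``loss'' $\epsilon'\binom{n}{k}$ produced by the junta approximation step can be made smaller than the ``gap'' $\tfrac{1}{C}\binom{n-d}{k}$ demanded by the sharp stability theorem; this is precisely the reason for fixing $d$ first and then $\epsilon', \delta$. I therefore do not expect any serious obstacle in executing the deduction.
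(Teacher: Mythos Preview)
Your proposal is correct and follows essentially the same approach as the paper: apply Theorem~\ref{thm:junta-approx} to approximate $\mathcal{A}$ by a $t$-intersecting junta $\mathcal{J}$, deduce that $|\mathcal{J}|$ is close to $f(n,k,t)$, and then invoke Theorem~\ref{thm:main-ak-stability} to find a nearby Frankl junta, finishing with the triangle inequality. The only difference is cosmetic: you make explicit the translation between the $\binom{n-d}{k}$ formulation of Theorem~\ref{thm:main-ak-stability} and the desired $\epsilon\binom{n}{k}$ bound, whereas the paper absorbs this into the phrase ``provided $\delta$ is sufficiently small''.
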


\begin{proof}
 Let $\delta = \delta(t,\zeta,\epsilon)>0$ and $n_0 = n_0(t,\zeta,\epsilon) \in \mathbb{N}$ to be chosen later. Let $n \geq n_0$, let $\zeta n \leq k \leq (1/2-\zeta)n$, let $\a \subset \binom{\left[n\right]}{k}$ be a family that does not contain a pair of sets with intersection of size $t-1$, and suppose that $\left|\a\right|\geq f\left(n,k,t\right)-\delta\binom{n}{k}$. By Theorem \ref{thm:junta-approx}, provided $n_0$ is sufficiently large depending on $t,\zeta,\epsilon$ and $\delta$, there exists
a $t$-intersecting family $\j \subset \pn$ such that $\mu\left(\a\backslash\j\right)<\min\left\{ \delta,\frac{\epsilon}{2}\right\}$. In particular, we have
\[
\left|\j\right|\ge\left|\a\right|-\delta\binom{n}{k}\ge f\left(n,k,t\right)-2\delta\binom{n}{k}.
\]
Provided $\delta$ is sufficiently small depending on $t,\zeta$ and $\epsilon$, Theorem \ref{thm:main-ak-stability} implies that there exists a Frankl family $\f$, which is an $O_{\zeta,t}(1)$-junta, such that $\mu\left(\j\backslash\f\right)<\frac{\epsilon}{2}$. We have
\[
\mu\left(\a\backslash\f\right)\le\mu\left(\a\backslash\j\right)+\mu\left(\j\backslash\f\right)<\epsilon,
\]
completing the proof.
\end{proof}

\subsection{The Frankl families are locally extremal}

In the previous subsection, we showed that if $\a\subset \binom{\left[n\right]}{k}$
is a family that does not contain two sets whose intersection is
of size $t-1$, and if $\left|\a\right|$ is close to $f\left(n,k,t\right)$,
then $\a$ has small symmetric difference with a Frankl junta. In this subsection, we show that such a family $\A$ is no larger than a Frankl junta. This will complete the proof of Theorem \ref{thm:main-result-es}. The following lemma will be a key tool.

\begin{lemma}
\label{lem:max-juntas}
For any $\zeta >0$ and any $j,t \in \mathbb{N}$, there
exist $\epsilon_{0} = \epsilon_0(t,\zeta,j) >0$ and $n_{0} = n_0(t,\zeta,j) \in \mathbb{N}$ such that the following holds. Let $n\ge n_{0}$, and let $\zeta n \leq k \leq \left(\frac{1}{2}-\zeta\right)n$.
Let $\f\subset \binom{\left[n\right]}{k}$ be a family not containing a pair of sets whose intersection is of size $t-1$. Let $J \in \binom{[n]}{j}$, and let $\g\subset \p\left(J\right)$ be a maximal
$t$-intersecting family. Suppose that $\mu\left(\f_{J}^{B}\right)>1-\epsilon_{0}$
for any $B\in\g$. Then $\mu\left(\f\right)\le\mu\left(\left\langle \g\right\rangle \right)$,
with equality only if $\f=\left\langle \g\right\rangle $.\end{lemma}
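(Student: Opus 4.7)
The plan is to prove the equivalent inequality $|\f \setminus \langle \g \rangle| \leq |\langle \g \rangle \setminus \f|$ by a slice-by-slice matching argument, then extract the equality case.

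First, I would decompose
\[
|\f \setminus \langle \g \rangle| = \sum_{B' \notin \g} |\f_J^{B'}|, \qquad |\langle \g \rangle \setminus \f| = \sum_{B \in \g} L_B,
\]
where $L_B := \binom{n-j}{k-|B|} - |\f_J^B|$. The hypothesis $\mu(\f_J^B) > 1 - \epsilon_0$ immediately gives $L_B < \epsilon_0 \binom{n-j}{k-|B|}$, so both sides are of comparable order $\epsilon_0 \binom{n}{k}$, and a careful accounting is needed.

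Second, I would exploit the maximality of $\g$. Since $\g$ is $t$-intersecting, any $B_1, B_2 \in \g$ with $B_1 \subset B_2$ satisfy $|B_1 \cap B_2| = |B_1| \geq t$, so $\g$ is an up-set in $\p(J)$ whose minimal elements have size at least $t$. Maximality means that for every $B' \notin \g$ there is some witness $B = B(B') \in \g$ with $|B \cap B'| \leq t - 1$; set $s = s(B') := t - 1 - |B(B') \cap B'| \in \{0, \ldots, t-1\}$. The forbidden-intersection condition on $\f$ then translates into the statement that no pair $(A, A') \in \f_J^{B(B')} \times \f_J^{B'}$ has $|A \cap A'| = s$.

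Third, I would reduce this to a cross-intersection problem by the standard slicing trick: for each $T \in \binom{[n]\setminus J}{s}$, the families $(\f_J^{B(B')})_T^T \subset \binom{[n]\setminus J \setminus T}{k-|B(B')|-s}$ and $(\f_J^{B'})_T^T \subset \binom{[n]\setminus J \setminus T}{k-|B'|-s}$ are genuinely cross-intersecting (a disjoint pair here would lift to a pair in $\f$ with intersection exactly $s + |B(B')\cap B'| = t-1$). I would apply Proposition \ref{lem:from David} to each such pair, with $M = M(t,\zeta,j)$ chosen large enough (and with $d_T$ determined by $|(\f_J^{B'})_T^T|$), to obtain a per-$T$ bound
\[
|(\f_J^{B(B')})_T^T| + M|(\f_J^{B'})_T^T| \leq \tbinom{n-j-s}{k-|B(B')|-s} - \tbinom{n-j-s-d_T}{k-|B(B')|-s} + M\tbinom{n-j-s-d_T}{k-|B'|-s-d_T}.
\]
Averaging over $T \in \binom{[n]\setminus J}{s}$, using the identities $\sum_T |\a_T^T| = |\a|\binom{k-|B(B')|}{s}$ and $\sum_T |\b_T^T| = |\b|\binom{k-|B'|}{s}$, yields an aggregated per-pair inequality relating $L_{B(B')}$ and $|\f_J^{B'}|$.

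Finally, I would sum these per-pair inequalities over all $B' \notin \g$, observing that each $B \in \g$ serves as a witness for at most $2^j$ sets $B' \notin \g$, and that the relevant ratios of binomial coefficients are bounded by constants depending only on $t, \zeta, j$ (since $k/n$ is bounded away from $0$ and $1/2$). For $\epsilon_0 = \epsilon_0(t,\zeta,j)$ sufficiently small, the hypothesis $\mu(\f_J^B) > 1 - \epsilon_0$ places each $|\f_J^B|$ in exactly the regime in which Proposition \ref{lem:from David} is tight, and the combined bound gives $\sum_{B'} |\f_J^{B'}| \leq \sum_B L_B$. Tracing through the equality cases in Proposition \ref{lem:from David} then forces every $L_B$ and every $|\f_J^{B'}|$ to vanish, which gives $\f = \langle \g \rangle$.

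The hard part will be managing the per-pair inequality when $s \geq 1$. In the purely cross-intersecting case ($s = 0$), the bounds from Proposition \ref{lem:from David} are sharp to within constants, but for $s \geq 1$ the slicing over $T$ loses a combinatorial factor, and different $B'$ matched to the same $B$ must be summed carefully. The smallness of $\epsilon_0$, the bounded multiplicity of the matching $B' \mapsto B(B')$, and the freedom to take $M$ large in Proposition \ref{lem:from David} are the three levers that must be balanced to close the argument.
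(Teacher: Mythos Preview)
Your approach differs substantially from the paper's, and the route you've chosen is much harder than necessary. The paper's proof is a short soft argument: set $\epsilon = \max_{A \in \g}(1 - \mu(\f_J^A))$ and $\delta = \max_{B \notin \g} \mu(\f_J^B)$. For any $B \notin \g$, pick a witness $A \in \g$ with $|A \cap B| < t$, slice down by a single set $C \subset [n]\setminus J$ of size $t-1-|A\cap B|$ chosen by averaging (just as you do), and observe that $\f_{J\cup C}^{A\cup C}$ and $\f_{J\cup C}^{B\cup C}$ are cross-intersecting. Now apply Lemma~\ref{lem:cross-intersecting} (the Kruskal--Katona consequence that $\mu(\A)>1-\epsilon$ forces $\mu(\B)=O_\zeta(\epsilon^c)$ for some $c=c(\zeta)>1$), rather than Proposition~\ref{lem:from David}. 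This gives $\delta = O_{t,\zeta,j}(\epsilon^c)$ with $c>1$. Since $\mu(\f) \le \mu(\langle\g\rangle) + 2^j\delta - \Omega_{\zeta,j}(\epsilon)$, the power gap $c>1$ finishes it immediately for small $\epsilon_0$; equality forces $\epsilon=\delta=0$.

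Your plan, by contrast, tries to match slices one-to-one via Proposition~\ref{lem:from David}, and there is a real gap. You average over $T$ with a $T$-dependent parameter $d_T$, but you have not explained how the aggregated inequality actually controls $|\f_J^{B'}|$ in terms of $L_{B(B')}$: the extra term $M\binom{n-j-s-d_T}{k-|B'|-s-d_T}$ does not obviously wash out after averaging, and Proposition~\ref{lem:from David} also carries the precondition $|\G| \le \binom{n-c}{k_2-c}$ which you would first have to verify (and verifying it already requires something like Lemma~\ref{lem:cross-intersecting}). The ``hard part'' you flag at the end is genuine, and it is precisely what the paper's power-law bound circumvents: once you know the outside density is $O(\epsilon^c)$ with $c>1$ rather than merely $O(\epsilon)$, no slice-by-slice matching is needed at all.
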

\begin{proof}
Let $\delta=\max_{B\notin\g}\left\{ \mu\left(\f_{J}^{B}\right)\right\}$,
and let $\epsilon=\max_{A\in\g}\left(1-\mu\left(\f_{J}^{A}\right)\right)$. We observe the following.
\begin{claim}
\label{Last claim}There exists $c=c\left(t,\zeta,j\right)>1$
such that $\delta=O_{t,\zeta,j}\left(\epsilon^{c}\right)$.\end{claim}
\begin{proof}
Let $B \notin \g$ such that $\mu\left(\f_{J}^{B}\right)=\delta$.
Since $\g \subset \p\left(J\right)$ is maximal $t$-intersecting, there exists
$A\in\g$ such that $\left|A\cap B\right|<t$. By averaging, there exists $C\subset \left[n\right]\backslash J$
with $|C| = t-1-\left|A\cap B\right|$ and $\mu\left(\f_{J\cup C}^{B\cup C}\right)\ge\mu\left(\f_{J}^{B}\right) = \delta$. Note that $\f_{J \cup C}^{A \cup C}$ and $\f_{J \cup C}^{B \cup C}$ are cross-intersecting, otherwise $\f$ would contain two sets whose intersection is of size $t-1$. Note also that
\[
1-\mu\left(\f_{J\cup C}^{A\cup C}\right) \leq \frac{\binom{n-j}{k-|A|}}{\binom{n-j-|C|}{k-|A|-|C|}}\left(1-\mu\left(\f_{J}^{A}\right)\right) = O_{t,\zeta,j}\left(1-\mu\left(\f_{J}^{A}\right)\right)=O_{t,\zeta,j}\left(\epsilon\right).
\]
The claim now follows by applying Lemma \ref{lem:cross-intersecting} to $\f_{J \cup C}^{A \cup C}$ and $\f_{J \cup C}^{B \cup C}$.
\end{proof}
Since $\mu(\F) = \sum_{B \subset J} \mu_{(n,k,J)}(B) \mu(\F_J^B)$, and since $\mu_{(n,k,J)}(B) = \Omega_{\zeta,j}(1)$ for all $B \subset J$ (provided $n$ is sufficiently large depending on $\zeta$ and $j$), we have
\[
\mu\left(\f\right) \leq \mu\left(\left\langle \g\right\rangle \right)+\delta-\Omega_{t,\zeta,j}\left(\epsilon\right).
\]
Therefore, by Claim \ref{Last claim}, we have
\[
\mu\left(\f\right)\le\mu\left(\left\langle \g\right\rangle \right)+O_{t,\zeta,j}\left(\epsilon^{c}\right)-\Omega_{t,\zeta,j}\left(\epsilon\right)
\]
 for some $c>1$. Provided $\epsilon_{0}$ is sufficiently small (depending on $t,\zeta$ and $j$),
this implies that either $\mu\left(\f\right) < \mu\left(\left\langle \g\right\rangle \right)$
or $\f=\left\langle \g\right\rangle$, proving the lemma.
\end{proof}

We may now prove Theorem \ref{thm:main-result-es}.
\begin{proof}[Proof of Theorem \ref{thm:main-result-es}.]
 Given $\zeta >0$ and $t \in \mathbb{N}$, we choose $j=j\left(t,\zeta\right) \in \mathbb{N}$ as in Theorem
\ref{thm:stability-es}, we choose $\epsilon_{0}=\epsilon_{0}\left(t,\zeta,j\right)>0$ as in Lemma \ref{lem:max-juntas}, and we let $\epsilon_{1}=\epsilon_{1}\left(\epsilon_{0},\zeta,j\right)>0$ and $n_0 = n_0(t,\zeta) \in \mathbb{N}$ to be chosen later.

Let $n \geq n_0$, let $\zeta n \leq k \leq (1/2-\zeta)n$, let $\a\subset \binom{\left[n\right]}{k}$ be a family that does not
contain two sets whose intersection is of size $t-1$, and suppose
that $\left|\a\right|\ge f\left(n,k,t\right)$. We will show that $\a$ is a Frankl family. By Theorem \ref{thm:stability-es}, provided $n_0$ is sufficiently large depending on $t$, $\zeta$ and $\epsilon_1$,
there exists a Frankl family $\f$ such that $\f$ is a $j$-junta, and
\[
\mu\left(\a\backslash\f\right)<\epsilon_{1}.
\]

Let $J \in \binom{[n]}{j}$ and let $\g \subset \p(J)$ such that $\f = \langle \g \rangle$. For any $B\in\g$, we have
\[
\mu_{(n,k,J)}\left(B\right)\left(1-\mu\left(\a_{J}^{B}\right)\right)\le \mu(\f \backslash \a) \le \mu\left(\a\backslash\f\right)<\epsilon_{1}.
\]
Provided $n$ is sufficiently large depending on $\zeta$ and $j$, we have $\mu_{(n,k,J)}(B) = \Omega_{\zeta,j}(1)$ for all $B \subset J$. Hence, provided $\epsilon_{1}$ is sufficiently small depending on $\epsilon_0$, $\zeta$ and $j$, we have $\mu\left(\a_{J}^{B}\right) > 1-\epsilon_{0}$ for all $B \in \G$. Therefore, by Lemma \ref{lem:max-juntas}, provided $n_0$ is sufficiently large depending on $t$, $\zeta$ and $j$, we have $\mu(\a) \leq \mu(\f)$, with equality only if $\a=\f$, proving the theorem.
\end{proof}

\section{Conclusion and open problems}
\label{sec:conc}
For fixed $t \in \mathbb{N}$, the results in this paper, combined with the previous results mentioned in the Introduction, resolve the Erd\H{o}s-S\'os problem (i.e., Problem \ref{prob:es}) for $2t \leq k \leq (1/2 - o(1))n$. However, the problem remains unsolved for $k/n$ very close to $1/2$. We believe that new techniques will be required to tackle the case where $k/n \approx 1/2$.

It would also be interesting to determine the optimal dependence of $j = j(\zeta,\delta,h,\epsilon)$ on $\zeta,\delta,h$ and $\epsilon$, in Theorem \ref{thm:weak-reg}. As stated above, our proof gives $j \leq 2 \uparrow \uparrow 1/(\zeta^{O(h)} \delta^2 \epsilon)$.

\subsubsection*{Acknowledgements}

We are grateful to Gil Kalai for several helpful discussions, and to Yuval Filmus for suggesting a more elegant way of presenting the proof of Theorem~\ref{thm:biased-ak-stability}, one which we have adopted.

\end{document}